\newcommand{\nwc}{\newcommand}
\nwc{\mf}{\mathbf} 
\nwc{\blds}{\boldsymbol} 
\nwc{\ml}{\mathcal} 
\newcommand{\mB}{\mathrm{B}} 
\newcommand{\C}{\mathbb{C}} 
\newcommand{\F}{\mathcal{F}} 
\newcommand{\gH}{\mathfrak{H}} 
\newcommand{\N}{\mathbb{N}}
\newcommand{\R}{\mathbb{R}}
\newcommand{\Z}{\mathbb{Z}}  
\newcommand{\gm}{\mathfrak{m}}
\renewcommand{\d}[1]{\mathrm{d}#1}
\newcommand{\abs}[1]{\left\lvert #1 \right\rvert}
\newcommand{\norm}[1]{\left\lVert #1 \right\rVert}
\newcommand{\cc}[1]{\overline{#1}}
\newcommand{\hp}[2]{\left\langle #1 ,#2\right\rangle}
\newcommand{\br}[1]{\left \langle #1 \right \rangle} 
\newcommand{\ol}[1]{\overline{#1}}
\newcommand{\p}{\partial}
\newcommand{\cp}{\overline{\partial}}
\newcommand{\To}{\longrightarrow} 
\newcommand{\rad}{\mathrm{rad}}
\newcommand{\loc}{\mathrm{loc}}
\newcommand{\qe}{q^{\mathrm{B}}} 
\newcommand{\gb}{\gamma^{\mathrm{B}}}
\DeclareMathOperator{\Id}{\mathrm{Id}}
\DeclareMathOperator{\Div}{\mathrm{div}}
\DeclareMathOperator{\Spec}{\mathrm{Spec}}
\newcommand{\ka}{\kappa}
\newcommand{\la}{\lambda} 
\newcommand{\La}{\Lambda} 
\newcommand{\ga}{\gamma} 
\newcommand{\Om}{\Omega}
\newcommand{\si}{\sigma}
\newcommand{\eps}{\varepsilon}
\nwc{\IA}{\mathbb{A}} 
\nwc{\IB}{\mathbb{B}} 
\nwc{\IC}{\mathbb{C}} 
\nwc{\ID}{\mathbb{D}} 
\nwc{\IE}{\mathbb{E}} 
\nwc{\IF}{\mathbb{F}} 
\nwc{\IG}{\mathbb{G}} 
\nwc{\IH}{\mathbb{H}} 
\nwc{\IN}{\mathbb{N}} 
\nwc{\IP}{\mathbb{P}} 
\nwc{\IQ}{\mathbb{Q}} 
\nwc{\IR}{\mathbb{R}} 
\nwc{\IS}{\mathbb{S}} 
\nwc{\IT}{\mathbb{T}} 
\nwc{\IZ}{\mathbb{Z}} 
\nwc{\cA}{\ml{A}}
\nwc{\cB}{\ml{B}}
\nwc{\cC}{\ml{C}}
\nwc{\cD}{\ml{D}}
\nwc{\cE}{\ml{E}}
\nwc{\cF}{\ml{F}}
\nwc{\cG}{\ml{G}}
\nwc{\cH}{\ml{H}}
\nwc{\cI}{\ml{I}}
\nwc{\cJ}{\ml{J}}
\nwc{\cK}{\ml{K}}
\nwc{\cL}{\ml{L}}
\nwc{\cM}{\ml{M}}
\nwc{\cN}{\ml{N}}
\nwc{\cO}{\ml{O}}
\nwc{\cP}{\ml{P}}
\nwc{\cQ}{\ml{Q}}
\nwc{\cR}{\ml{R}}
\nwc{\cS}{\ml{S}}
\nwc{\cT}{\ml{T}}
\nwc{\cU}{\ml{U}}
\nwc{\cV}{\ml{V}}
\nwc{\cW}{\ml{W}}
\nwc{\cX}{\ml{X}}
\nwc{\cY}{\ml{Y}}
\nwc{\cZ}{\ml{Z}}
\newtheorem{main-theorem}{Theorem}
\newtheorem{proposition}{Proposition}[section]
\newtheorem{lemma}[proposition]{Lemma}
\newtheorem{theorem}[proposition]{Theorem}
\theoremstyle{remark}
\newtheorem{remark}[proposition]{Remark}
\numberwithin{equation}{section}
\begin{document}
\title[Characterization of DtN maps via the Born approximation]{Characterization of Dirichlet-to-Neumann maps via the Born approximation}

\author[C. Castro]{Carlos Castro}
\address[CC]{M$^2$ASAI. Universidad Politécnica de Madrid, ETSI Caminos, c. Profesor Aranguren s/n, 28040, Madrid, Spain.}
\email{carlos.castro@upm.es}

\author[F. Macià]{Fabricio Macià}
\address[FM]{M$^2$ASAI. Universidad Politécnica de Madrid, ETSI Navales, Avda. de la Memoria, 4, 28040, Madrid, Spain.}
\email{fabricio.macia@upm.es}

\author[C. Meroño]{Cristóbal Meroño}
\address[CM]{M$^2$ASAI. Universidad Politécnica de Madrid, ETSI Navales, Avda. de la Memoria, 4, 28040, Madrid, Spain.}
\email{cj.merono@upm.es}

\author[D. Sánchez-Mendoza]{Daniel Sánchez-Mendoza}
\address[DSM]{M$^2$ASAI. Universidad Politécnica de Madrid, ETSI Navales, Avda. de la Memoria, 4, 28040, Madrid, Spain.}
\email{daniel.sanchezmen@upm.es}


\begin{abstract}
    The problem of identifying the set of Dirichlet-to-Neumann (DtN) maps arising from conductivities on a smooth domain, among operators acting on functions on the boundary, is a challenging issue in the mathematical analysis of the Calderón inverse problem. This question is also relevant in specific applications since, as the inverse problem is ill-posed, numerically reconstructing a conductivity from the knowledge of its DtN map is particularly delicate. In this article, we address this issue by proving that any DtN map arising from a radial conductivity in the unit ball of  $\R^d$ admits an exact representation as a linearized DtN map for a uniquely determined integrable function, that we call the Born approximation. This gives a strong necessary condition for an operator to be a DtN map arising from a radial conductivity. 
    In particular, our results are a starting point towards developing a rigorous foundation for numerous linearization-based methods that are commonly used in the numerical solution of the Calderón inverse problem.
    We also characterize the Born approximation as a solution to a generalized moment problem that is formally well-defined even for non-radial conductivities. We investigate the uniqueness and structure of general non-radial solutions to this moment problem on the unit disk and provide an algorithm to numerically reconstruct the Born approximation in this setting. 
    We provide numerical experiments to test the resolution and robustness of the Born approximation in different situations. 
\end{abstract}

\maketitle

\section{Introduction}

\subsection{Setting and results}

Calderón's inverse problem asks whether it is possible to determine the electrical conductivity of a medium by making voltage and current measurements on its boundary. Its mathematical formulation involves the conductivity equation, which is the elliptic PDE on a domain $\Omega$ in Euclidean space with conductivity $\ga$ (a positive function defined on $\Omega$) that allows recovering the electrostatic potential $u^\ga_f$ on $\Omega$ from the voltage $f$ applied to its boundary $\p\Omega$. This equation is
\begin{equation}\label{e:conduct}
    \left\{\begin{array}{rl}
        -\nabla\cdot(\gamma\nabla u_f^\gamma)=0,  &  \text{ in }\Omega,\\
        u_f^\gamma= f, &   \text{ on }\partial\Omega.
    \end{array}\right.
\end{equation}
The Dirichlet-to-Neumann map (DtN) returns the current through $\p\Omega$ corresponding to a certain voltage $f$. This is the linear operator defined on functions on $\p\Omega$ by
\begin{equation}\label{e:DtN}
    \Lambda_\gamma f := (\gamma \partial_\nu u_f^\gamma) |_{\partial\Omega},
\end{equation}   
whenever $u_f^\gamma$ solves \eqref{e:conduct}. 
The Calderón problem can be stated precisely as the problem of reconstructing the conductivity $\ga$ from the DtN map $\Lambda_\ga$ or, equivalently,
as the inversion of the nonlinear map $\Phi$ defined as
\begin{equation}\label{e:defphi}
    \Phi(\ga):= \Lambda_\ga,
\end{equation}
where $\ga$ varies in some class of admissible conductivities. The mathematical analysis of this inverse problem has a long and rich history; results showing that $\gamma$ can be uniquely reconstructed from $\Lambda_\ga$ (\textit{i.e.}, that $\Phi$ is injective) date back to the seminal works by Kohn and Vogelius \cite{KoVo84}, Sylvester and Uhlmann \cite{SU86, SU87}, and Nachman \cite{Na88}. Stability, another important aspect of the inverse problem, has also been widely studied. $\Phi^{-1}$ is known to be discontinuous for any natural topology on the set of DtN maps (the inverse problem is \textit{ill-posed}) although numerous \textit{conditional stability} results are known to hold. We refer the reader to the recent monograph \cite{FSUBook} for a detailed description of these results as well as a thorough guide to the literature. 

Much less is known about the problem of characterizing the range of $\Phi$, that is, the set of all DtN maps arising from conductivities (some partial results for very specific classes of two-dimensional conductivities can be found in \cite{Ingerman, Sha11}); because of ill-posedness, this question is especially relevant to the reconstruction aspect of the Calderón problem: namely, that of finding effective procedures to construct $\gamma$ from $\Lambda_\gamma$. Reconstruction has been thoroughly investigated, partly due to its connection to a number of imaging techniques, such as Electrical Impedance Tomography (EIT).

The results in this article aim to contribute to the understanding of the range of $\Phi$, and as a byproduct, to provide a rigorous background for some widely used reconstruction methods in EIT based on linearization. A short review on these methods, and a description of their connection with the results in this article is presented in \Cref{s:survnumerico}. 

Linearization methods are based on the inversion of $\d \Phi_\sigma$, the Fréchet differential of $\Phi$ at some reference conductivity $\sigma$. This is known as the linearized Calderón problem. Since the work of Calderón \cite{calderon}, understanding this linearized version of the inverse problem has been a cornerstone in the strategy for solving the full nonlinear problem from both theoretical and numerical viewpoints. One tries to determine a function $\gb_\sigma$ that solves
\begin{equation}\label{e:defbor0}
    \d \Phi_\sigma(\gb_\sigma - \sigma) =\Lambda_\ga - \Lambda_\sigma,
\end{equation}
where $\ga$ is the conductivity one desires to approximate with respect to a background conductivity $\sigma$ (or a sufficiently close prior); see \cite{HaSe2010} for instance. 
This is merely formal, since it is not clear that such a function exists.
Motivated by the linear approximation in inverse scattering theory developed by Born in the 1920s, following \cite{Barcelo2022, BCMM23_n}, we call $\gb_\si$ the Born approximation of $\ga$ with respect to $\si$.

Though widely used in EIT, there are very few rigorous results on the existence of $\gb_\si$. Unlike in inverse scattering problems, the question of existence of the Born approximation in the Calderón problem is a subtle issue. If one were able to establish it, this would imply that every DtN map could be written as a linearized DtN operator, and therefore that the range of $\Phi$ is contained in that of $\d \Phi_\sigma$. To the best of our knowledge, the existence of $\gb_\si$ has been rigorously established only when $\si = 1$, $\Om = \IB^d\subset\IR^d$ is the unit ball, and $\ga \in W^{2,p}(\IB^d)$ is radial; see \cite[Theorem~1]{radial_cond}\footnote{\cite[Theorem~1]{radial_cond} proves the existence of the Born approximation as the solution of a certain moment problem involving the spectrum of the DtN map. The connection with the Fréchet differential of $\Phi$ is presented in identity \eqref{e:moments_ka0} in the proof of \Cref{t:existencegb}.} (see also \cite{Radial_Born,MMS-M25} for rigorous results on the existence of the Born approximation for DtN maps associated with Schrödinger operators).

Furthermore, if  $\gb_\sigma$ exists, then the strong injectivity properties of $\d\Phi_\si$ imply that one has a two-step factorization of the inverse problem: a linear step that asks for reconstructing the Born approximation $\gb_\si$ from the DtN map, and a non-linear one that consists of the inversion of the map $\Phi_\si^\mB$ that sends the function $\ga$ to the function $\gb_\si$.  More precisely, using that $\La_\si = \d\Phi_\si(\si)$ (see \Cref{prop:Frech}), \eqref{e:defbor0} becomes
\begin{equation}\label{e:defborn_2}
    \d \Phi_\sigma(\gb_\sigma) =\Lambda_\ga ,
\end{equation}
therefore, inducing the factorization of $\Phi$ shown in the diagram:
\begin{equation}  \label{e:diagram_cond}
\begin{tikzcd}
\gamma \; \ar[dr,|->,"\Phi_\si^\mB" ] \ar[rr,|->,"\Phi"] && \La_\gamma \\
 &  \gb_\si  \ar[ur,|->,"\d\Phi_\si"]
\end{tikzcd}
\end{equation}
This type of factorization can be formulated in a general context and has been applied under different names to solve a number of ill-posed inverse problems (see the survey \cite{MaMe24}). 
In \cite{radial_cond} it is shown that when $\ga \in W^{2,p}(\IB^d)$ is a radial conductivity,  $\gb_1$ exists and uniquely determines $\ga$.
Moreover, in this case, the map $(\Phi_1^\mB)^{-1}: \gb_1 \longmapsto \ga$ is Hölder continuous under suitable \textit{a priori} assumptions on the conductivities (that unlike most conditional stability results, do not imply any compactness on the admissible set of conductivities). 
In other words, the aforementioned fact that $\Phi^{-1}:\La_\ga \longmapsto \ga$ is discontinuous is due to the linear part of the factorization: the inversion of $\d\Phi_\si$.
We say that the inverse problem admits a \textit{stable factorization} if $\Phi$ can be factorized into a linear ill-posed map and a non-linear stable map. In addition to the results in  \cite{radial_cond} for the conductivity case, this property has also been proven in \cite{Radial_Born} for the radial Schrödinger case and an associated inverse spectral problem, and locally in \cite{MMS-M25} for the radial fixed energy Schrödinger case.

In this article, we present a systematic approach to establishing the existence of the Born approximation based on the well-known reduction to a problem involving Schrödinger operators; as a consequence of our analysis, we broaden the class of conductivities $\si$ and $\ga$ for which \eqref{e:defbor0} can be rigorously solved. More specifically, we show that $\gb_\sigma$ exists as an integrable function as soon as $\Omega = \IB^d$ for $d\ge 2$, $\ga$ is a $W^{2,\infty}$ radial conductivity, and $\sigma$ is
\begin{equation} \label{e:si_ga}
    \sigma_{\ka,d} (x):= \left(c_d\frac{J_{\nu_d}(\sqrt{\ka} |x|)}{(\sqrt{\ka}|x|)^{\nu_d}}\right)^2,\qquad \nu_d:=\frac{d-2}{2},\quad c_d:= \Gamma(\nu_d+1)2^{\nu_d},
\end{equation} 
for $\ka\in (-\infty, \lambda_{\nu_d,1}^2)$, where $\lambda_{\nu_d,1}$ stands for the first positive zero of the Bessel function $J_{\nu_d}$.\footnote{When $\ka<0$, note that $\sigma_{\ka,d}(x)=\left(c_d\frac{I_{\nu_d}(\sqrt{|\ka|} |x|)}{(\sqrt{|\ka|}|x|)^{\nu_d}}\right)^2$. When $\ka = 0$, the formula should be understood in the limit sense, so that $\sigma_{0,d}=1$.}

We denote by $\cE'(\Om)$ the space of distributions with compact support contained in $\Om$. It is shown in \Cref{r:extdiff} that, for $\si \in \cC^\infty(\ol{\Om})$,  one can extend  $\d \Phi_{\sigma}$ from its natural domain $L^\infty(\Omega,\IR)$ to $L^\infty(\Om)+\cE'(\Om)$.
\begin{main-theorem}\label{t:existencegb}
    Suppose that $\Omega:=\IB^d$ is the unit ball in $\IR^d$, that $\gamma\in W^{2,\infty}(\IB^d)$ is a radial conductivity, and that $\ka\in(-\infty,\lambda_{\nu_d,1}^2)$. 
    Then there exists a distribution $\gb_{\sigma_{\ka,d}}\in W^{2,\infty}(\IB^d)+\cE'(\IB^d) $ such that:
    \begin{equation} \label{e:mainthm}
        \Lambda_\ga = \d \Phi_{\sigma_{\ka,d}}(\gb_{\sigma_{\ka,d}}).
    \end{equation}
    In addition, the following statements hold:
    \begin{enumerate}[i)]
        \item $\gb_{\sigma_{\ka,d}}$ is uniquely determined in $W^{2,\infty}(\IB^d)+\cE'(\IB^d) $ by \eqref{e:mainthm}.
        \item  $\gb_{\sigma_{\ka,d}}$ is radial and real,  $\gb_{\sigma_{\ka,d}}\in W^{1,1}(\IB^d)$ and $\gb_{\sigma_{\ka,d}} \in \cC^1(\overline{\IB^d}\setminus \{0\})$.
        \item $\gb_{\sigma_{\ka,d}}  |_{\partial\IB^d} = \ga|_{\partial\IB^d}$, and $\partial_\nu\gb_{\sigma_{\ka,d}}  |_{\partial\IB^d}=\partial_\nu\ga|_{\partial\IB^d}$.
    \end{enumerate}
\end{main-theorem}
Identity \eqref{e:mainthm} in the theorem implies the following representation formula for $\Lambda_\ga$:
    \begin{equation*}
         \hp{f}{\Lambda_\ga g}_{H^{1/2}\times H^{-1/2}} =  \int_{\IB^d} \gb_{\sigma_{\ka,d}}(x)  \nabla u^{\sigma_{\ka,d}}_f(x) \cdot \nabla u^{\sigma_{\ka,d}}_g(x)\, \d x,  \qquad \forall f,g \in H^{1/2}(\IS^{d-1}). 
    \end{equation*}

 \Cref{t:existencegb} generalizes to $\ka \neq 0$ some of the results in \cite{radial_cond}. The uniqueness statement actually holds in a slightly more general setting, see Theorems \ref{thm:Born_d=2_uniq_0_Om} and \ref{thm:Born_d=2_uniq_Om}. 

A second set of results aims to understand the validity of \Cref{t:existencegb} for non-radial conductivities. We first observe that the Born approximation can be characterized as a solution to a certain \textit{complex moment problem} that is described as follows. Let  $(e_m)_{m\in \N}$ be an orthonormal basis of $L^2(\p \Om)$ such that $e_m\in H^{1/2}(\p \Om)$ for all $m\in \N$.  By direct computation of the Fréchet differential $\d\Phi_\si$ (see \Cref{prop:Frech}), one can prove  that for all $F\in L^\infty(\Om)$, and $\ell,m \in \N$
\begin{equation*}
    \gm_{\ell,m}^\si[F] := \br{\cc{e_{\ell}}, \d\Phi_\si(F) e_m}_{H^{1/2}\times H^{-1/2}} = \int_{\Om} F(x) \ol{\nabla u_\ell^\si} \cdot \nabla u_m^\si \, \d x, 
\end{equation*}
where $u_m^\si$ is the solution of \eqref{e:conduct} with $f= e_m$.
By \eqref{e:defborn_2}, it follows that $\gb_\si$ solves the generalized moment problem
\begin{equation}\label{e:gen_moment_prob}
    \gm_{\ell,m}^{\si}[\gb_\si] =  \int_{\Om} \gb_\si \, \ol{\nabla u_\ell^\si} \cdot \nabla u_m^\si \, \d x =  \hp{\cc{e_{\ell}}}{\Lambda_{\ga} e_m}_{H^{1/2}\times H^{-1/2}}, \qquad \ell,m\in\N.
\end{equation}
These remarks show that when $\Omega= \ID$ is the unit disk in $\IR^2$ and $\sigma=1$, the conclusion of \Cref{t:existencegb} may fail for non-radial $\gamma$. In fact, letting $e_m(\theta):=(2\pi)^{-1/2}e^{i m\theta}$ be the standard basis of $L^2(\IS^1)$ of spherical harmonics, we show (see \Cref{r:freemoments}) that for $F\in L^1(\ID)$,
\begin{equation*}
    \gm_{\ell,m}^{1}[F]=0,\qquad \forall \ell,m\in\IZ,\; \ell m < 0. 
\end{equation*}
As a consequence, $\d\Phi_1(F)$ maps $H^{1}(\IS^1)\cap \cH_+$ into $\cH_+$, where
\[
\cH_+:=\{u\in L^2(\IS^1)\,:\,\hp{e_{\ell}}{u}_{L^2(\IS^1)}=0,\;\forall \ell<0\}.
\]
If the Born approximation $\gb_1$ were to exist, this would imply that $\Lambda_\ga$ also maps $H^{1}(\IS^1)\cap \cH_+$ into $\cH_+$, which is not necessarily the case when $\ga$ is non-radial. This can be checked in explicit examples: if $\eps>0$ and $\ga_\eps(x,y):=1+\eps(1-x^2-y^2)(x^2-y^2)$, with $(x,y)\in\ID$, direct computation shows that:
\begin{equation*}
    \hp{\cc{e_{-3}}}{\Lambda_{\ga_\eps} e_1}_{H^{1/2}\times H^{-1/2}}=-\frac{\eps^2}{320}+\cO(\eps^3).
\end{equation*}
Therefore, in this setting, it makes sense \textit{a priori} to define the Born approximation $\gb_1$ as a function/distribution satisfying:
\begin{equation*}
    \d\Phi_1(\gb_1)=\Pi_+\Lambda_\gamma \Pi_+|_{H^{1}(\IS^1)},
\end{equation*}
where $\Pi_+:L^2(\IS^1)\To\cH_+$ denotes the orthogonal projection. A characterization for $\gb_1$ based on this \textit{Ansatz}, which turns out to be equivalent to solving the moment problem \eqref{e:gen_moment_prob} with respect to the basis of spherical harmonics, is given in \Cref{s:sec_disk}. More generally, the structure of $\gb_{\si}$ for $\si=\sigma_{\ka,2}$ on $\ID$ with $\ka<\lambda_{0,1}^2$ is also investigated through the moment problem \eqref{e:gen_moment_prob} (again with respect to the basis $(e_m)_{m\in \N}$ of spherical harmonics in $\IS^1$). In particular, \eqref{e:gen_moment_prob} is shown to possess at most one solution in this case.    

We also numerically explore the structure and properties of the Born approximation as defined by \eqref{e:gen_moment_prob} through several examples of non-radial $\ga$, for $\si = \si_{\ka,2}$ on the unit disk in $\R^2$. 
These experiments are presented in \Cref{s:numerical}. This choice of $\si$ has the advantage of providing good \textit{a priori} knowledge of the corresponding solutions $u_m^\si$ and encompasses the framework in which we have rigorously proved the existence of the Born approximation. 

First, we study the ability of the Born approximation $\gb_1$ to recover different bump perturbations of the conductivity $\si = 1$ (which corresponds to $\ka=0$). 
Second, we analyze how the resolution deteriorates with depth. Third, we consider conductivities that are perturbations of $\si_{\ka,2}$ and analyze the difference between the Born approximations $\gb_{\si_{\ka,2}}$ and $\gb_{\si_{0,2}}$. 
This experiment illustrates the significant advantage of using a good prior for $\sigma$. In fact, we show that the correct perturbation can be recovered from $\gb_{\si_{\ka,2}}$ even in the presence of screening effects (low conductivity at the boundary), while $\gb_{\si_{0,2}}$ is completely useless for this task (see \cref{Fig:kappa+}). 
Finally, we study how the method deals with discontinuous conductivities and noisy data in experiments 4 and 5, respectively.

\subsection{The role of \texorpdfstring{$\gb_\sigma$}{the Born approximation} in linearization methods for EIT}\label{s:survnumerico}

Linearization methods have been among the main approaches to EIT since its early developments in the 1980s and 1990s. A common way to formulate the inverse problem is to characterize the conductivity as the solution of a least-squares minimization problem, for which gradient-based or even Newton-type methods can be applied. 
The main difficulty is to numerically solve  the linearized problem \eqref{e:defbor0}. 
Explicit expressions can be readily derived when linearizing around the constant conductivity $\si = 1$. This strategy has led to successful reconstruction methods, such as the NOSER algorithm, which effectively corresponds to performing a single Newton step starting at $\si=1$; see, for example, \cite{10.1002/ima.1850020203,Stephenson2009,HaSe2010}. 
The importance of understanding how the solution of \eqref{e:defbor0} differs from the conductivity  has been identified as a crucial question for EIT in \cite{HaSe2010}, where the authors show that assuming $\gb_\si$ exists,  then $\gb_\si-\si$ has the same outer support as $\ga-\si$. The fact that this is done for the Neumann to Dirichlet map instead of the DtN map is not a fundamental difference; see \Cref{r:neumann}.

The Born approximation also appears in the context of the D-bar method in EIT (see \cite{MuellerSiltanenBook, Mueller_Siltanen_20_D_bar_demistified}), under different forms. 
In particular,  \cite{BKM11,KnudsenMueller11} introduces, among others, the formal objects $q_{exp}$, $\ga_{exp}$, and $\ga_{ap}$ obtained from  $t_{exp}$:
a linearization of the scattering transform used in the D-bar method.  
In \cite{Barcelo2022,MaMe24}, it is shown that $q_{exp}$ is just a solution of the Schrödinger version of \eqref{e:defbor0} (given in \eqref{e:born_pot_def} below when $q_0 = 0$), and hence, it coincides with the Born approximation of the potential $q$, denoted in this work by $q^{\mB}_0$. 
From  \Cref{s:2_2}, it follows that the approximation $\gamma_{ap}$, obtained in \cite{BKM11,KnudsenMueller11}  from $q_{exp}$ by linearizing the map  $q = \Delta \sqrt{\ga}/\sqrt{\ga}$, is a formal solution of  \eqref{e:defbor0}, and hence it coincides with the Born approximation $\gb_1$ defined by \eqref{e:defbor0} (this is especially simple in the radial case; see \cite{BCMM23_n}). 
    
Under the name of \textit{Calderón method}, the works \cite{Bik_Mueller_08, KnudsenMueller11, BKM11} consider the approximation of $\gamma$ using $\gb_1$. 
This method is also  used in the anisotropic case in \cite{MR4186178}, with second order corrections in \cite{Mueller_second_order},
and when $d=3$ in \cite{DHK12, DelKn14}, together with full reconstruction algorithms.  The effectiveness of the approximation $\gb_1$ has been   compared with other methods  in \cite{HIKMTB21}  using synthetic data to simulate real discrete data from electrodes.
    
The idea that a suitable linearization of the Calderón problem can lead to a factorization into a linear unstable part and a non-linear stable part has also been implicitly used in other numerical approaches to EIT; however, as far as we know, the only rigorous results are in \cite{Radial_Born, MMS-M25, radial_cond}. 
This kind of factorization has been observed, for example, in \cite{2408.12992}, where the approach in \cite{GLSSU2018} is used to reconstruct $\ga$ when $d=2$.   
This is done in two steps; one linear, which involves inverting a certain Fréchet differential in the same spirit as \eqref{e:defbor0}, and one non-linear, which is observed to be  more numerically stable.

The Born approximation and the stable factorization property have also been successfully used in combination with Neural Network (NN) approaches \cite{Martin2017,Cen_Jin_etal_deeep_calderon_2023}. 
In both papers, reconstruction is divided into two steps: first,  EIT images are generated using the Born approximation (under the name of the Gauss-Newton method in \cite{Martin2017} and the Calderón method in \cite{Cen_Jin_etal_deeep_calderon_2023}), and then the non-linear step $(\Phi_\sigma^{\mB})^{-1}: \gb_\si \longmapsto \ga $ is approximated with a NN. 
In \cite{Martin2017}, the aim of the NN is to reduce the regularization error, and in \cite{Cen_Jin_etal_deeep_calderon_2023}, to enhance the resolution  of the first step (which leads to blurry images). After the previous discussion, one might suppose that the NN also benefits from the improved stability of  the non-linear step.  
See, for instance, \cite{MR4173583,Hamilton2019,8352045,Chow_2014,doi:10.1137/20M1367350,2511.20361} for other approaches to EIT and Calderón using NNs.

\subsection{Structure of the article} 
\Cref{s:born} analyzes the linearized Calderón problem and the problem of constructing the Born approximation. In particular, we prove a series of uniqueness results, Theorems \ref{thm:Born_d=2_uniq_0_Om} and \ref{thm:Born_d=2_uniq_Om},  and we prove  \Cref{t:existencegb}. In \Cref{s:sec_disk} we consider the particular case of the disk in two-dimensional Euclidean space and analyze the connection between the Born approximation and the solution to certain complex moment problems. In particular, we derive explicit expressions for the matrix elements of the Fréchet differential $\d \Phi_{\si_\ka}(\ga)$ in the spherical harmonics, which will be useful for the numerical reconstruction of the Born approximation in \Cref{s:numerical}. Also, this analysis  leads to a representation result of the Fourier transform of functions supported on the disk, \Cref{thm:Fourier d=2}, which is of independent interest. Finally, \Cref{s:numerical} describes a computational strategy for solving the moment problem presented in \Cref{s:sec_disk}, along with several numerical experiments in model situations.

\subsection*{Acknowledgments}

The authors have been supported by grants PID2021-124195NB-C31 and PID2024-158664NB-C21 from Agencia Estatal de Investigación (Spain).

\section{The Born approximation}\label{s:born}

We start in \Cref{s:subsec_lin_calderon} by studying the linearized Calderón Problem  for conductivities and for Schrödinger operators. We then analyze in \Cref{s:2_2} the relation between the Born approximations of these two problems. In \Cref{s:2_3_uniqueness} we address the question of uniqueness under the assumption of existence. Finally, we prove \Cref{t:existencegb} in \Cref{s:2_4}. 

\subsection{Linearized Calderón problem}\label{s:subsec_lin_calderon}\

We start by studying some properties of the Fréchet differential of the nonlinear map $\Phi$ defined in \eqref{e:defphi}. 

Let $\Omega$ be a smooth domain; a function $\ga \in L^\infty(\Omega)$ is said to be a \textit{conductivity} provided it is real valued and that there exists $c>0$ such that $\gamma \geq c$ almost everywhere. 
Under this regularity assumption on $\ga$, the operator $\Lambda_\ga$ is defined through the following integration by parts identity:
\begin{equation}\label{e:Aless}
    \hp{f}{\Lambda_\gamma g}_{H^{1/2}\times H^{-1/2}}   = \int_\Omega  \gamma(x)\nabla u_f(x)\cdot \nabla u_g^\gamma (x) \,\d x,
\end{equation}
where $u_f$ is any function such that $u_f\in H^1(\Om)$ and $u_f|_{\partial\Omega} = f$.
The Fréchet differential $\d \Phi_\sigma$ at a conductivity $\sigma\in L^\infty(\Omega)$ maps essentially bounded functions in $\Omega$ into operators characterized by the following result.
\begin{proposition}\label{prop:Frech} 
    Let $\sigma\in L^\infty(\Omega)$ be a conductivity and $\ga\in L^\infty(\Omega,\IR)$. The Fréchet differential $\d \Phi_\sigma(\ga)$ exists as an operator in $\cL(H^{1/2}(\partial\Omega), H^{-1/2}(\partial\Omega))$ that is characterized by the identity
    \begin{equation}\label{e:formalbornka}
        \hp{f}{\d \Phi_\sigma(\ga) g}_{H^{1/2}\times H^{-1/2}} =  \int_\Omega \ga(x)  \nabla u^{\sigma}_f(x) \cdot \nabla u^{\sigma}_g(x)\, \d x, \qquad \forall  f,g \in H^{1/2}(\partial\Omega).
    \end{equation}
    In particular, it always holds that $\d\Phi_\sigma(\sigma)=\Lambda_\sigma$.
\end{proposition}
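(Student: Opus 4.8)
The plan is to differentiate $\Phi$ directly from the variational characterization \eqref{e:Aless} of the DtN map, using the weak form of the conductivity equation to cancel the leading-order contribution coming from the variation of the solution $u_g^\sigma$. I perturb $\sigma$ in a direction $h\in L^\infty(\Omega,\IR)$ and isolate the linear part of $\Phi(\sigma+h)-\Phi(\sigma)$ as $\|h\|_{L^\infty}\to0$. First I would record the uniform well-posedness: since $\sigma\ge c>0$ a.e., whenever $\|h\|_{L^\infty}\le c/2$ the coefficient $\sigma+h$ is uniformly elliptic with $\sigma+h\ge c/2$, so by Lax--Milgram the solution $u_g^{\sigma+h}$ of \eqref{e:conduct} exists, is unique, and satisfies $\|u_g^{\sigma+h}\|_{H^1}\lesssim\|g\|_{H^{1/2}}$ with constant uniform in such $h$. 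The quantitative input I need is the stability bound $\|u_g^{\sigma+h}-u_g^\sigma\|_{H^1}\lesssim\|h\|_{L^\infty}\|g\|_{H^{1/2}}$, which follows because the difference $w:=u_g^{\sigma+h}-u_g^\sigma\in H_0^1(\Omega)$ solves weakly $-\nabla\cdot(\sigma\nabla w)=\nabla\cdot(h\nabla u_g^{\sigma+h})$; testing against $w$ and using uniform ellipticity together with the previous bound gives the estimate.

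Next I would insert the fixed $\sigma$-harmonic extension $u_f^\sigma$ into the first slot of \eqref{e:Aless} for both coefficients and subtract, which gives
\[
\hp{f}{(\Lambda_{\sigma+h}-\Lambda_\sigma)g}=\int_\Omega\sigma\,\nabla u_f^\sigma\cdot\nabla w+\int_\Omega h\,\nabla u_f^\sigma\cdot\nabla u_g^\sigma+\int_\Omega h\,\nabla u_f^\sigma\cdot\nabla w.
\]
The first integral vanishes because $u_f^\sigma$ is $\sigma$-harmonic and $w\in H_0^1(\Omega)$; this is the decisive cancellation. The middle integral is precisely the bilinear form appearing in \eqref{e:formalbornka} evaluated at the direction $h$, and the last integral is bounded by $\|h\|_{L^\infty}\|u_f^\sigma\|_{H^1}\|w\|_{H^1}\lesssim\|h\|_{L^\infty}^2\|f\|_{H^{1/2}}\|g\|_{H^{1/2}}$ thanks to the stability estimate above.

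Finally I would package this as Fréchet differentiability. Defining $L_h$ by $\hp{f}{L_h g}:=\int_\Omega h\,\nabla u_f^\sigma\cdot\nabla u_g^\sigma$, the bilinear form is bounded on $H^{1/2}\times H^{1/2}$ (since $\|u_f^\sigma\|_{H^1}\lesssim\|f\|_{H^{1/2}}$), so $L_h\in\cL(H^{1/2},H^{-1/2})$ by $H^{1/2}$--$H^{-1/2}$ duality, and $h\mapsto L_h$ is linear with $\|L_h\|\lesssim\|h\|_{L^\infty}$. The previous display then yields $\|\Phi(\sigma+h)-\Phi(\sigma)-L_h\|_{\cL(H^{1/2},H^{-1/2})}=O(\|h\|_{L^\infty}^2)=o(\|h\|_{L^\infty})$, which is exactly Fréchet differentiability at $\sigma$ with $\d\Phi_\sigma(h)=L_h$, i.e.\ \eqref{e:formalbornka}; the identity $\d\Phi_\sigma(\sigma)=\Lambda_\sigma$ follows by taking $h=\sigma$ and comparing with \eqref{e:Aless} for the choice $u_f=u_f^\sigma$. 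I expect the only genuinely analytic point to be the stability estimate and the uniformity of constants as $h\to0$; granting it, the result reduces to the exact algebraic cancellation of the first integral.
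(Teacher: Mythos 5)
Your proof is correct and follows essentially the same route as the paper: both start from the integration-by-parts identity \eqref{e:Aless} with the fixed $\sigma$-harmonic extension $u_f^\sigma$ in the first slot, and the cancellation you obtain by testing against $w\in H^1_0(\Omega)$ is exactly what the paper achieves via the symmetry $\hp{f}{\Lambda_\sigma g}=\hp{g}{\Lambda_\sigma f}$ applied with two different extensions of $g$. The one place you go further is the quantitative stability bound $\|u_g^{\sigma+h}-u_g^\sigma\|_{H^1}\lesssim\|h\|_{L^\infty}\|g\|_{H^{1/2}}$, which yields an $O(\|h\|_{L^\infty}^2)$ remainder uniform in $f,g$ and hence genuine Fréchet differentiability in operator norm, whereas the paper's argument passes to the limit $\eps\to0$ along a fixed direction $\ga$ for fixed $f,g$.
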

\begin{proof}
    Given $\eps>0$ write $K_\eps:=\eps^{-1}(\Phi(\sigma+\eps\gamma) - \Phi(\sigma)) $. For every $f,g\in H^{1/2}(\partial\Omega)$, identity \eqref{e:Aless} implies that
    \begin{equation*}
        \hp{f}{K_\eps g}_{H^{1/2}\times H^{-1/2}}=\int_\Omega \ga(x)  \nabla u^{\sigma}_f(x) \cdot \nabla u^{\sigma+\eps \ga}_g(x)\, \d x ,   
    \end{equation*}
    since
    \begin{align*}
         \int_\Omega \sigma(x)  \nabla u^{\sigma}_f(x) \cdot \nabla u^{\sigma}_g(x)\, \d x=&\hp{f}{\Lambda_\sigma g}_{H^{1/2}\times H^{-1/2}} \\   
         =& \hp{g}{\Lambda_\sigma f}_{H^{1/2}\times H^{-1/2}}= \int_\Omega \sigma(x)  \nabla u^{\sigma}_f(x) \cdot \nabla u^{\sigma+\eps\gamma}_g(x)\, \d x.
    \end{align*}
    As $\lim_{\eps\to 0}\|u^{\sigma+\eps\gamma}_g-u^{\sigma}_g\|_{H^1(\Omega)}=0$, identity \eqref{e:formalbornka} follows. The last assertion follows from comparing \eqref{e:formalbornka} and \eqref{e:Aless}. 
\end{proof}
Once we have $\d\Phi_\sigma(\ga)$ defined for $\ga \in L^\infty(\Om,\R)$ we can extend it linearly to all complex valued $\gamma \in L^\infty(\Om)$. In the following remark we extend $\d\Phi_\sigma$ further to a space that includes $\cE'(\Omega)$, the space of compactly supported distributions on $\Omega$. 
\begin{remark}\label{r:extdiff}
    Formula \eqref{e:formalbornka} shows that $\d\Phi_\sigma(\gamma)$ can be defined for functions $\gamma$ that are less regular than $L^\infty(\Omega)$. For simplicity, assume that we linearize at a conductivity $\sigma\in\cC^\infty(\overline{\Omega})$.  Interior regularity elliptic estimates imply that, for any open set $U$ such that $\overline{U}\subset \Omega$ and for any $m\geq 1$, one has that 
    \begin{equation}\label{e:intelreg}
        \|u^\sigma_f\|_{H^m(U)}\leq C_m\|u^\sigma_f\|_{H^1(\Omega)}\leq C_m' \|f\|_{H^{1/2}(\partial\Omega)},
    \end{equation}
    uniformly in $f\in H^{1/2}(\partial\Omega)$.
    Estimate \eqref{e:intelreg} and identity \eqref{e:formalbornka} show that $\d\Phi_\sigma(\ga)$ can be defined as a bounded linear operator from $H^{1/2}(\p\Omega)$ to $H^{-1/2}(\p\Omega)$ for $\ga\in\cE'(\Omega)$ as:
    \begin{equation*}
        \hp{f}{\d \Phi_{\sigma}(\ga) g}_{H^{1/2}\times H^{-1/2}}:=\hp{\ga}{\chi\nabla u_f^{\sigma}\cdot \nabla u_g^{\sigma}}_{\cE'\times \cC^\infty},
    \end{equation*}
    where $\chi\in\cC^\infty_c(\Omega)$ satisfies $\chi \ga =\ga$. This definition is independent of the specific choice of $\chi$.
    More specifically, if $\ga$ is of order $k\in\IN$ and is supported on some open set $U$ properly contained in $\Omega$, one has by Sobolev's inequalities, taking $m$ in \eqref{e:intelreg} large enough,
    \begin{equation*}
        \left|\hp{f}{\d\Phi_\sigma(\ga) g}_{H^{1/2}\times H^{-1/2}}\right|\leq C_\ga\|\nabla u_f^\sigma\cdot\nabla u_g^\sigma\|_{\cC^k(U)}\leq C \|f\|_{H^{1/2}(\p\Omega)}\|g\|_{H^{1/2}(\p\Omega)}.
    \end{equation*}
    uniformly in $f,g\in H^{1/2}(\p\Omega)$. Therefore, when $\sigma\in \cC^\infty(\overline{\Omega})$ is a conductivity, $\d\Phi_\sigma$ can be extended to a linear map from $L^\infty(\Omega)+\cE'(\Omega)$ to $\cL(H^{1/2}(\p\Omega),H^{-1/2}(\p\Omega))$.
\end{remark}
In order to study the existence of the Born approximation, it will be useful to consider Dirichlet-to-Neumann maps associated to a Schrödinger operator.  
Given $q\in L^\infty(\Omega,\IR)$ such that $0$ is not a Dirichlet eigenvalue of $-\Delta+q$ and $g\in H^{1/2}(\partial\Omega)$ define $v_g^q$ to be the unique solution to 
\begin{equation}
    \label{e:schrod}
    \left\{\begin{array}{rl}
        (-\Delta +q) v_g^q =0,  &  \text{ in }\Omega,\\
        v_g^q= g, &   \text{ on }\partial\Omega.
    \end{array}\right.
\end{equation}
The corresponding Dirichlet-to-Neumann map is $\Lambda^{\rm S}_{q} g:=\partial_\nu v_g^q$. Analogously, one can consider the map $\Phi^{\rm S}(q_0):=\Lambda^{\rm S}_{q_0}$, defined for potentials $q_0$ in $L^\infty(\Omega,\IR)$ whose corresponding Schrödinger operator does not have zero in its Dirichlet spectrum. 
One can prove by integration by parts that for every $f,g\in H^{1/2}(\partial\Omega)$ and $q\in L^\infty(\Omega,\IR)$, one has:
\begin{equation}\label{e:Aless2}
    \hp{f}{(\La^{\rm S}_q-\La^{\rm S}_{q_0}) g}_{H^{1/2}\times H^{-1/2}}  = \int_\Omega  (q-q_0)(x) v_f^{q_0}(x) v_g^q (x) \,\d x,
\end{equation}
an integration by parts identity that goes back to \cite{Alessandrini88}. Therefore, a similar reasoning to that in the proof of \Cref{prop:Frech} gives
\begin{equation}\label{e:frechp}
    \hp{f}{\d \Phi_{q_0}^{\rm S}(q) g}_{H^{1/2}\times H^{-1/2}} =  \int_\Omega q(x)  v^{q_0}_f(x) v^{q_0}_g(x)\, \d x, \qquad \forall f,g\in H^{1/2}(\partial\Omega).
\end{equation}
A similar argument to that presented in \Cref{r:extdiff} (see also the proof of \Cref{l:comp} below) shows that as soon as $q_0\in \cC^\infty(\overline{\Omega},\IR)$, one can extend $\d \Phi_{q_0}^{\rm S}$ to $L^\infty(\Omega)+\cE'(\Omega)$. One sets, for $q\in\cE'(\Omega)$,
\begin{equation}\label{e:defdis}
    \hp{f}{\d \Phi_{q_0}^{\rm S}(q) g}_{H^{1/2}\times H^{-1/2}}:=\hp{q}{\chi v_f^{q_0} v_g^{q_0}}_{\cE'\times \cC^\infty},
\end{equation}
for some $\chi\in\cC^\infty_c(\Omega)$ with $\chi q =q$.

The two classes of DtN maps are related as follows. Suppose $\ga\in W^{2,\infty}(\Omega)$ is a conductivity and let 
\begin{equation} \label{e:q_funct}
    q(\gamma):=\frac{\Delta\sqrt{\ga}}{\sqrt{\ga}}.
\end{equation}
Then $q(\ga) \in L^\infty(\Om,\IR)$ and $u_f^\ga$ solves \eqref{e:conduct} if and only if $\sqrt{\ga}u_f^\ga$ solves \eqref{e:schrod} with $q=q(\gamma)$ and $g=\sqrt{\ga}f$. Moreover, one has:
\begin{equation}\label{e:reldtn}
    \Lambda_\ga f = \sqrt{\ga} \Lambda^{\rm S}_{q(\ga)} (\sqrt{\ga}f) - \frac{\partial_\nu\ga}{2}f.
\end{equation}
This identity allows us to prove a different representation formula for the Fréchet differential of $\Phi$ for smoother conductivities. 
\begin{proposition}\label{prop:Frech2} Let $\sigma\in \cC^\infty(\cc{\Omega})$ be a conductivity and let $\ga\in W^{2,\infty}(\Omega)+\cE'(\Omega)$.
The Fréchet differential $\d \Phi_\sigma(\ga)$ satisfies, for every $f\in H^{1/2}(\partial\Omega) $:
\begin{equation}\label{e:relfrech}
    \frac{1}{\sqrt{\sigma}}\d \Phi_\sigma(\ga) \left(\frac{1}{\sqrt{\sigma}}f\right) =\frac{1}{2}\left(\frac{\ga}{\sigma} \Lambda_{q(\sigma)}^{\rm S}(f)  + \Lambda_{q(\sigma)}^{\rm S} \left(\frac{\ga}{\sigma}f\right) - \frac{\partial_\nu\ga}{\sigma}f + \d \Phi_{q(\sigma)}^{\rm S}(\cL_\sigma \ga)(f)\right),  
\end{equation}
where $\ga|_{\partial\Omega},\partial_\nu\ga|_{\partial\Omega}$ should be interpreted as zero when $\ga\in\cE'(\Omega)$ and,
\begin{equation}\label{e:frechet2}
      \cL_\sigma \ga:=\frac{1}{\sigma}\Div\left(\sigma\nabla\left(\frac{\ga}{\sigma}\right)\right).      
\end{equation}
\end{proposition}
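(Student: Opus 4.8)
The plan is to obtain \eqref{e:relfrech} by differentiating the change-of-variables identity \eqref{e:reldtn} along the affine path $\sigma_t := \sigma + t\ga$. For $\ga \in W^{2,\infty}(\Omega)$ and $|t|$ small, $\sigma_t$ is again a conductivity, and since $\sqrt{\sigma_t} > 0$ solves $(-\Delta + q(\sigma_t))\sqrt{\sigma_t} = 0$ by the definition \eqref{e:q_funct}, the value $0$ stays out of the Dirichlet spectrum of $-\Delta + q(\sigma_t)$; hence $q(\sigma_t) \in L^\infty(\Omega,\IR)$ is an admissible potential and $\Lambda^{\rm S}_{q(\sigma_t)}$ is defined. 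Writing \eqref{e:reldtn} for $\sigma_t$,
\begin{equation*}
    \Lambda_{\sigma_t} f = \sqrt{\sigma_t}\, \Lambda^{\rm S}_{q(\sigma_t)}(\sqrt{\sigma_t}\,f) - \frac{\partial_\nu\sigma_t}{2} f,
\end{equation*}
and differentiating at $t = 0$, the left-hand side produces $\d\Phi_\sigma(\ga) f$ by the definition of the Fréchet derivative, while a product-and-chain-rule expansion of the right-hand side yields the four terms of \eqref{e:relfrech}: the factor $\frac{d}{dt}\big|_0\sqrt{\sigma_t} = \frac{\ga}{2\sqrt{\sigma}}$ in front of $\Lambda^{\rm S}_{q(\sigma)}$ gives the term $\frac{\ga}{\sigma}\Lambda^{\rm S}_{q(\sigma)}(f)$, the same factor inside the argument gives $\Lambda^{\rm S}_{q(\sigma)}(\frac{\ga}{\sigma}f)$, the term $-\frac{\partial_\nu\sigma_t}{2}f$ gives $-\frac{\partial_\nu\ga}{\sigma}f$, and the differentiation of the potential $q(\sigma_t)$, combined with the differentiability of $q_0 \mapsto \Lambda^{\rm S}_{q_0}$ encoded in \eqref{e:frechp}, produces $\d\Phi_{q(\sigma)}^{\rm S}(\dot q)(f)$. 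The final substitution $f \mapsto f/\sqrt{\sigma}$ followed by multiplication by $1/\sqrt{\sigma}$ collects the prefactors into the stated form.

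The computational heart is the identity $\dot q := \frac{d}{dt}\big|_0 q(\sigma_t) = \frac{1}{2}\cL_\sigma\ga$, with $\cL_\sigma$ as in \eqref{e:frechet2}. I would verify it by a direct calculation: setting $s := \sqrt{\sigma}$ and $\dot s := \frac{\ga}{2s}$, the quotient rule gives $\dot q = \frac{\Delta\dot s}{s} - \frac{\Delta s}{s^2}\dot s$, and expanding $\cL_\sigma\ga = \frac{1}{s^2}\Div(s^2\nabla(\ga/s^2))$ with the Leibniz rule for the divergence and for the Laplacian one checks that $\frac{1}{s}\Delta(\ga/s) - \frac{\Delta s}{s}\cdot\frac{\ga}{s^2} = \cL_\sigma\ga$, which is exactly $2\dot q$. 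This is where the specific algebraic structure of $q(\ga) = \Delta\sqrt{\ga}/\sqrt{\ga}$ enters, but it is routine once the terms are written out.

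The main obstacle is the rigorous justification of the term-by-term differentiation at $t = 0$, in particular the chain rule for the composite $t \mapsto \Lambda^{\rm S}_{q(\sigma_t)}(\sqrt{\sigma_t}f)$, which depends on $t$ both through the potential and through the boundary datum. I would establish this from three facts: that $t \mapsto q(\sigma_t)$ is differentiable in $L^\infty(\Omega)$ with derivative $\dot q \in L^\infty(\Omega)$ (using $\sigma \in \cC^\infty(\cc\Omega)$, $\ga \in W^{2,\infty}(\Omega)$, and the lower bound on $\sigma_t$), that $q_0 \mapsto \Lambda^{\rm S}_{q_0}$ is Fréchet differentiable near $q(\sigma)$ with differential \eqref{e:frechp} (the Schrödinger analogue of \Cref{prop:Frech}, which rests on the $H^1$-continuity of $q_0 \mapsto v_g^{q_0}$), and that $t \mapsto \sqrt{\sigma_t}f$ is differentiable as a boundary datum; combining these through the standard product rule for the evaluation $(q_0,h) \mapsto \Lambda^{\rm S}_{q_0}(h)$ yields the differential of the composite. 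Finally, to cover the distributional case $\ga \in \cE'(\Omega)$ I would note that $\cL_\sigma$ has smooth coefficients and hence maps $\cE'(\Omega)$ into $\cE'(\Omega)$, that the boundary terms $\ga|_{\partial\Omega}$ and $\partial_\nu\ga|_{\partial\Omega}$ vanish for $\ga$ supported in the interior, and that both sides of \eqref{e:relfrech} are then defined by the continuous distributional pairings of \Cref{r:extdiff} and \eqref{e:defdis} against the fixed smooth functions $\chi\nabla u^\sigma_f\cdot\nabla u^\sigma_g$ and $\chi v^{q(\sigma)}_f v^{q(\sigma)}_g$; approximating $\ga$ in $\cE'(\Omega)$ by functions in $\cC^\infty_c(\Omega)$, to which the function case applies, and passing to the limit delivers the identity in full generality.
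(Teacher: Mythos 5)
Your proposal is correct and follows essentially the same route as the paper: differentiating the change-of-variables identity \eqref{e:reldtn} at $\sigma$ in the direction $\ga$, verifying the key computation $\d q_\sigma(\ga)=\tfrac12\cL_\sigma\ga$, and extending to $\ga\in\cE'(\Omega)$ by a density and regularization argument. Your more careful justification of the term-by-term (chain-rule) differentiation and your explicit algebraic check of the identity for $\cL_\sigma\ga$ simply flesh out steps the paper leaves to the reader.
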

\begin{proof}
    Identity \eqref{e:reldtn} can be rewritten as:
    \begin{equation*}
        \Phi(\sigma) = \sqrt{\sigma} \Phi^{\rm S}(q(\sigma)) \sqrt{\sigma} - \frac{\partial_\nu\sigma}{2}\Id.
    \end{equation*}
    Differentiating this identity at $\sigma$ along the direction $\ga\in W^{2,\infty}(\Omega,\IR)$ and using the fact that the Fréchet differential of $\ga\longmapsto q(\ga)$ at $\sigma$ satisfies:
    \begin{equation*}
        \d q_\sigma(\ga)=\frac{1}{2\sqrt{\sigma}}\left[\Delta\left(\frac{\ga}{\sqrt{\sigma}}\right) - q(\sigma)\frac{\ga}{\sqrt{\sigma}}\right]=\frac{1}{2}\cL_\sigma \ga,
    \end{equation*}
    gives the result in that case. By linearity this can be extended to complex valued $\gamma \in W^{2,\infty}(\Om)$. To conclude, observe that when $\ga$ is compactly supported in $\Omega$, identity \eqref{e:relfrech} reduces to
    \begin{equation*}
        \frac{1}{\sqrt{\sigma}}\d \Phi_\sigma(\ga) \left(\frac{1}{\sqrt{\sigma}}f\right) =\frac{1}{2}\d \Phi_{q(\sigma)}^{\rm S}(\cL_\sigma \ga)(f).
    \end{equation*}
     Using this together with a density and regularization argument, one obtains the result for $\ga\in\cE'(\Omega)$.
\end{proof}

\begin{remark}  \label{r:neumann}
    The functional associating a conductivity to its Neumann-to-Dirichlet map $\ga\longmapsto\Lambda_\ga^{-1}$ can be linearized in a similar fashion. Its Fréchet differential at $\sigma$ is related to that of $\Phi$ by
    \begin{equation*}
        \ga\longmapsto-\Lambda_\sigma^{-1}\d\Phi_\sigma(\ga)\Lambda_\sigma^{-1}   . 
    \end{equation*}
   In fact, the analysis that follows can be performed, with minor changes, in that setting.
\end{remark}

We conclude this section by showing that under suitable assumptions, both $\La^{\rm S}_q-\La^{\rm S}_{q_0}$ and $\d \Phi_{q_0}^{\rm S}(q)$ are compact operators on $L^2(\partial\Omega)$. 
\begin{proposition}\label{l:comp}
    Let $q_0\in L^\infty(\Omega,\IR)$ such that zero is not a Dirichlet eigenvalue of $-\Delta+q_0$. The following are compact operators on $L^2(\partial\Omega)$:
    \begin{enumerate}[i)]
        \item $\La^{\rm S}_q-\La^{\rm S}_{q_0}$, provided that $q\in L^\infty(\Omega,\IR)$, and zero is not a Dirichlet eigenvalue of $-\Delta+q$.
        \item $\d \Phi_{q_0}^{\rm S}(q)$, provided that, in addition, $q_0\in \cC^\infty(\overline{\Omega})$ and $q\in L^\infty(\Omega)+\cE'(\Omega)$.
    \end{enumerate}
\end{proposition}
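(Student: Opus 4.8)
The plan is to show, in each case, that the operator maps $L^2(\p\Omega)$ boundedly into $H^s(\p\Omega)$ for some $s>0$ (in fact into $\cC^\infty(\p\Omega)$ in the distributional part of ii)); since $\p\Omega$ is a compact manifold, the embedding $H^s(\p\Omega)\hookrightarrow L^2(\p\Omega)$ is compact by Rellich's theorem, and compactness of the operator follows. The tools I would use are standard elliptic facts that are available under the stated hypotheses: because $0$ is not a Dirichlet eigenvalue of $-\Delta+q_0$ (resp. of $-\Delta+q$), the Poisson solution maps $g\mapsto v_g^{q_0}$ and $g\mapsto v_g^q$ are bounded from $L^2(\p\Omega)$ to $L^2(\Omega)$ (indeed to $H^{1/2}(\Omega)$) by transposition; and the Dirichlet solution operator $G_{q_0}$ of $(-\Delta+q_0)\psi=\cdot$, $\psi|_{\p\Omega}=0$, gains two derivatives, $G_{q_0}\colon L^2(\Omega)\to H^2(\Omega)\cap H^1_0(\Omega)$, by elliptic regularity on the smooth domain $\Omega$.

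The common ingredient is the mapping property of the adjoint of the Poisson operator. Integrating by parts, for $\phi\in L^2(\Omega)$ and $f\in H^{1/2}(\p\Omega)$ one finds $\int_\Omega \phi\, v_f^{q_0}\,\d x=\hp{f}{\p_\nu(G_{q_0}\phi)}_{H^{1/2}\times H^{-1/2}}$, where $\psi:=G_{q_0}\phi$ solves $(-\Delta+q_0)\psi=\phi$ with $\psi|_{\p\Omega}=0$. Since $G_{q_0}\phi\in H^2(\Omega)$, the trace theorem gives $\p_\nu(G_{q_0}\phi)\in H^{1/2}(\p\Omega)$, so the functional $\phi\mapsto \p_\nu(G_{q_0}\phi)|_{\p\Omega}$ is bounded from $L^2(\Omega)$ to $H^{1/2}(\p\Omega)$. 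This is the one derivative of extra smoothing that will produce compactness.

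For i), I would rewrite \eqref{e:Aless2} as $\hp{f}{(\La^{\rm S}_q-\La^{\rm S}_{q_0})g}_{H^{1/2}\times H^{-1/2}}=\int_\Omega \big[(q-q_0)\,v_g^q\big]\,v_f^{q_0}\,\d x$, which by the previous step identifies $(\La^{\rm S}_q-\La^{\rm S}_{q_0})g=\p_\nu\big(G_{q_0}[(q-q_0)\,v_g^q]\big)|_{\p\Omega}$. For $g\in L^2(\p\Omega)$ we have $v_g^q\in L^2(\Omega)$ and hence $(q-q_0)\,v_g^q\in L^2(\Omega)$ because $q-q_0\in L^\infty(\Omega)$; thus the operator maps $L^2(\p\Omega)$ into $H^{1/2}(\p\Omega)$, giving compactness. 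The case $q\in L^\infty$ of ii) is identical: rewriting \eqref{e:frechp} as $\hp{f}{\d\Phi^{\rm S}_{q_0}(q)g}_{H^{1/2}\times H^{-1/2}}=\int_\Omega\big[q\,v_g^{q_0}\big]\,v_f^{q_0}\,\d x$ gives $\d\Phi^{\rm S}_{q_0}(q)g=\p_\nu\big(G_{q_0}[q\,v_g^{q_0}]\big)|_{\p\Omega}$, which again maps $L^2(\p\Omega)$ into $H^{1/2}(\p\Omega)$.

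The distributional case $q\in\cE'(\Omega)$ in ii) is where the real work lies, and I expect it to be the main obstacle. Here one starts from the definition \eqref{e:defdis}, $\hp{f}{\d\Phi^{\rm S}_{q_0}(q)g}_{H^{1/2}\times H^{-1/2}}=\hp{q}{\chi v_f^{q_0}v_g^{q_0}}_{\cE'\times\cC^\infty}$, and sets $T_g:=q\,(\chi v_g^{q_0})\in\cE'(\Omega)$, a compactly supported distribution of the same finite order $k$ as $q$ and with support in a fixed $K\Subset\Omega$. The interior estimate \eqref{e:intelreg}, used here with $\|f\|_{H^{1/2}}$ replaced by $\|g\|_{L^2(\p\Omega)}$ (legitimate since $v_g^{q_0}$ solves the homogeneous equation with $\|v_g^{q_0}\|_{L^2(\Omega)}\le C\|g\|_{L^2(\p\Omega)}$, and $q_0\in\cC^\infty(\ol{\Omega})$), shows that $g\mapsto \chi v_g^{q_0}$ is bounded from $L^2(\p\Omega)$ into $\cC^\infty_c(\Omega)$ with all seminorms controlled, whence $\|T_g\|_{H^{-N}(\Omega)}\le C\|g\|_{L^2(\p\Omega)}$ for a suitable $N=N(k,d)$. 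Finally, because $T_g$ is supported away from $\p\Omega$, the function $\psi_g:=G_{q_0}T_g$ solves $(-\Delta+q_0)\psi_g=0$ in a fixed neighborhood of $\p\Omega$ with $\psi_g|_{\p\Omega}=0$, so elliptic regularity up to the boundary makes $\psi_g$ smooth there and $\d\Phi^{\rm S}_{q_0}(q)g=\p_\nu\psi_g|_{\p\Omega}\in\cC^\infty(\p\Omega)$; the kernel $(x,y)\mapsto\p_{\nu_x}G_{q_0}(x,y)$, smooth for $x\in\p\Omega$ and $y\in K$, then yields the uniform bound $\|\d\Phi^{\rm S}_{q_0}(q)g\|_{H^s(\p\Omega)}\le C\|g\|_{L^2(\p\Omega)}$ for every $s$. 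The delicate points are precisely that the interior support of $q$ upgrades the regularity of the output all the way to $\cC^\infty(\p\Omega)$, and that this gain is \emph{uniform} in $g$; establishing them requires combining the interior estimates \eqref{e:intelreg} with the boundary smoothness of the Green's function, rather than merely invoking the $H^{1/2}$ gain that sufficed in the $L^\infty$ case.
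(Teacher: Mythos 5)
Your proof is correct, but it reaches compactness by a different mechanism than the paper. Both arguments rest on the same factorization through the Dirichlet Green operator and the Poisson operator, but the paper puts the compactness in the \emph{interior}: it writes $(\La^{\rm S}_q-\La^{\rm S}_{q_0})f=-\p_\nu(R_qM_{q-q_0}P_{q_0}f)$ and $\d\Phi^{\rm S}_{q_0}(q)f=-\p_\nu(R_{q_0}M_qP_{q_0}f)$ and invokes Rellich in $\Omega$ to make the factor $P_{q_0}\colon L^2(\p\Omega)\to L^2(\Omega)$ compact, the remaining factors being merely bounded. You instead put the compactness on the \emph{boundary}: you show the whole operator gains half a derivative on $\p\Omega$ (your decomposition is the transpose of the paper's, with $G_{q_0}$ acting on $(q-q_0)v^q_g$ rather than $R_q$ acting on $(q-q_0)v_g^{q_0}$ --- both are legitimate since the operators are symmetric; note only that your transposition identity should carry a minus sign, $\int_\Omega\phi\,v_f^{q_0}\,\d x=-\hp{f}{\p_\nu(G_{q_0}\phi)}$, which is immaterial here). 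The divergence is sharpest in the distributional case of ii): the paper avoids any boundary regularity for the adjoint problem by the duality bound \eqref{e:ultimatebound}, $\|\d\Phi^{\rm S}_{q_0}(q)g\|_{L^2(\p\Omega)}^2\leq C\|P_{q_0}g\|_{L^2(\Omega)}\|g\|_{L^2(\p\Omega)}$, and then tests against weakly null sequences, again leaning on compactness of $P_{q_0}$; you instead solve $(-\Delta+q_0)\psi_g=T_g$ with $T_g\in\cE'$ supported in a fixed compact set and use elliptic regularity up to the boundary on the annulus where the equation is homogeneous. Your route needs the slightly heavier input that $G_{q_0}$ is bounded from a fixed negative Sobolev space (containing the $T_g$, uniformly in $g$) into some $H^{2-N}(\Omega)$, plus local boundary regularity; in exchange it proves more, namely that $\d\Phi^{\rm S}_{q_0}(q)$ maps $L^2(\p\Omega)$ boundedly into $\cC^\infty(\p\Omega)$ when $q\in\cE'(\Omega)$, i.e.\ the operator is infinitely smoothing rather than merely compact. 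Both proofs are sound; the paper's is more economical, yours is more quantitative.
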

\begin{proof}
    For $q\in L^\infty(\Omega,\IR)$, define the operators:
    \begin{itemize}
        \item $M_q$ is the bounded linear operator on $L^2(\Omega)$ that acts by multiplication by $q$.
        \item $R_q$ is the Dirichlet resolvent $(-\Delta+q)^{-1}$. If zero is not a Dirichlet eigenvalue of $-\Delta+q$, then this is a well-defined, bounded linear operator
        \begin{equation*}
            R_q: L^2(\Omega)\To H^2(\Omega)\cap H^1_0(\Omega).
        \end{equation*}
        \item $P_q$ is the $q$-harmonic extension operator from $\partial\Omega$. As soon as zero is not a Dirichlet eigenvalue of $-\Delta+q$, it is well-defined as $P_{q}f:=u^q_f$, the unique solution to \eqref{e:schrod}  with boundary datum $f$. Since one has 
        \begin{equation}\label{e:poisson0}
            P_q = (\Id - R_q M_q)P_0,
        \end{equation}
        and $P_0$ is nothing but the Poisson integral, which maps continuously $L^2(\partial\Omega)$ in $H^{1/2}(\Omega)$ (see, for instance, \cite[Section~7.12]{Taylor2}), it follows that 
        \begin{equation}\label{e:poisson1}
            P_q\in \cL(L^2(\partial\Omega),H^{1/2}(\Omega)), \qquad P_qf\in H^2_{\rm loc}(\Omega), \quad \forall f\in L^2(\partial\Omega).
        \end{equation}
    \end{itemize}
    Identities \eqref{e:Aless2} and \eqref{e:frechp} imply, respectively, that for every $f\in L^2(\partial\Omega)$
    \begin{equation}\label{e:idendtn}
        (\La^{\rm S}_q - \La^{\rm S}_{q_0})f = -\partial_\nu(R_q M_q P_{q_0}f), \qquad \d \Phi_{q_0}^{\rm S}(q)f=-\partial_\nu(R_{q_0}M_q P_{q_0}f),
    \end{equation}
     Statement i) follows from the decomposition \eqref{e:idendtn} and the Rellich embedding theorem, which ensures, under the hypotheses of the lemma, that $R_q$ and $P_{q_0}$ are compact operators into $L^2(\Omega)$. This same argument also shows that ii) holds when $q\in L^\infty(\Omega)$.
    
     We now prove ii). Suppose now that $q_0\in \cC^\infty(\overline{\Omega},\IR)$. Standard elliptic theory gives, for every $\chi\in \cC^\infty_c(\Omega)$ and any $m\geq 1$, the existence of $C_m>0$ such that
    \begin{equation}\label{e:intelregq}
        \|\chi P_{q_0}f\|_{H^m(\Omega)}\leq C_m\|P_{q_0}f\|_{L^2(\Omega)}\leq C_m \|P_{q_0}f\|_{H^{1/2}(\Omega)}\leq C_m' \|f\|_{L^2(\partial\Omega)},
    \end{equation}
    uniformly in $f\in L^2(\partial\Omega)$. This implies that for every $q\in \cE'(\Omega)$ (which is necessarily of finite order), $\d \Phi_{q_0}^{\rm S}(q)$ is a bounded operator on $L^2(\p \Omega)$. In fact, if $\chi\in \cC^\infty_c(\Omega)$ is such that $\chi^2 q=q$, taking $m>0$ large enough we obtain, by Sobolev's inequalities and the fact that $H^m(\Omega)$ is an algebra with respect to multiplication, 
    \begin{align*}
        \left|\hp{f}{\d\Phi^{\rm S}_{q_0}(q) g}_{H^{1/2}\times H^{-1/2}}\right| =&
        \left|\hp{q}{\chi^2 P_{q_0}f P_{q_0}g}_{\cE'\times \cC^\infty}\right|\\\leq& C_q \|\chi P_{q_0}f \chi P_{q_0}g\|_{\cC^k(\Omega)} \\ \leq& C_q' \|\chi P_{q_0}f\|_{H^m(\Omega)}\|\chi P_{q_0}g\|_{H^m(\Omega)} \\
        \leq& C_q'' \|P_{q_0}f\|_{L^2(\Omega)}\| P_{q_0}g\|_{L^2(\Omega)} \leq C \|f\|_{L^2(\partial\Omega)}\|g\|_{L^2(\partial\Omega)},
    \end{align*}
    for $f,g\in L^2(\partial\Omega)$. 
    In particular, letting $f=\d \Phi_{q_0}^{\rm S}(q)g$, we obtain using again \eqref{e:intelregq} the uniform bound:
    \begin{equation}\label{e:ultimatebound}
        \|\d \Phi_{q_0}^{\rm S}(q)g\|_{L^2(\partial\Omega)}^2\leq C\|P_{q_0}g\|_{L^2(\Omega)}\|\d \Phi_{q_0}^{\rm S}(q)g\|_{L^2(\partial\Omega)}\leq C'\|P_{q_0}g\|_{L^2(\Omega)}\|g\|_{L^2(\partial\Omega)}.
    \end{equation}
    Let $(g_n)_{n\in\IN}$ be any bounded sequence in $L^2(\partial\Omega)$ that converges weakly to zero. Then, since $P_{q_0}$ is compact, $(P_{q_0}g_n)_{n\in\IN}$ converges strongly to zero in $L^2(\Omega)$. Using \eqref{e:ultimatebound} we find that 
    \[
    \lim_{n\to\infty}\|\d \Phi_{q_0}^{\rm S}(q)g_n\|_{L^2(\partial\Omega)}=0,
    \]
    which shows that $\d \Phi_{q_0}^{\rm S}(q)$ is a compact operator.
\end{proof}

\subsection{The Born approximation in the Calderón problem}\ \label{s:2_2}

The Born approximation of a conductivity $\ga\in L^\infty(\Omega)$ with respect to a background conductivity $\sigma\in L^\infty(\Omega)$ is formally defined as a function (or distribution) $\gb_\sigma$ on $\Omega$ that satisfies \eqref{e:defbor0} or, equivalently:
\begin{equation}\label{e:defborn}
    \gb_\sigma =\sigma + \d \Phi_{\sigma}^{-1}(\Lambda_\gamma-\Lambda_\sigma) = \d \Phi_{\sigma}^{-1}(\Lambda_\gamma),
\end{equation}
the last equality being a consequence of \Cref{prop:Frech}.
Note that for this definition to make sense, one should have that $\Lambda_\gamma$ belongs to the range of some extension of $\d\Phi_\sigma$ to a larger space containing $L^\infty(\Omega)$. In general, the ansatz \eqref{e:defborn} must be modified (see \Cref{r:freemoments}); nonetheless, \eqref{e:defborn} provides a rigorous definition of the Born approximation when $\Om = \IB^d$, $\sigma=\sigma_{\kappa,d}$, and $\gamma$ has rotational symmetry, as we show in \Cref{t:existencegb}. 

Existence of the Born approximation is proved by reduction to the Schrödinger operator case. Given two potentials $q,q_0\in L^\infty(\Omega)$ one formally defines the Born approximation of $q$ at $q_0$ as
\begin{equation} \label{e:born_pot_def}
    q^{\rm B}_{q_0} := q_0 + (\d \Phi^{\rm S}_{q_0})^{-1}(\Lambda^{\rm S}_q - \Lambda^{\rm S}_{q_0}).
\end{equation}
Existence for the Born approximation has been rigorously established in the case of radial Schrödinger operators in \cite{Radial_Born, MMS-M25}, at zero and constant non-zero potentials, respectively, and in \cite{radial_cond} for radial conductivities, when taken with respect to the conductivity that is identically equal to one. In the radial setting, much less regularity has to be assumed on the potential/conductivity. For instance, for radial $q_0$ in $L^\infty(\Omega)$, it is possible to extend $\d \Phi_{q_0}^{\rm S}$ to the space $L^1(\Omega)$.

The next result connects these two notions of Born approximation.
\begin{proposition}\label{p:bqisgb}
    Let $\sigma\in\cC^\infty(\cc{\Omega})$ be a conductivity. For every conductivity $\ga\in W^{2,\infty}(\Omega)$ such that $\ga|_{\partial\Omega}=a_\ga\sigma|_{\partial\Omega}$ for some constant $a_\ga>0$, the following holds.
    \begin{enumerate}[i)]
        \item If $q(\ga)^{\rm B}_{q(\sigma)}\in L^\infty(\Omega) + L^p_c(\Omega)$ exists,\footnote{We use the notation $F_c(\Omega)$ to denote the set of functions on a function space $F(\Omega)$ that have compact support contained in $\Omega$.}  
        for some $1< p\leq \infty$, then $\gb_\sigma\in W^{2,\infty}(\Omega) + W^{2,p}_c(\Omega)$ exists. 
        If $p=1$, the conclusion holds with $\gb_\sigma\in W^{2,\infty}(\Omega) + W^{1,1}_c(\Omega)$.\footnote{It follows from Calderón-Zygmund theory that the Hessian of $\gb_\sigma$ belongs to the Lorenz space $L^{1,\infty}_c(\Omega,\IC^{d\times d})$.} In both cases, $\gb_\sigma$ can be chosen as the unique solution to
        \begin{equation}\label{e:gbfromqb}
        \cL_\sigma \gb_\sigma = 2a_\ga \left(q(\ga)^{\rm B}_{q(\sigma)}-q(\sigma)\right),\qquad \gb_\sigma  |_{\partial\Omega} = \ga|_{\partial\Omega}, 
        \end{equation}
         and  one has $\partial_\nu\gb_\sigma  |_{\partial\Omega} = \partial_\nu\ga|_{\partial\Omega}$.
        \item If $\gb_\sigma\in W^{2,\infty}(\Omega)+  \cE'(\Omega)$ exists, then one has 
        \begin{equation*}
            \gb_\sigma  |_{\partial\Omega} = \ga|_{\partial\Omega}, \qquad\partial_\nu\gb_\sigma  |_{\partial\Omega}=\partial_\nu\ga|_{\partial\Omega},    
        \end{equation*}
         and the Born approximation  $q(\ga)^{\rm B}_{q(\sigma)}\in L^\infty(\Omega) + \cE'(\Omega)$ exists and can be obtained as:
         \begin{equation} \label{e:born_pot}
             q(\ga)^{\rm B}_{q(\sigma)}=\frac{1}{2a_\gamma}\cL_\sigma \gb_\sigma + q(\sigma).
         \end{equation}
    \end{enumerate}    
\end{proposition}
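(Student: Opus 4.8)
The plan is to transfer the whole question to the Schrödinger picture by means of the two structural identities already available: the representation \eqref{e:relfrech} of $\d\Phi_\sigma$ from \Cref{prop:Frech2} and the conjugation formula \eqref{e:reldtn} relating $\Lambda_\ga$ to $\Lambda^{\rm S}_{q(\ga)}$. The computation at the heart of both parts is to insert $\gb_\sigma$ into \eqref{e:relfrech}. Assuming the Dirichlet identity $\gb_\sigma|_{\partial\Omega}=\ga|_{\partial\Omega}=a_\ga\sigma|_{\partial\Omega}$ (imposed by construction in the first case and established below in the second), the trace $\frac{\gb_\sigma}{\sigma}|_{\partial\Omega}$ is the constant $a_\ga$, so the two first-order terms of \eqref{e:relfrech} collapse to $a_\ga\Lambda^{\rm S}_{q(\sigma)}$; substituting $\cL_\sigma\gb_\sigma=2a_\ga(q(\ga)^{\rm B}_{q(\sigma)}-q(\sigma))$ and the relation $\d\Phi^{\rm S}_{q(\sigma)}(q(\ga)^{\rm B}_{q(\sigma)}-q(\sigma))=\Lambda^{\rm S}_{q(\ga)}-\Lambda^{\rm S}_{q(\sigma)}$ equivalent to \eqref{e:born_pot_def} turns the interior term into $a_\ga(\Lambda^{\rm S}_{q(\ga)}-\Lambda^{\rm S}_{q(\sigma)})$. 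Comparing with \eqref{e:reldtn} conjugated by $\sqrt\sigma$, namely $\frac{1}{\sqrt\sigma}\Lambda_\ga(\frac{1}{\sqrt\sigma}f)=a_\ga\Lambda^{\rm S}_{q(\ga)}f-\frac{\partial_\nu\ga}{2\sigma}f$, one obtains the exact identity
\begin{equation*}
\frac{1}{\sqrt\sigma}\bigl(\d\Phi_\sigma(\gb_\sigma)-\Lambda_\ga\bigr)\Bigl(\frac{f}{\sqrt\sigma}\Bigr)=\frac{\partial_\nu\ga-\partial_\nu\gb_\sigma}{2\sigma}\Big|_{\partial\Omega}f.
\end{equation*}
Hence $\d\Phi_\sigma(\gb_\sigma)=\Lambda_\ga$ holds precisely when the normal derivatives coincide on $\partial\Omega$, and the proposition reduces to controlling this boundary multiplication operator.

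For ii) one assumes $\gb_\sigma\in W^{2,\infty}(\Omega)+\cE'(\Omega)$ exists, so the left-hand side above vanishes, and re-runs the computation without presupposing the boundary condition. Writing $c:=\frac{\gb_\sigma}{\sigma}|_{\partial\Omega}$, the only genuinely first-order contribution is an operator with principal symbol $(c-a_\ga)|\xi'|$, while every remaining term is of order zero or, by \Cref{l:comp}, compact. Since $\Lambda^{\rm S}_{q(\sigma)}$ is elliptic of order one, vanishing of the principal symbol forces $c=a_\ga$, i.e. $\gb_\sigma|_{\partial\Omega}=\ga|_{\partial\Omega}$. With this, only the order-zero multiplier $\frac{\partial_\nu\ga-\partial_\nu\gb_\sigma}{2\sigma}|_{\partial\Omega}$ and compact operators remain, summing to zero; as a nonzero multiplication operator on $L^2(\partial\Omega)$ is never compact, the multiplier vanishes, giving $\partial_\nu\gb_\sigma|_{\partial\Omega}=\partial_\nu\ga|_{\partial\Omega}$. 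The residual compact identity is exactly $\d\Phi^{\rm S}_{q(\sigma)}\bigl(\frac{1}{2a_\ga}\cL_\sigma\gb_\sigma\bigr)=\Lambda^{\rm S}_{q(\ga)}-\Lambda^{\rm S}_{q(\sigma)}$, which says that $\frac{1}{2a_\ga}\cL_\sigma\gb_\sigma+q(\sigma)$ is the Born approximation $q(\ga)^{\rm B}_{q(\sigma)}$, establishing its existence and \eqref{e:born_pot}; membership in $L^\infty(\Omega)+\cE'(\Omega)$ is immediate because $\cL_\sigma$ maps $W^{2,\infty}(\Omega)+\cE'(\Omega)$ into $L^\infty(\Omega)+\cE'(\Omega)$.

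For i) I instead take $\gb_\sigma$ to be the unique solution of the Dirichlet problem \eqref{e:gbfromqb}: writing $v:=\gb_\sigma/\sigma$ it becomes $\Div(\sigma\nabla v)=2a_\ga\sigma(q(\ga)^{\rm B}_{q(\sigma)}-q(\sigma))$ with $v|_{\partial\Omega}=a_\ga$, a uniquely solvable conductivity-type equation. Since $q(\sigma)\in\cC^\infty(\overline\Omega)$, the source lies in $L^\infty(\Omega)+L^p_c(\Omega)$, and elliptic regularity yields the regularity classes asserted in the statement; the only delicate point is the endpoint $p=1$, where the $W^{2,1}$ estimate fails and the Calderón–Zygmund bound placing the Hessian in $L^{1,\infty}_c(\Omega)$ gives $\gb_\sigma\in W^{2,\infty}(\Omega)+W^{1,1}_c(\Omega)$.

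By the first paragraph it then only remains to prove $\partial_\nu\gb_\sigma|_{\partial\Omega}=\partial_\nu\ga|_{\partial\Omega}$, and I expect this to be the main obstacle, since here the compactness dichotomy of ii) is unavailable: $\d\Phi_\sigma(\gb_\sigma)=\Lambda_\ga$ is precisely what we are trying to prove. Letting $N$ send a source $F$ to the normal derivative of the solution of $\cL_\sigma u=F$, $u|_{\partial\Omega}=0$, one has $\partial_\nu\gb_\sigma-\partial_\nu\ga=N\bigl(2a_\ga(q(\ga)^{\rm B}_{q(\sigma)}-q(\sigma))-\cL_\sigma\ga\bigr)$; since $\cL_\sigma$ is formally self-adjoint with $\ker\cL_\sigma=\{\sigma w:\Div(\sigma\nabla w)=0\}$, a Green identity shows this trace vanishes if and only if
\begin{equation*}
\int_\Omega\bigl(\cL_\sigma\ga-2a_\ga(q(\ga)^{\rm B}_{q(\sigma)}-q(\sigma))\bigr)\sigma w\,\d x=0\qquad\text{for every $\sigma$-harmonic }w.
\end{equation*}
The plan is to verify this by exploiting that $\sqrt\sigma$ itself solves $(-\Delta+q(\sigma))\sqrt\sigma=0$: writing $\sigma w=\sqrt\sigma\,(\sqrt\sigma w)$, both factors are $q(\sigma)$-Schrödinger solutions, so by \eqref{e:frechp} and the Born relation the second integral equals the moment $2a_\ga\langle\sqrt\sigma|_{\partial\Omega},(\Lambda^{\rm S}_{q(\ga)}-\Lambda^{\rm S}_{q(\sigma)})(\sqrt\sigma w)|_{\partial\Omega}\rangle$. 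Evaluating this moment through \eqref{e:reldtn}, and discarding the terms $\int_{\partial\Omega}\Lambda_\ga(w|_{\partial\Omega})=0$ and $\int_{\partial\Omega}\sigma\partial_\nu w=0$ (divergence theorem), one is left with $\int_{\partial\Omega}(\partial_\nu\ga-a_\ga\partial_\nu\sigma)w$; integrating $\cL_\sigma\ga$ against $\sigma w$ by parts, and again discarding $\int_{\partial\Omega}\sigma\partial_\nu w=0$, produces the identical boundary expression. Matching the two proves the orthogonality, hence the Neumann identity, after which the displayed identity of the first paragraph upgrades it to $\d\Phi_\sigma(\gb_\sigma)=\Lambda_\ga$, so that $\gb_\sigma$ is the sought Born approximation.
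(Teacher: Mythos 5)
Your proposal is correct, and it shares the paper's skeleton — both reduce everything to the Schrödinger picture through \Cref{prop:Frech2} and \eqref{e:reldtn}, and both arrive at the same pivotal identity $\d\Phi_\sigma(\gb_\sigma)-\Lambda_\ga=\tfrac12(\partial_\nu\ga-\partial_\nu\gb_\sigma)$ (this is \eqref{e:almost} in the paper) — but it diverges in how the two boundary traces are identified, and in both places the paper's route is shorter. For part i), you anticipate that the Neumann identity is "the main obstacle" and prove it by a Green-identity/orthogonality argument: pairing $\cL_\sigma(\gb_\sigma-\ga)$ against $\sigma w$ for $\sigma$-harmonic $w$, converting one side into the moment $2a_\ga\langle\sqrt{\sigma},(\Lambda^{\rm S}_{q(\ga)}-\Lambda^{\rm S}_{q(\sigma)})\sqrt{\sigma}w\rangle$ via the Liouville transform, and matching boundary terms. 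This computation does close (both sides reduce to $\int_{\partial\Omega}(\partial_\nu\ga-a_\ga\partial_\nu\sigma)w$), but it is unnecessary: the paper simply evaluates the pivotal identity at the constant function $1$, for which $\Lambda_\ga(1)=\d\Phi_\sigma(\gb_\sigma)(1)=0$, forcing the multiplier $\tfrac12(\partial_\nu\ga-\partial_\nu\gb_\sigma)$ to vanish in one line — a trick available verbatim in your own displayed identity by taking $f=\sqrt{\sigma}$. For part ii), you replace the paper's use of Brown's boundary determination (the concentrating sequences $f_N^x$ of \cite{Andoni}) by a principal-symbol argument: the order-one part of the operator identity has symbol proportional to $(\gb_\sigma/\sigma|_{\partial\Omega}-a_\ga)$ times the elliptic symbol of $\Lambda^{\rm S}_{q(\sigma)}$, and must vanish since everything else is bounded on $L^2(\partial\Omega)$. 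This is legitimate because $\sigma\in\cC^\infty(\cc{\Omega})$ makes $\Lambda^{\rm S}_{q(\sigma)}$ a classical elliptic pseudodifferential operator of order one and \Cref{l:comp} absorbs $\Lambda^{\rm S}_{q(\ga)}-\Lambda^{\rm S}_{q(\sigma)}$ and $\d\Phi^{\rm S}_{q(\sigma)}(\cL_\sigma\gb_\sigma)$ into the compact remainder; it does require a word on why multiplication by the merely Lipschitz trace $\gb_\sigma/\sigma|_{\partial\Omega}$ commutes with $\Lambda^{\rm S}_{q(\sigma)}$ up to a bounded operator, a point the citation of Brown's result sidesteps. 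Your subsequent step — a nonzero multiplication operator is never compact, hence the Neumann traces agree — is the same dichotomy the paper implements concretely with $|g^x_N|^2\rightharpoonup\delta_x$, and the final extraction of $q(\ga)^{\rm B}_{q(\sigma)}$ from the residual identity coincides with \eqref{e:casiborn}. In short: correct, with two self-contained substitutes for the paper's two boundary-determination devices, at the cost of extra length where the paper's evaluation-at-constants trick does the job instantly.
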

\begin{proof}
    Assume that $q(\ga)^{\rm B}_{q(\sigma)}\in L^\infty(\Omega) + L^p_c(\Omega)$ exists:
    \begin{equation*}
    \d \Phi_{q(\sigma)}^{\rm S}\left(q(\ga)^{\rm B}_{q(\sigma)}-q(\sigma)\right)=\Lambda^{\rm S}_{q(\ga)}-\Lambda^{\rm S}_{q(\sigma)},
\end{equation*}
We define $\gb_\sigma$ as the unique weak solution to:
\begin{equation}\label{e:gbfromqb2  }
    \cL_\sigma \gb_\sigma = 2a_\ga \left(q(\ga)^{\rm B}_{q(\sigma)}-q(\sigma)\right),\qquad \gb_\sigma  |_{\partial\Omega} = \ga|_{\partial\Omega}.
\end{equation}
Calderón-Zygmund type estimates for elliptic operators ensure that $\gb_\sigma$ has the claimed regularity. 
Then, by \Cref{prop:Frech2}:
\begin{equation}\label{e:gbaux}
    \frac{1}{\sqrt{\sigma}}\d \Phi_\sigma(\gb_\sigma) \frac{1}{\sqrt{\sigma}} =\frac{1}{2}\left(2a_\ga \Lambda^{\rm S}_{q(\sigma)} - \frac{\partial_\nu\gb_\sigma}{\sigma} + \d \Phi_{q(\sigma)}^{\rm S}(\cL_\sigma \gb_\sigma)\right).
\end{equation}
This implies that
\begin{equation}\label{e:almost}
   \d \Phi_\sigma(\gb_\sigma) = \sqrt{\sigma}\left(a_\ga \Lambda^{\rm S}_{q(\ga)}-\frac{\partial_\nu\gb_\sigma}{2\sigma}\right)\sqrt{\sigma} = \Lambda_{\ga} + \frac{\partial_\nu\ga-\partial_\nu\gb_\sigma}{2}. 
\end{equation}
On the other hand, as $\Lambda_{\ga}(1)=\d \Phi_\sigma(\gb_\sigma) (1)=0$ one gets 
\begin{equation*}
    0=\d \Phi_\sigma(\gb_\sigma) (1) = \Lambda_{\ga}(1) + \frac{\partial_\nu\ga-\partial_\nu\gb_\sigma}{2}=\frac{\partial_\nu\ga-\partial_\nu\gb_\sigma}{2}. 
\end{equation*}
Inserting this in \eqref{e:almost} shows that $\d \Phi_\sigma(\gb_\sigma)  =\Lambda_\ga$. 

To prove the converse, assume that a $\gb_\sigma\in W^{2,\infty}(\Omega)+ \cE'(\Omega)$ exists such that $\d\Phi_\sigma(\gb_\sigma)=\Lambda_\ga$.  We first show that $\gb_\sigma  |_{\partial\Omega} = \ga|_{\partial\Omega}$; to do this, we use a construction due to Brown \cite{Brown} (see also \cite[Theorem~A1]{Andoni}). Fix $x\in\partial\Omega$ and consider the bounded sequence $(f_N^x)_{N\in\IN}$ in $H^{1/2}(\partial\Omega)$ defined in \cite[Theorem~A1]{Andoni}. This result, combined with \eqref{e:Aless} and \eqref{e:formalbornka}, implies that:
\begin{equation*}
    \gb_\sigma(x)=\lim_{N\to\infty}\hp{\cc{f^x_N}}{\d \Phi_\sigma(\gb_\sigma) f_N^x}_{H^{1/2}\times H^{-1/2}}=\lim_{N\to\infty}\hp{\cc{f^x_N}}{\Lambda_\ga f_N^x}_{H^{1/2}\times H^{-1/2}}=\ga(x).
\end{equation*}
Using \eqref{e:gbaux}, \eqref{e:reldtn} and $\gb_\sigma  |_{\partial\Omega} = \ga|_{\partial\Omega}$ we obtain
\begin{equation}\label{e:casiborn}
    \frac{1}{2a_\ga}\d \Phi_{q(\sigma)}^{\rm S}\left(\cL_\sigma\gb_\sigma)\right)=\Lambda^{\rm S}_{q(\ga)} - \Lambda^{\rm S}_{q(\sigma)} -  \frac{\partial_\nu\ga-\partial_\nu\gb_\sigma}{2\ga}.   
\end{equation}
Fix now any $x\in\partial\Omega$ and consider a sequence $(g^x_N)_{N\in\IN}$ in $L^2(\partial\Omega)$ that converges weakly to zero as $N\to\infty$, and such that $(|g_N^x|^2)_{N\in\IN}$ converges to $\delta_x$ weak-$\ast$ in $\cC(\partial\Omega)'$, the space of signed Radon measures on $\partial\Omega$. Since $\d \Phi_{q(\sigma)}^{\rm S}\left(\cL_\sigma\gb_\sigma)\right)$ and $\Lambda^{\rm S}_{q(\ga)} - \Lambda^{\rm S}_{q(\sigma)}$ are compact operators on $L^2(\partial\Omega)$ (by \Cref{l:comp}) and the normal derivatives of $\gb_\sigma$ and $\ga$ are continuous functions on $\partial\Omega$, \eqref{e:casiborn} implies
\[
0=\lim_{N\to\infty}\int_{\partial\Omega}\frac{\partial_\nu\ga-\partial_\nu\gb_\sigma}{2\ga}|g_N^x|^2\d{x}=\frac{\partial_\nu\ga(x)-\partial_\nu\gb_\sigma(x)}{2\ga(x)}.
\]
Therefore, $\partial_\nu\ga|_{\partial\Omega}=\partial_\nu\gb_\sigma  |_{\partial\Omega}$ and \eqref{e:casiborn} allows us to conclude that
\begin{equation*}
    q(\ga)^{\rm B}_{q(\sigma)}:=  \frac{1}{2a_\ga} \cL_\sigma \gb_\sigma + q(\sigma)\in L^\infty(\Omega) + \cE'(\Om),  
\end{equation*}
is the Born approximation of $q(\ga)$ with respect to $q(\si)$, as claimed.
\end{proof}

\subsection{Uniqueness of the Born approximation} \ \label{s:2_3_uniqueness}

In this section we prove that the Born approximation $\gb_{\si}$, if it exists,  is uniquely determined by the DtN map $\La_\ga$ for a certain family of the conductivities  $\si$ that includes the conductivities $\si_{\ka,d}$ defined in \eqref{e:si_ga}. In the proof, we use a standard Complex Geometrical Optics construction.
For $\eta \in \C^d$ define
\begin{equation*}
    e_{\eta}(x) : = e^{\eta \cdot x} ,\qquad \forall x\in \R^d.
\end{equation*}

Let $f\in L^1(\Om)$, we define its Fourier transform  $\cF f$ by the formula 
\begin{equation*}
    \cF f (\xi) := \int_{\Om} e^{-ix\cdot \xi} f(x) \, dx, \qquad \xi \in \R^d, 
\end{equation*}
so that, if we extend $f$ by zero to $\R^d \setminus \Om$, it coincides with the usual Fourier transform on $L^1(\R^d)$. $\cF$ can be extended to $L^1(\Om)+ \cE'(\Om)$ by replacing the integral by the $\R^d$ distributional duality pairing.

We start with the simplest case: we assume $\si=1$ on $\Om$.
\begin{theorem}\label{thm:Born_d=2_uniq_0_Om}
    Let $\ga\in L^\infty(\Om)$ be a conductivity. For $\xi\in \R^d$, let $\eta_1,\eta_2\in \IC^d$ such that $\eta_1 \cdot \eta_1 = \eta_2 \cdot \eta_2  = 0$ and $\eta_1+\eta_2 = -i\xi$. If $\gb_{1}\in L^\infty(\Om) + \cE'(\Om)$ exists, that is, $\d\Phi_{1}(\gb_{1})=\Lambda_{\ga}$,
    then we have that
    \begin{equation*}
        \cF \gb_1 (\xi)= -\frac{2}{|\xi|^2} \hp{e_{\eta_1}}{\Lambda_\gamma e_{\eta_2}}_{H^{1/2}\times H^{-1/2}}, \qquad \forall \xi \in \R^d\setminus \{0\}.
\end{equation*}
    As a consequence $\gb_{1}$ is uniquely determined by $\La_\ga$ in $L^\infty(\Om)+ \cE'(\Om)$.
\end{theorem}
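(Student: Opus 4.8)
The plan is to exploit the classical Complex Geometrical Optics (CGO) exponentials $e_{\eta}(x)=e^{\eta\cdot x}$ directly as test functions in the characterization of $\d\Phi_1$ provided by \Cref{prop:Frech}. The decisive observation is that the constraint $\eta\cdot\eta=0$ makes $e_\eta$ harmonic on all of $\IR^d$, since $\Delta e_\eta=(\eta\cdot\eta)e_\eta=0$. Because the background is $\si=1$, equation \eqref{e:conduct} reduces to Laplace's equation, so the harmonic extension of $e_\eta|_{\p\Om}$ is $e_\eta$ itself; by uniqueness of the $H^1$ solution this means $u^{1}_{e_{\eta}}=e_\eta$ on $\Om$. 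This is what lets me bypass the usual construction of \emph{approximate} CGO solutions: the exponentials are exact solutions of the background problem, and (being smooth) their traces lie in $H^{1/2}(\p\Om)$, so they are admissible boundary data.

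Taking $f=e_{\eta_1}$ and $g=e_{\eta_2}$ and inserting $u^1_{e_{\eta_1}}=e_{\eta_1}$, $u^1_{e_{\eta_2}}=e_{\eta_2}$ into \eqref{e:formalbornka}, I would use $\nabla e_\eta=\eta\,e_\eta$ to get
\begin{equation*}
    \nabla e_{\eta_1}\cdot\nabla e_{\eta_2}=(\eta_1\cdot\eta_2)\,e^{(\eta_1+\eta_2)\cdot x}=(\eta_1\cdot\eta_2)\,e^{-i\xi\cdot x}.
\end{equation*}
The scalar $\eta_1\cdot\eta_2$ is forced by the hypotheses: expanding $(\eta_1+\eta_2)\cdot(\eta_1+\eta_2)=(-i\xi)\cdot(-i\xi)=-|\xi|^2$ and using $\eta_1\cdot\eta_1=\eta_2\cdot\eta_2=0$ gives $\eta_1\cdot\eta_2=-|\xi|^2/2$. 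Hence the integrand equals $-\tfrac{|\xi|^2}{2}e^{-i\xi\cdot x}$ and \eqref{e:formalbornka} becomes
\begin{equation*}
    \hp{e_{\eta_1}}{\d\Phi_1(\gb_1)\,e_{\eta_2}}_{H^{1/2}\times H^{-1/2}}=-\frac{|\xi|^2}{2}\int_\Om \gb_1(x)\,e^{-i\xi\cdot x}\,\d x=-\frac{|\xi|^2}{2}\,\cF\gb_1(\xi).
\end{equation*}
Since $\d\Phi_1(\gb_1)=\Lambda_\ga$ by assumption, solving for $\cF\gb_1(\xi)$ when $\xi\neq 0$ yields the stated formula.

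The only point requiring care is that $\gb_1$ may have a compactly supported distributional part, so the computation must be read through the extension of $\d\Phi_1$ to $L^\infty(\Om)+\cE'(\Om)$ of \Cref{r:extdiff}. This is harmless: the relevant test function $\nabla e_{\eta_1}\cdot\nabla e_{\eta_2}=(\eta_1\cdot\eta_2)e^{-i\xi\cdot x}$ is smooth (indeed entire), so the $\cE'\times\cC^\infty$ pairing is well defined, and splitting $\gb_1$ into its $L^\infty$ and $\cE'$ parts reproduces exactly the distributional Fourier transform $\cF\gb_1(\xi)$ as defined in the text. Thus the identity holds for every $\xi\in\IR^d\setminus\{0\}$, and I expect this bookkeeping to be the most delicate (but still routine) step.

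For the uniqueness conclusion I would invoke injectivity of the Fourier transform. Any $\gb_1\in L^\infty(\Om)+\cE'(\Om)$, extended by zero, is a compactly supported distribution, so $\cF\gb_1$ is continuous (entire, by Paley--Wiener). If two such Born approximations produce the same $\Lambda_\ga$, the formula forces their Fourier transforms to coincide on $\IR^d\setminus\{0\}$; by continuity they agree at the origin as well, hence everywhere, and Fourier inversion shows the two distributions are equal. No genuine obstacle is anticipated here.
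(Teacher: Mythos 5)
Your proof is correct and follows essentially the same route as the paper: since $\eta_j\cdot\eta_j=0$ the exponentials $e_{\eta_j}$ are exactly harmonic, so $u^1_{e_{\eta_j}}=e_{\eta_j}$, and plugging them into the characterization \eqref{e:formalbornka} of $\d\Phi_1$ together with $\eta_1\cdot\eta_2=-|\xi|^2/2$ gives the Fourier-transform identity, with the $\cE'$ part handled by replacing the integral with the duality pairing. The uniqueness conclusion via injectivity of the Fourier transform on compactly supported distributions (extending to $\xi=0$ by continuity) is exactly what the paper intends.
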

\begin{proof}
    Assume, for simplicity, that $\gb_1 \in L^\infty(\Om)+ L^1_c(\Om)$.
    Since $\eta_j \cdot \eta_j =0$ we notice that 
    \[
       -|\xi|^2 = (\eta_1+\eta_2)^2 = 2\eta_1 \cdot \eta_2,
    \]
    and that $-\Delta e^{\eta_j \cdot x} = 0$ for all $x\in \R^d$ and $j=1,2$. Therefore, since $\eta_1 \cdot \eta_2 = -|\xi|^2/2 $ and $\eta_1 + \eta_2 = -i\xi$ we have that
    \begin{equation*}
        \hp{e_{\eta_1}}{\Lambda_\gamma e_{\eta_2}}_{H^{1/2}\times H^{-1/2}} = \int_{\Om} \gb_1(x) \nabla e^{\eta_1 \cdot x} \cdot \nabla e^{\eta_2 \cdot x} dx = - \frac{|\xi|^2}{2} \int_{\Om}  \gb_1(x) e^{-ix\cdot\xi} \, \d x. 
    \end{equation*}
    The proof for $\gb_1 \in L^\infty(\Om)+ \cE'(\Om)$ is identical: one just needs to replace the integrals by the corresponding duality pairing.
\end{proof}

As expected, the case $\si \neq 1$ is more complicated, so we will study a special case. Since in the proof we require the reduction to the potential case via the mapping $\ga\longmapsto q(\ga)$ defined by \eqref{e:q_funct}, we need to assume that $\ga$ has two derivatives close to the boundary.

\begin{theorem}\label{thm:Born_d=2_uniq_Om}
    For $\ka\in \R$, assume there exists a conductivity  $\si_\ka \in \cC^\infty(\ol{\Om})$ such that\footnote{For example, if $\Om = \IB^d$, the existence of such $\si_\ka$ is guaranteed for  $\ka\in(-\infty,\lambda_{\nu_d,1}^2)$.} $q(\si_\ka) = -\ka$. Let $\ga\in W^{2,\infty}(\Om)$ be a conductivity such that $\ga|_{\IS^1}=a_\ga \si_\ka|_{\partial \Om}$ for some $a_\ga>0$. If $\gb_{\si_\ka}\in W^{2,\infty}(\Om)+\cE'(\Om)$ exists, that is,
    \begin{equation*}
        \d\Phi_{\si_\ka}(\gb_{\si_\ka})=\Lambda_{\ga},
    \end{equation*}
    then we have that
    \begin{equation} \label{e:trace_n}
        \gb_{\si_\ka}  |_{\p \Om} = \ga|_{\p \Om}, \qquad \partial_\nu\gb_{\si_\ka}  |_{\p \Om}=\partial_\nu\ga|_{\p \Om}. 
    \end{equation}
    In addition,  for every  $\xi \in \R^d$ one has
    \begin{equation}\label{e:ftgae}
    \left[\F \cL_{\si_\ka}(\gb_{\si_\ka})\right](\xi)  = 2a_\ga \hp{e_{\eta_1}}{(\Lambda^{\rm S}_{q(\ga)}-\Lambda^{\rm S}_{-\ka}) e_{\eta_2}}_{H^{1/2}\times H^{-1/2}},
    \end{equation}
    where $\eta_1,\eta_2 \in \IC^d$ are vectors satisfying $\eta_1+\eta_2 =-i\xi$ and $\eta_1 \cdot \eta_1 = \eta_2 \cdot \eta_2 = -\ka $.\\
    As a consequence, $\gb_{\si_\ka}$ is uniquely determined by $\La_\ga$ in $ W^{2,\infty}(\Om) +  \cE'(\Om)$.
\end{theorem}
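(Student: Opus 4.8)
The plan is to push the entire statement through the Schrödinger picture, where the constant potential $q(\si_\ka)=-\ka$ turns the complex exponentials $e_\eta$ into genuine solutions of the homogeneous equation, exactly as in the proof of \Cref{thm:Born_d=2_uniq_0_Om}. The reduction itself is already packaged in \Cref{p:bqisgb}: since $\si_\ka\in\cC^\infty(\ol{\Om})$ is a conductivity, $\ga\in W^{2,\infty}(\Om)$ satisfies $\ga|_{\p\Om}=a_\ga\si_\ka|_{\p\Om}$, and $\gb_{\si_\ka}\in W^{2,\infty}(\Om)+\cE'(\Om)$ exists by hypothesis, part ii) of that proposition applies verbatim. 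It delivers at once the trace identities \eqref{e:trace_n} and the existence of the potential Born approximation $q(\ga)^{\rm B}_{q(\si_\ka)}\in L^\infty(\Om)+\cE'(\Om)$, together with the relation
\[
    q(\ga)^{\rm B}_{q(\si_\ka)}-q(\si_\ka)=\tfrac{1}{2a_\ga}\,\cL_{\si_\ka}\gb_{\si_\ka}.
\]
Unwinding the definition \eqref{e:born_pot_def} and using $q(\si_\ka)=-\ka$, this is the operator identity $\d\Phi^{\rm S}_{-\ka}(\cL_{\si_\ka}\gb_{\si_\ka})=2a_\ga(\Lambda^{\rm S}_{q(\ga)}-\Lambda^{\rm S}_{-\ka})$.

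With this identity in hand I would prove \eqref{e:ftgae} by testing against complex geometrical optics solutions. For a fixed $\xi\in\R^d$ and $d\ge 2$ one can choose $\eta_1,\eta_2\in\C^d$ with $\eta_1+\eta_2=-i\xi$ and $\eta_j\cdot\eta_j=-\ka$; the whole point of the constant potential is that then $(-\Delta-\ka)e_{\eta_j}=(-\eta_j\cdot\eta_j-\ka)e_{\eta_j}=0$, so $e_{\eta_j}$ is its own solution of \eqref{e:schrod} with potential $-\ka$, i.e.\ $v^{-\ka}_{e_{\eta_j}}=e_{\eta_j}$ (here $0$ is not a Dirichlet eigenvalue of $-\Delta-\ka$ because $\sqrt{\si_\ka}>0$ is a positive solution). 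Pairing the operator identity above with $e_{\eta_1},e_{\eta_2}$ and inserting \eqref{e:frechp} gives
\[
    2a_\ga\hp{e_{\eta_1}}{(\Lambda^{\rm S}_{q(\ga)}-\Lambda^{\rm S}_{-\ka})e_{\eta_2}}_{H^{1/2}\times H^{-1/2}}=\int_{\Om} \cL_{\si_\ka}\gb_{\si_\ka}\, e^{\eta_1\cdot x}e^{\eta_2\cdot x}\,\d x,
\]
and since $e^{\eta_1\cdot x}e^{\eta_2\cdot x}=e^{-i\xi\cdot x}$ the right-hand side is precisely $[\F\cL_{\si_\ka}(\gb_{\si_\ka})](\xi)$, which is \eqref{e:ftgae}. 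When $\gb_{\si_\ka}$ carries a compactly supported singular part, the same computation is carried out with the duality extension \eqref{e:defdis} of $\d\Phi^{\rm S}_{-\ka}$ replacing the integral, exactly as in \Cref{thm:Born_d=2_uniq_0_Om}; this is legitimate because the $e_{\eta_j}$ are smooth on $\ol{\Om}$ and the relevant operators are bounded (and compact) by \Cref{l:comp}.

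Uniqueness would then follow by combining \eqref{e:ftgae} with an elliptic argument. Suppose $w_1,w_2\in W^{2,\infty}(\Om)+\cE'(\Om)$ both satisfy $\d\Phi_{\si_\ka}(w_j)=\Lambda_\ga$. The right-hand side of \eqref{e:ftgae} is fixed by $\ga$ alone (it involves only $\Lambda^{\rm S}_{q(\ga)}$, $\ka$ and $a_\ga$), so $[\F\cL_{\si_\ka}w_1](\xi)=[\F\cL_{\si_\ka}w_2](\xi)$ for every $\xi$, and by injectivity of $\F$ the difference $w:=w_1-w_2$ satisfies $\cL_{\si_\ka}w=0$. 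Since each $w_j$ plays the role of $\gb_{\si_\ka}$ in the theorem, the already-established \eqref{e:trace_n} gives them the same Cauchy data on $\p\Om$, so $w|_{\p\Om}=\p_\nu w|_{\p\Om}=0$. Setting $v:=w/\si_\ka$ turns the equation into $\Div(\si_\ka\nabla v)=0$; as $\si_\ka$ is smooth and positive this operator is hypoelliptic, so any compactly supported singular part of $w$ is smoothed away in the interior and $v\in\cC^\infty(\Om)\cap H^1(\Om)$ with $v|_{\p\Om}=0$. The energy identity $\int_{\Om}\si_\ka|\nabla v|^2\,\d x=0$ then forces $v\equiv 0$, i.e.\ $w_1=w_2$.

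I expect the genuinely delicate points to be twofold, both concerning the distributional component of $\gb_{\si_\ka}$. First, one must propagate the CGO computation consistently through the $\cE'$-part, using the extensions \eqref{e:defdis} and the mapping properties from \Cref{l:comp}, and check that extending $\cL_{\si_\ka}w$ by zero across $\p\Om$ produces no spurious boundary distribution; this is exactly where the matching Cauchy data \eqref{e:trace_n} enters. Second, in the uniqueness step the a priori presence of a compactly supported distributional part of $w$ must be eliminated by interior elliptic regularity before the energy identity can even be invoked—once hypoellipticity upgrades $v$ to a bona fide $\si_\ka$-harmonic function with zero Dirichlet trace, the conclusion is routine. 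By contrast, the CGO computation itself is short, precisely because the reduction in \Cref{p:bqisgb} and the constant-potential choice do all the heavy lifting.
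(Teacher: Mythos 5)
Your proposal is correct and follows essentially the same route as the paper: reduce to the Schrödinger picture via \Cref{p:bqisgb}~ii) to get the trace identities and the operator identity $\d\Phi^{\rm S}_{-\ka}(\cL_{\si_\ka}\gb_{\si_\ka})=2a_\ga(\Lambda^{\rm S}_{q(\ga)}-\Lambda^{\rm S}_{-\ka})$, test against the plane waves $e_{\eta_j}$ (which solve $(-\Delta-\ka)e_{\eta_j}=0$, hence $v^{-\ka}_{e_{\eta_j}}=e_{\eta_j}$) to obtain \eqref{e:ftgae}, and conclude uniqueness from injectivity of $\F$ plus interior elliptic regularity and uniqueness for the Dirichlet problem for $\cL_{\si_\ka}$. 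Your energy-identity ending is just a spelled-out version of the paper's appeal to uniqueness for the divergence-form equation with vanishing Cauchy data.
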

\begin{proof}
    Since $\d\Phi_{\si_\ka}(\gb_{\si_\ka})=\Lambda_{\ga}$ with $\gb_{\si_\ka}\in W^{2,\infty}(\Om) + \cE'(\Om)$ and $q(\si_\ka) = -\ka$, \Cref{p:bqisgb}~ii) implies that the identities \eqref{e:trace_n} hold and the Born approximation
 $q(\ga)^{\rm B}_{-\ka}\in \cE'(\Omega)+L^\infty(\Omega)$ exists and can be obtained by \eqref{e:born_pot}.
 
 Applying \eqref{e:born_pot_def} with $q_0 = -\ka$ we obtain
 \begin{equation*}
   \d \Phi^{\rm S}_{-\ka}\left( q^{\rm B}_{-\ka} + \ka  \right ) = \Lambda^{\rm S}_q - \Lambda^{\rm S}_{-\ka} ,
\end{equation*}
and using \eqref{e:frechp} and \eqref{e:born_pot} we get
\begin{equation*}
     2a_\ga  \hp{e_{\eta_1}}{(\Lambda^{\rm S}_{q(\ga)}-\Lambda^{\rm S}_{-\ka}) e_{\eta_2}}_{H^{1/2}\times H^{-1/2}} = \int_{\Om}  \cL_{\si_\ka}(\gb_{\si_\ka})(x) v_{e_{\eta_1}}^{-\ka}(x)   v_{e_{\eta_2}}^{-\ka}(x) \, \d x.
\end{equation*}
For any $\xi \in \R^d$, $d\ge 2$,  there always exist  $\eta_1,\eta_2 \in \IC^d$ such that $\eta_1+\eta_2=-i\xi$ and $\eta_1 \cdot \eta_1 = \eta_2 \cdot \eta_2 = -\ka $ hold. In particular, the last identity implies that $(-\Delta-\ka)e^{\eta_j \cdot x} =0$. 
Thus $v_{e_{\eta_j}}^{-\ka}(x) = e^{\eta_j \cdot x}$ on $\Om$ and
\begin{multline*}
     2a_\ga  \hp{e_{\eta_1}}{(\Lambda^{\rm S}_{q(\ga)}-\Lambda^{\rm S}_{-\ka}) e_{\eta_2}}_{H^{1/2}\times H^{-1/2}} = \int_{\Om}  \cL_{\si_\ka}(\gb_{\si_\ka})(x) e^{\eta_1 \cdot x}  e^{\eta_2 \cdot x} \, \d x
     \\ =  \left[\F \cL_{\si_\ka}(\gb_{\si_\ka})\right](\xi).
\end{multline*}
This proves that \eqref{e:ftgae} holds.

Now, suppose that $\ga$ has two Born approximations $w_1,w_2\in  W^{2,\infty}(\Om)+\cE'(\Om) $. Then $\Lambda_{\ga}= \d\Phi_{\si_\ka}(w_1)=\d\Phi_{\si_\ka}(w_2)$ and \eqref{e:trace_n}, \eqref{e:ftgae} show that that for $j=1,2$ $w_j|_{\p \Om}= \ga|_{\p \Om}$, $\p_\nu w_j|_{\p \Om}= \p_\nu\ga|_{\p \Om}$ and $\F \cL_{\si_\ka}(w_1)=\F \cL_{\si_\ka}(w_2)$. Injectivity of the Fourier transform on tempered distributions ensures that $\cL_{\si_\ka}(w_1) = \cL_{\si_\ka}(w_2)$, 
and therefore $\rho=w_1-w_2$ is a distribution in $W^{2,\infty}(\Om) + \cE'(\Om)$ that solves
\begin{equation*}
        \cL_{\si_\ka}\rho=0,\quad \text{on }\Om,\qquad \rho|_{\p \Om}=0,\quad \p_\nu \rho|_{\p \Om} =0.
\end{equation*} 
By elliptic regularity we know that $\rho \in \cC^\infty(\Om)$, so, in fact, $\rho \in W^{2,\infty}(\Om)$. Thus, since $\rho$ vanishes at the boundary,  we get that $\rho = 0$ identically in $\Om$. This finishes the proof of the theorem.
\end{proof}

\begin{remark} \label{r:uniq_frechet}\
\begin{enumerate}[i)]
    \item The previous theorem can be extended to show the unique determination of $\gb_\si$ for any conductivities $\si \in \cC^\infty(\ol{\Om})$ and $\ga \in W^{2,\infty}(\Om)$. It suffices to replace the plane waves $e_\eta$ by standard Complex Geometrical Optics solutions of the Schrödinger equation $(-\Delta+ q(\si))v = 0$.
    \item The proofs of \cref{thm:Born_d=2_uniq_0_Om} and \Cref{thm:Born_d=2_uniq_Om} can be easily adapted to show, respectively, that:
    \begin{itemize}
        \item If $ \d\Phi_{1}(\ga_1)  = \d\Phi_{1}(\ga_2) $ for $\ga_1,\ga_2 \in  L^\infty(\Om)+ \cE'(\Om)$, then $\ga_1 = \ga_2$ on $\Om$.
        \item If $ \d \Phi^{\rm S}_{-\ka}(q_1)  =  \d \Phi^{\rm S}_{-\ka}(q_2) $ for  $q_1,q_2 \in  L^\infty(\Om)+ \cE'(\Om)$, then $q_1 = q_2$ on $\Om$.
    \end{itemize}
    \end{enumerate}
\end{remark}

\subsection{Existence of the Born approximation in the radial case} \ \label{s:2_4}

In this section, we prove \Cref{t:existencegb}.
Here we consider $\Om= \IB^d$, so that $\p \Om = \p \IB^d = \IS^{d-1}$. We will use the notation $L^p_\rad(\IB^d)$ to refer to the  subspace of radial functions in $L^p(\IB^d)$.

If $\ga$ is a radial conductivity, then $\La_\ga$ is a rotation invariant operator on $L^2(\IS^{d-1})$. This is also the case for $\d \Phi_\sigma(\ga)$  when $\si$ and $\ga$ are radial,
and for $\La_{q}^{\rm S}$ and $\d \Phi^{\rm S}_{q_0}(q)$ with $q,q_0$ radial. If $T$ is any of the aforementioned operators, rotation invariance implies that $T$ is diagonal in the basis of spherical harmonics. In particular, $\forall m \in \N_0$ there exists a $\lambda_m[T] \in \IC$ such that
\begin{equation*}
    T |_{\mathfrak H_{m,d}} = \lambda_m[T] \Id|_{\mathfrak H_{m,d}} , 
\end{equation*}
where $\mathfrak H_{m,d} \subset \cC^\infty(\IS^{d-1})$ stands for the subspace of spherical harmonics of order $m$. One can verify by direct computation that $\la_0[\Lambda_\ga] =0$ for all conductivities $\ga \in L^\infty_\rad(\IB^d)$ and that $\la_m[\La_1] = \la_m[\La_0^{\mathrm{S}}] = m$ for all $m\in \N_0$.

Recall the notation.
\begin{equation*}
    \sigma_{\ka,d} (x)= \left(c_d\frac{J_{\nu_d}(\sqrt{\ka} |x|)}{(\sqrt{\ka}|x|)^{\nu_d}}\right)^2,\qquad \nu_d=\frac{d-2}{2},\quad c_d=\Gamma(\nu_d+1)2^{\nu_d}.
\end{equation*}
The function $\sigma_{\ka,d}\in\cC^\infty(\cc{\Omega})$ is an admissible (strictly positive) conductivity as soon as $\ka\in(-\infty,\lambda_{\nu_d,1}^2)$, where $\lambda_{\nu_d,1}$ stands for the first positive zero of the Bessel function $J_{\nu_d}$; the normalization constant $c_d$ ensures that $\sigma_{0,d}=1$.

\begin{proposition}\label{p:Born_exist_aux}
   Let $\ka\in(-\infty,\lambda_{\nu_d,1}^2)$ and $\ga \in W^{2,\infty}(\IB^d)$ be a conductivity, and let $q: = q(\ga)$ given by \eqref{e:q_funct} be radial.  Then there exists a radial function $q^{\rm B}_{-\ka} \in L^1(\IB^d,\R)$ such that
    \begin{equation*}
        \d\Phi^{\rm S}_{-\ka}(q^{\rm B}_{-\ka}+\ka)=\Lambda^{\rm S}_{q}-\Lambda^{\rm S}_{-\ka}.
    \end{equation*}
    In addition $q-q^{\rm B}_{-\ka} \in \cC(\ol{\IB^d}\setminus\{0\})$.
\end{proposition}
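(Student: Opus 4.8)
The plan is to use rotational symmetry to reduce the operator identity to a scalar moment problem, and then to produce an $L^1$ solution of that problem. \textbf{Step 1 (diagonalization).} Since $q$ and the constant potential $-\ka$ are radial, the operators $\La^{\rm S}_q$, $\La^{\rm S}_{-\ka}$ and $\d\Phi^{\rm S}_{-\ka}(G)$ (for radial $G$) are all rotation invariant, hence diagonal in the spherical harmonics $\mathfrak H_{m,d}$. Separating variables in \eqref{e:schrod} with $q_0=-\ka$, the degree-$m$ solution is $v^{-\ka}_{Y_m}=\psi_m(r)\,Y_m(\theta)$, where $\psi_m$ is the radial profile regular at the origin normalized by $\psi_m(1)=1$; explicitly $\psi_m(r)=J_{m+\nu_d}(\sqrt\ka\,r)/\big(r^{\nu_d}J_{m+\nu_d}(\sqrt\ka)\big)$, which reduces to $r^m$ when $\ka=0$ and to the modified Bessel version when $\ka<0$. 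Pairing \eqref{e:frechp} against an $L^2(\IS^{d-1})$-normalized $Y_m$ and using $d-1-2\nu_d=1$, the eigenvalue of $\d\Phi^{\rm S}_{-\ka}(G)$ on $\mathfrak H_{m,d}$ is $\int_0^1 G(r)\,\psi_m(r)^2\,r^{d-1}\,\d r$, while \eqref{e:Aless2} shows that the eigenvalue $\mu_m$ of $\La^{\rm S}_q-\La^{\rm S}_{-\ka}$ equals $\int_0^1 (q(r)+\ka)\,\psi_m(r)\,\psi^q_m(r)\,r^{d-1}\,\d r$, with $\psi^q_m$ the corresponding profile for $q$. Thus the proposition is equivalent to finding a real radial $G=q^{\rm B}_{-\ka}+\ka\in L^1$ solving
\[
\int_0^1 G(r)\,J_{m+\nu_d}(\sqrt\ka\,r)^2\,r\,\d r=\mu_m\,J_{m+\nu_d}(\sqrt\ka)^2,\qquad m\in\N_0.
\]
(The hypotheses $\ka<\lambda_{\nu_d,1}^2$ and $\ga$ a conductivity guarantee that $0$ is not a Dirichlet eigenvalue of $-\Delta-\ka$ or of $-\Delta+q$, so every object above is well defined.)

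\textbf{Step 2 (moment asymptotics).} Next I would record the size of $\mu_m$ as $m\to\infty$. Because $\ga\in W^{2,\infty}(\IB^d)$ forces $q\in L^\infty$, and because for large $m$ the centrifugal term concentrates $\psi_m,\psi^q_m$ near $r=1$ with both behaving like $r^m$ to leading order, the small-argument Bessel asymptotics give $\mu_m=O(1/m)$ (the leading term being governed by the boundary trace of $q+\ka$). This is exactly the decay rate compatible with an integrable density, and it is what drives the convergence of the reconstruction.

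\textbf{Step 3 (solving the moment problem).} This is the crux and the main obstacle. When $\ka=0$ the kernel is the monomial $r^{2m+d-1}$, and the substitution $s=r^2$ turns the problem into a shifted Hausdorff moment problem for $s\mapsto G(\sqrt s)$, whose $L^1$ solvability is the content of \cite{radial_cond}. For $\ka\neq0$ I would reduce to this case: expanding $J_{m+\nu_d}(\sqrt\ka\,r)^2=\sum_{k\ge0}c_{m,k}(\sqrt\ka\,r)^{2(m+\nu_d)+2k}$ writes the Bessel moments $\mu_m$ as an infinite combination, upper triangular in $m$, of the power moments $M_j:=\int_0^1 G(r)r^{2j+1}\,\d r$; inverting this triangular system recovers the $M_j$ from the $\mu_m$, after which the Hausdorff step produces $G$. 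Equivalently one may use the transmutation operator intertwining the Bessel operator of order $m+\nu_d$ at energy $\ka$ with the one at energy $0$, conjugating the $\ka\neq0$ kernels into the monomial ones. The delicate point is not the decay of Step 2 by itself (which does not force integrability), but constructing $G$ explicitly—morally as $q+\ka$ plus a smoothing integral transform of $q$—and proving the resulting series converges with a genuine $L^1$ bound, using $q\in L^\infty$ and the uniform control of the profiles. I expect essentially all the real work to sit here.

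\textbf{Step 4 (regularity and the continuity statement).} Realness and radiality of $G$ follow from those of the data $(\mu_m)$ together with uniqueness of the moment solution. For $q-q^{\rm B}_{-\ka}=(q+\ka)-G\in\cC(\ol{\IB^d}\setminus\{0\})$, I would compare the singular parts of $q+\ka$ and $G$ away from the origin. The moment identities force $\int_0^1\psi_m\big[(q+\ka)\psi^q_m-G\psi_m\big]r^{d-1}\d r=0$, and since $\psi^q_m-\psi_m$ is lower order for large $m$, the difference $(q+\ka)-G$ has rapidly decaying Bessel moments; interior elliptic regularity for $-\Delta-\ka$ (applied to the smoothing part of the construction) then upgrades this to continuity on $\ol{\IB^d}\setminus\{0\}$, the origin being excluded precisely because the radial inversion controls the profile there only up to an integrable singularity.
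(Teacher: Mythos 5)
Your Step 1 is correct and matches the paper's starting point: the scalar identity you derive is exactly the moment identity \eqref{e:mom_fix_en}, and the reduction to a radial moment problem in the profiles $\phi^\ka_m$ is how the paper proceeds. But the substance of the proposition is your Step 3, and there you have written a plan rather than a proof. The paper does not solve the moment problem from scratch either; it imports the solution from \cite{MMS-M25} (Theorems 1, 2 and 3.1), which already provide, for \emph{any} bounded radial potential $q$, a radial distribution $q^{\rm B}_{-\ka}\in\cE'_\rad(\IB^d)+L^1_\rad(\IB^d)$ satisfying \eqref{e:mom_fix_en}, together with the continuity of $q-q^{\rm B}_{-\ka}$ away from the origin (your Step 4). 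What remains --- and what the paper's proof is actually about --- is showing that the compactly supported distributional part at the origin is absent when $q=q(\ga)$ comes from a conductivity. Your proposal uses the conductivity hypothesis only to conclude $q\in L^\infty$, but boundedness alone is not enough: for a general bounded radial potential the Born approximation can genuinely carry a singular part supported at $0$. The paper's argument is that the conductivity structure yields the identity \cite[(3.13)]{radial_cond} for all $z\ge\nu_d$ (via \cite[Theorem~3.2 and Lemma~3.9]{radial_cond}), which allows taking $\ell_q=0$ in \cite[Proposition~4.6]{MMS-M25} and hence concluding $q^{\rm B}_{-\ka}\in L^1(\IB^d)$. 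This is the one idea your proposal is missing, and it is the only genuinely new content of the paper's proof.

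A secondary issue: your proposed route in Step 3 --- expanding $J_{m+\nu_d}(\sqrt\ka\,r)^2$ into powers of $r$ and inverting the resulting infinite triangular system to reduce to the Hausdorff case --- is not the mechanism used in \cite{MMS-M25}, and it is far from clear that such an inversion converges or that it sends the given data to an integrable function; you acknowledge this yourself (``I expect essentially all the real work to sit here''). Likewise, the appeal in Step 4 to interior elliptic regularity does not by itself yield $q-q^{\rm B}_{-\ka}\in\cC(\ol{\IB^d}\setminus\{0\})$; in the paper this is a separate structural result, \cite[Theorem~2]{MMS-M25}. As written, the proposal identifies the correct skeleton but leaves both the existence of an $L^1$ solution and the continuity statement unproven.
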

This proposition is based on results from  \cite{MMS-M25}. We postpone momentarily its proof in order to prove \Cref{t:existencegb} first.

\begin{proof}[Proof of \Cref{t:existencegb}]
    Direct computation shows that $q(\sigma_{\ka,d})=-\ka$ and that $q(\ga)\in L^\infty(\IB^d)$ is real and radial. 
    Then \Cref{p:Born_exist_aux} proves that the born approximation $q(\ga)^{\mB}_{-\ka}$ exists and belongs to the space $L^1_\rad(\IB^d)$.  
    In fact, since $q-q(\ga)^{\rm B}_{-\ka}  \in \cC(\IB^d\setminus\{0\})$ and $q(\ga)$ is bounded, we have that $q(\ga)^{\mB}_{-\ka} \in L^\infty(\IB^d) + L^1_c(\IB^d)$. 
    This last condition allows us to apply \Cref{p:bqisgb} i), which implies that  $\gb_{\si_{\ka,d}}\in W^{2,\infty}(\Omega)+ W^{1,1}_c(\Omega)$ exists.
    
    Moreover, $\gb_{\si_{\ka,d}}$ must be real and radial since it is obtained by solving \eqref{e:gbfromqb}, an elliptic, radially symmetric boundary value problem with real coefficients in $\IB^d$ with a constant real Dirichlet datum $ \gb_{\si_{\ka,d}}  |_{\partial\IB^d} = \ga|_{\partial\IB^d}$ (recall that  $\ga$ is radial). 
    As a consequence of \Cref{p:bqisgb}~(i) it also follows that  $\p_\nu \gb_{\si_{\ka,d}}  |_{\partial\IB^d} = \p_\nu \ga|_{\partial\IB^d}$. 
    The fact that $\gb_{\si_{\ka,d}}$ is unique in  $ W^{2,\infty}(\IB^d) + \cE'(\IB^d)$ is a consequence of \Cref{thm:Born_d=2_uniq_Om}.

    \textit{Case $\ka =0$.} There is alternative proof  that does not require the use of \Cref{p:Born_exist_aux} for the case $\ka=0$. 
    Let $\ell,m\in\IN$. If $f \in \gH_{\ell,d}$ and $g \in \gH_{m,d}$ are spherical harmonics  then for every 
    $\ga \in W^{2,1}_\rad(\IB^d)$, one has
    \begin{equation} \label{e:moments_ka0}
        \hp{\cc{f}}{\d \Phi_{1}(\ga) g}_{H^{1/2}\times H^{-1/2}} =   \ell(2\ell +d-2) \frac{\hp{f}{g}_{L^2(\IS^{d-1})}}{|\IS^{d-1}|}\int_{\IB^d} \gamma(x)|x|^{2\ell-2}\,\d x.
    \end{equation}
    This follows  from \eqref{e:formalbornka}, using that
        $\nabla u_f^1(x)= |x|^{\ell-1} \left(\ell f(\hat{x})\hat{x} + \nabla_{\IS^{d-1}} f(\hat{x})\right)$, where $\hat{x}:=x/|x|$, integrating by parts $\nabla_{\IS^{d-1}}$ and using that $-\Delta_{\IS^{d-1}} f = \ell(\ell +d-2)f$. 
    Taking above $g:=f$ with $\|f\|_{L^2(\IS^{d-1})}=1$ gives
    \begin{equation*}
        \la_\ell\left[\d \Phi_{1}(\ga) \right] =   \frac{ \ell(2\ell +d-2)}{|\IS^{d-1}|}\int_{\IB^d} \gamma(x)|x|^{2\ell-2}\,\d x.
    \end{equation*}
    Hence, \eqref{e:mainthm} holds iff, for all $\ell \in \N$
    \begin{equation*}
        \la_\ell\left[\La_\ga \right] =   \frac{ \ell(2\ell +d-2)}{|\IS^{d-1}|}\int_{\IB^d} \gb_1(x)|x|^{2\ell-2}\,\d x.
    \end{equation*}
    In \cite[Theorem 1]{radial_cond} it is proved that there exist a real and radial function $\gb_1 \in W^{2,1}(\IB^d)$ such that the identity above holds.
    This proves that \eqref{e:mainthm} holds for $\ka = 0$. In fact, is the unique solution in $L^\infty(\IB^d) +\cE'(\IB^d)$ by \Cref{thm:Born_d=2_uniq_0_Om}. The identity of the traces at the boundary and that  $\gb_1 \in \cC^1(\ol{\IB^d}\setminus\{0\})$  also follow from \cite[Theorem 1]{radial_cond}. This proves the theorem for $\ka=0$.
\end{proof}

\begin{proof}[Proof of \Cref{p:Born_exist_aux}]
    Since $q$ is radial, $\Lambda^{\rm S}_{q(\ga)}$ and $\Lambda^{\rm S}_{-\ka}$ are invariant by rotations and its eigenspaces are $\gH_{m,d}$ for all $m\in\IN_0$. 
    
    In \cite[Theorems 1 and 3.1]{MMS-M25}, it is shown that $\d\Phi^{\rm S}_{-\ka}$ extends to the space $\cB_d:=\cE'_\rad(\IB^d)+L^1_\rad(\IB^d)$ 
    and that for every $q\in L^\infty(\IB^d)$ one can find a radial distribution $q^{\rm B}_{-\ka}\in\cB_d$ such that $\d\Phi^{\rm S}_{-\ka}(q^{\rm B}_{-\ka}+\ka)=\Lambda^{\rm S}_{q}-\Lambda^{\rm S}_{-\ka}$. This is stated, specifically in \cite[Equation (1.15)]{MMS-M25}, which in our 
    notation\footnote{The potential in  \cite{MMS-M25} corresponds in our notation to $q+\ka$, and the DtN map $\La_{q,\ka}$ in \cite{MMS-M25} corresponds here to $\La_{q+\ka}^\mathrm{S}$.}
    reads
    \begin{equation} \label{e:mom_fix_en}
        \lambda_\ell\left[\d\Phi^{\rm S}_{-\ka}(q^{\rm B}_{-\ka}+\ka)\right] = \lambda_\ell\left[\Lambda^{\rm S}_{q}-\Lambda^{\rm S}_{-\ka}\right] \qquad \forall \ell \in \N_0.
    \end{equation} 
    The fact that the distribution $q^{\rm B}_{-\ka}$ is a function outside the origin and that $q-q^{\rm B}_{-\ka} \in \cC(\IB^d\setminus\{0\})$ it is proved in  \cite[Theorem 2]{MMS-M25}.
    
    It remains to prove that  $q = q(\ga)$ implies that $q^{\rm B}_{-\ka} \in L^1(\IB^d)$. 
    For $\ka=0$, this is proved in \cite[Proposition 2.1]{radial_cond}. The case $\ka\neq 0$ is essentially contained in the works \cite{MMS-M25,radial_cond}, though it is not stated explicitly in them. Here we outline the chain of arguments that leads to proving that $q^{\rm B}_{-\ka} \in L^1(\IB^d)$:
        \begin{itemize}
        \item Let $Q \in L^1_\loc(\R_+)$ such that $Q(-\log|x|)= q(x)|x|^2$. Since $q$ arises from a conductivity  \cite[Theorem 3.2 and Lemma 3.9]{radial_cond} imply that  \cite[Identity (3.13)]{radial_cond} holds for all $z\ge \nu_d$.
        \item The identity \cite[Identity (3.13)]{radial_cond} is the same as \cite[Equation (4.11)]{MMS-M25}, so, again, it holds for all $z\ge z_{Q} : =  \nu_d$ and $d\ge 2$ (in particular, in $d=2$ this implies that the assumption $z_Q>0$ in \cite[Equation (4.11)]{MMS-M25} can be relaxed  to $z_Q = 0$.)
        \item On the other hand, the conclusion of \cite[Lemma 4.3]{MMS-M25} holds with $z_\ka=0$ since $\ka< \la_{0,1}^2$.
        \item As a consequence of the two previous steps, the conclusion of \cite[Proposition 4.6]{MMS-M25} is valid with $\ell_q := \max(z_Q,z_\ka) -  \nu_d=0$ (the key here is that one can take $z_Q=\nu_d$ in \cite[Proposition 4.6]{MMS-M25} instead of the value of $z_Q$ given by \cite[Identity (4.12)]{MMS-M25}.). In particular, \cite[Identity (4.19)]{MMS-M25} holds for a function $q^s_\ka \in L^1(\IB^d)$. This identity is equivalent to \eqref{e:mom_fix_en} taking $q^{\rm B}_{-\ka} = q^s_\ka-\ka$, which proves that $q^{\rm B}_{-\ka} \in L^1(\IB^d)$.
    \end{itemize}
    This finishes the proof of the proposition.
\end{proof}

\section{The Born approximation of a conductivity on the unit Disk}\label{s:sec_disk}

 In this section, for $d=2$  and $\Om = \ID:=   \IB^2$, the unit disk, we explore the connection between the Born approximation and the solution to certain complex moment problems. In particular we derive explicit expressions for the matrix elements of the Fréchet differential $\d \Phi_{\si_\ka}(\ga)$ in the spherical harmonics that will be useful for the numerical reconstruction of the Born approximation in \Cref{s:numerical}.

We first need some notation and definitions. For $\ka\in (-\infty,\lambda_{0,1}^2)$, define
\begin{equation}
     \sigma_\ka(x) := \sigma_{\ka,2}(x)=J_0(\sqrt{\ka} |x|)^2,
\end{equation}
 for shortness.
Note that $\sigma_\ka\in \cC^\infty(\ol{\ID})$, $\sigma_0=1$ and, for $\ka<0$, one has $\sigma_\ka(x)=I_0(\sqrt{|\ka|} |x|)^2$.

We consider the standard $L^2$-orthonormal basis of spherical harmonics $\{e_\ell\}_{\ell\in\IZ}$ on $\IS^1 = \partial \ID$, defined by
\begin{equation*}
    e_\ell(e^{i\theta}):=\frac{e^{i\ell\theta}}{\sqrt{2\pi}}, \qquad \forall \ell\in\IZ.
\end{equation*}
With a slight abuse of notation, in Euclidean coordinates we will  write $e_{\ell}(\widehat{x})$, where $\widehat{x} = x/|x|$, for the $0$-homogeneous extension of $e_\ell$ from $\IS^1$ to $\ID\setminus\{0\}$.

 For $\ell\in\Z$,
we write,
\begin{equation} \label{e:phi_def}
        \phi^\ka_\ell(r):=\left\{ \begin{array}{ll}\dfrac{J_{|\ell|}(\sqrt{\ka}r)}{J_{|\ell|}(\sqrt{\ka})}, &\ka\neq 0,\\
        r^{|\ell|}, &\ka= 0.
        \end{array}\right.
\end{equation}

For every $\ell,m\in\IZ$ and $q\in L^1(\ID)+\cE'(\Om)$ we define the moments
\begin{equation} \label{e:mom_ka}
    \mu_{\ell,m}^\ka[q] := \int_{\ID} q(x)\phi^\ka_{\ell}(|x|)\phi^\ka_{m}(|x|)\overline{e_{\ell}(\widehat{x})}e_{m}(\widehat{x}) \, \d{x}.
\end{equation}

\subsection{The Born approximation and the complex moment problem when \texorpdfstring{$\kappa = 0$}{k=0}} \

 Here we fix $\ka=0$. When convenient, we will use the notation $z=x_1+i x_2$ and denote by $\d m(z)$ the Lebesgue area element on $\IC$.
One can verify that in complex notation $\mu_{\ell,m}^0[q]$ can be written as
\begin{equation} \label{e:cm}
    \mu_{\ell,m}^0[q]= \frac{1}{2\pi}\int_{\ID} q(z) \ol{z}^\ell z^m \, \d m(z), \qquad \forall \ell,m\in \N_0,
\end{equation}
since $\sqrt{2\pi}\phi^0_{\ell}(|x|)e_{\ell}(\widehat{x}) = z^\ell$ for $\ell \ge 0$.

Given a sequence $(M_{\ell,m})_{\ell,m \in \N_0}$ in $\IC$, the \textit{complex moment problem} in $\ID$ is the inverse problem that consists in finding a function $f$ or distribution such that
\begin{equation} \label{e:cmp} 
    \mu_{\ell,m}^0[f] = M_{\ell,m},\qquad \forall\ell,m \in \N_0.
\end{equation}

The following lemma allows us to interpret the matrix elements of $ \d \Phi_{1}(\gamma)$ and $\d \Phi_{0}^{\rm S}(q)$ as complex moments of $\ga$ and $q$, respectively.
\begin{lemma}\label{r:freemoments} 
     For every $\gamma\in L^\infty(\ID)+\cE'(\Om)$, it holds that
    \begin{equation*}
        \ell m\leq 0 \;\implies\;  \hp{\cc{e_{\ell}}}{\d \Phi_{1}(\gamma) e_m}_{H^{1/2}\times H^{-1/2}}=0,
    \end{equation*}
    and for every $\ell, m\in\N$
    \begin{align*}
            \hp{\cc{e_{\ell}}}{\d \Phi_{1}(\gamma) e_m}_{H^{1/2}\times H^{-1/2}}&= 2\ell m\mu_{\ell-1,m-1}^0[\ga],\\
        \hp{\cc{e_{\ell}}}{\d \Phi_{0}^{\rm S}(q) e_m}_{H^{1/2}\times H^{-1/2}} &= \mu_{\ell,m}^0[q].
    \end{align*}
\end{lemma}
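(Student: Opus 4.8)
The plan is to evaluate both pairings directly through the integral representations already at hand: formula \eqref{e:formalbornka} of \Cref{prop:Frech} for $\d\Phi_1$, and \eqref{e:frechp} for $\d\Phi_0^{\rm S}$. In both cases the background is flat, so the solution map $f\mapsto u_f^1$ (resp. $g\mapsto v_g^0$) is just the harmonic extension. Writing $z=x_1+ix_2$, the first task is to record these extensions explicitly: since the harmonic extension of $e_k$ equals $z^k/\sqrt{2\pi}$ for $k\ge 0$ and $\ol{z}^{\,|k|}/\sqrt{2\pi}$ for $k<0$, and since $\cc{e_\ell}=e_{-\ell}$, the extension $u_{\cc{e_\ell}}^1$ is antiholomorphic (a multiple of $\ol{z}^{\,\ell}$) when $\ell>0$, holomorphic (a multiple of $z^{|\ell|}$) when $\ell<0$, and constant when $\ell=0$; the same description applies to $u_{e_m}^1$ according to the sign of $m$.

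The second step turns the Euclidean inner product of gradients into Wirtinger derivatives. Using $\partial_{x_1}=\partial_z+\partial_{\ol z}$ and $\partial_{x_2}=i(\partial_z-\partial_{\ol z})$ one gets the identity $\nabla u\cdot\nabla v=2\left(\partial_z u\,\partial_{\ol z}v+\partial_{\ol z}u\,\partial_z v\right)$, which makes the required dichotomy transparent. When the two extensions have the same \emph{character} --- both holomorphic or both antiholomorphic --- each of the two products on the right has a vanishing factor, so the integrand is identically zero; with the signs above this is exactly the regime $\ell m\le 0$, together with the trivial cases $\ell=0$ or $m=0$ where one extension is constant and its gradient vanishes. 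This yields the first assertion. When $\ell,m>0$ the extensions have opposite character, $u_{\cc{e_\ell}}^1=\ol{z}^{\,\ell}/\sqrt{2\pi}$ and $u_{e_m}^1=z^m/\sqrt{2\pi}$, and the identity gives $\nabla u\cdot\nabla v=\frac{\ell m}{\pi}\,\ol{z}^{\,\ell-1}z^{m-1}$; substituting into \eqref{e:formalbornka} and recognizing $\frac{1}{2\pi}\int_\ID\ga\,\ol{z}^{\,\ell-1}z^{m-1}\,\d m(z)=\mu^0_{\ell-1,m-1}[\ga]$ through \eqref{e:cm} produces the factor $2\ell m\,\mu^0_{\ell-1,m-1}[\ga]$ of the second assertion.

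The third identity is even more direct, since \eqref{e:frechp} with $q_0=0$ involves no gradients: for $\ell,m\in\N$ one has $v_{\cc{e_\ell}}^0=\ol{z}^{\,\ell}/\sqrt{2\pi}$ and $v_{e_m}^0=z^m/\sqrt{2\pi}$, so the integral is literally $\frac{1}{2\pi}\int_\ID q\,\ol{z}^{\,\ell}z^m\,\d m(z)=\mu^0_{\ell,m}[q]$ by \eqref{e:cm}. Finally, I would observe that nothing forces $\ga$ or $q$ to be a function: every integrand involves only the polynomials $z^k,\ol{z}^{\,k}$, which are smooth on $\ol{\ID}$, so for $\ga,q\in\cE'(\Om)$ the same computations hold verbatim after replacing each integral by the distributional pairing, in accordance with the extensions of $\d\Phi_1$ and $\d\Phi_0^{\rm S}$ in \Cref{r:extdiff} and \eqref{e:defdis}. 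There is no real obstacle in this argument; the only point demanding care is the bookkeeping of conjugation and signs --- in particular, that pairing against $\cc{e_\ell}$ forces the antiholomorphic factor $\ol{z}^{\,\ell}$, which is precisely what generates $\ol{z}^{\,\ell-1}z^{m-1}$ and hence the moment $\mu^0_{\ell-1,m-1}$ rather than a mismatched index.
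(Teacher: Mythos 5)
Your proof is correct and follows essentially the same route as the paper's: explicit harmonic extensions $z^k/\sqrt{2\pi}$, $\ol{z}^{\,|k|}/\sqrt{2\pi}$ of the $e_k$, a pointwise computation of $\nabla u\cdot\nabla v$ (the paper writes $\nabla z^\ell=\ell z^{\ell-1}(1,i)$ directly where you use Wirtinger derivatives, a purely cosmetic difference), and substitution into \eqref{e:formalbornka}, \eqref{e:frechp} and \eqref{e:cm}. The sign bookkeeping and the distributional extension are handled correctly.
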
    
\begin{proof} 
    Writing  \eqref{e:formalbornka} in complex notation yields
    \begin{equation*}
    \hp{\cc{e_{\ell}}}{\d \Phi_{1}(\gamma) e_m}_{H^{1/2}\times H^{-1/2}} =  \int_\ID \ga(z)  \ol{\nabla u^1_{e_\ell}(z)} \cdot \nabla u^1_{e_m}(z)\, \d m(z). 
    \end{equation*}
    We have that $u_{e_\ell}^1(z) = (2\pi)^{-\frac12} z^\ell$ if $\ell \ge 0$, and $u_{e_\ell}^1(z) = (2\pi)^{-\frac12} \ol{z}^{|\ell|}$ if $\ell < 0$. By direct computation, $\nabla z^\ell = \ell z^{\ell-1} (1,i)$ if $\ell\ge 0$, and $\nabla \ol{z}^{|\ell|} = |\ell| \ol{z}^{|\ell|-1} (1,-i)$ if $\ell <0$. This implies that $\nabla \ol{u_{e_\ell}^1} \cdot \nabla u_{e_m}^1 = 0$ pointwise if $\ell m \le 0$, which proves the first statement. The second identity follows now directly. The proof of the last identity is also immediate combining \eqref{e:cm} and \eqref{e:Aless2}, and using that $v_{e_\ell}^0(z) = u_{e_\ell}^1(z)$.  
\end{proof}

As a consequence of the previous result, we conclude that the Born approximations $\gb_1$ and $\qe_{0}$, if they exist, are solutions of the complex moment problem. In fact, we have the following result. 
\begin{proposition}\label{prop:Born_d=2_uniq_0}
    Let $\ga\in L^\infty(\ID)$ be a conductivity. If $\gb_{1}\in L^\infty(\ID)+\cE'(\Om)$ exists, that is, $\d\Phi_{1}(\gb_{1})=\Lambda_{\ga}$, then $\gb_1$ is a solution of the moment problem
    \begin{equation}\label{e:cmpBorn}
    \mu_{\ell,m}^0[\gb_1] =  \frac{\hp{\cc{e_{\ell+1}}}{\Lambda_\gamma e_{m+1}}_{H^{1/2}\times H^{-1/2}}}{2(\ell+1)(m+1)}, \qquad \forall\ell,m \in \N_0.
    \end{equation}
    Let $q \in L^\infty(\ID,\IR) $ such that $0\notin \Spec_{H^1_0(\ID)}(-\Delta +q)$. If $\qe_0 \in L^\infty(\ID) +\cE'(\ID)$ exists, that is,
    $\d \Phi^{\rm S}_{0}(\qe_{0})= \Lambda^{\rm S}_q - \Lambda^{\rm S}_{0}$,
 then $\qe_0$ is a solution of the moment problem
    \begin{equation*}
    \mu_{\ell,m}^0[\qe_{0}] =  \hp{\cc{e_{\ell}}}{(\Lambda^{\rm S}_{q}-\Lambda^{\rm S}_{0}) e_m}_{H^{1/2}\times H^{-1/2}}, \qquad \forall\ell,m \in \N_0.
    \end{equation*}
\end{proposition}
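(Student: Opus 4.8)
The plan is to treat the statement as a direct corollary of \Cref{r:freemoments}: each part is obtained by substituting the defining relation of the corresponding Born approximation into the relevant moment identity of that lemma, followed by a routine reindexing. No new analytic ingredient is needed beyond the Fréchet-derivative formulas already established.

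For the conductivity case I would start from the second identity of \Cref{r:freemoments}, namely $\hp{\cc{e_\ell}}{\d\Phi_1(\ga)e_m}_{H^{1/2}\times H^{-1/2}}=2\ell m\,\mu_{\ell-1,m-1}^0[\ga]$ for $\ell,m\in\N$, applied to $\ga=\gb_1$. The hypothesis $\d\Phi_1(\gb_1)=\Lambda_\ga$ turns the left-hand side into $\hp{\cc{e_\ell}}{\Lambda_\ga e_m}_{H^{1/2}\times H^{-1/2}}$. Relabelling $\ell\mapsto\ell+1$ and $m\mapsto m+1$, so that the new indices run over $\N_0$, and dividing by $2(\ell+1)(m+1)$ (which never vanishes for $\ell,m\in\N_0$) yields \eqref{e:cmpBorn} verbatim.

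For the Schrödinger case I would use the last identity of \Cref{r:freemoments}, $\hp{\cc{e_\ell}}{\d\Phi_0^{\rm S}(q)e_m}_{H^{1/2}\times H^{-1/2}}=\mu_{\ell,m}^0[q]$, applied to $q=\qe_0$, and insert the hypothesis $\d\Phi_0^{\rm S}(\qe_0)=\Lambda^{\rm S}_q-\Lambda^{\rm S}_0$ on the left; this gives the claimed moment identity at once for $\ell,m\in\N$. The one point requiring care, and the closest thing to an obstacle, is that the proposition asserts the identity for all $\ell,m\in\N_0$, while \Cref{r:freemoments} is phrased only for $\ell,m\in\N$. To cover the boundary indices $\ell=0$ and $m=0$ I would simply rerun the short computation behind that lemma: since $e_0$ is constant, its $0$-harmonic extension is $v_{e_0}^0\equiv(2\pi)^{-1/2}$, and more generally $v_{\cc{e_\ell}}^0=(2\pi)^{-1/2}\cc{z}^\ell$, $v_{e_m}^0=(2\pi)^{-1/2}z^m$ for all $\ell,m\ge 0$; then \eqref{e:frechp} gives $\hp{\cc{e_\ell}}{\d\Phi_0^{\rm S}(q)e_m}_{H^{1/2}\times H^{-1/2}}=\tfrac{1}{2\pi}\int_\ID q(z)\cc{z}^\ell z^m\,\d m(z)=\mu_{\ell,m}^0[q]$ by \eqref{e:cm}, which is valid at the boundary indices too. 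In the distributional case $\qe_0\in L^\infty(\ID)+\cE'(\ID)$ one replaces the integrals by the $\cE'\times\cC^\infty$ duality pairing exactly as in \eqref{e:defdis} and in the corresponding extension of $\mu_{\ell,m}^0$. In short, both parts reduce to substitution and bookkeeping, the sole non-automatic step being the extension of the Schrödinger moment identity to $\ell=0$ and $m=0$, settled by the explicit harmonic extensions above.
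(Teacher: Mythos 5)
Your proof is correct and follows essentially the same route as the paper, which simply observes that both identities are immediate from \Cref{r:freemoments}. Your extra care in extending the Schrödinger moment identity to the boundary indices $\ell=0$ or $m=0$ (via the explicit harmonic extensions $v^0_{e_\ell}=(2\pi)^{-1/2}z^\ell$, valid for all $\ell\ge 0$) addresses a point the paper glosses over, but it is the same computation underlying the lemma itself.
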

\begin{proof}
   The first statement is immediate from \Cref{r:freemoments}. For the potential case, notice that $0 \notin \Spec_{H^1_0(\ID)}(-\Delta +q)$ is required so that $\La_q$ is well defined. Then it also follows from \Cref{r:freemoments}.
\end{proof}

\subsection{Connection with moment problems in the general case} \

As expected, the connection with other moment problems is not so simple in the case $\ka \neq 0$. The main difficulty is that the moment problems associated to the potential and to the conductivity cases are not the same any more. The moments associated to the potential case are the ones defined in \eqref{e:mom_ka}, while the moments for the conductivity case have a more complicated expression. Nonetheless, it is worth obtaining an explicit expression for them since it will be useful for the numerical reconstruction of the Born approximation. 

First, for $\ga \in L^\infty(\ID)+ \cE'(\ID)$ we define the moments
\begin{equation*}
    \gm_{\ell,m}^\ka[\ga] : = \hp{\cc{e_{\ell}}}{\d \Phi_{\sigma_\ka}(\gamma) e_m}_{H^{1/2}\times H^{-1/2}}.
\end{equation*}

\begin{lemma}\label{l:moment_formula}
    The following assertions hold.
    \begin{enumerate}[i)]
    \item 
        Recall the definition of $\mu_{\ell,m}^\ka$ in \eqref{e:mom_ka}. For every $q\in L^\infty(\ID)  + \cE'(\Om)$,  one has
    \begin{equation*} 
    \hp{\cc{e_{\ell}}}{\d \Phi_{-\ka}^{\rm S}(q) e_m}_{H^{1/2}\times H^{-1/2}} = \mu_{\ell,m}^\ka[q].  
    \end{equation*}
    \item
    For every $\gamma\in L^\infty(\ID) + \cE'(\Om)$, $\ka\in (-\infty,\lambda_{0,1}^2)$, and $\ell,m\in\IZ$ one has
    \begin{multline} \label{e:mom_ga_def}
        \gm_{\ell,m}^\ka[\ga]  \\= J_0(\sqrt{\ka})^2 \! \!\int_{\ID}\frac{\gamma(x)}{\sigma_\ka(x)}\frac{\ell m+(|\ell| + \psi^\ka_\ell(|x|))(|m|+\psi^\ka_m(|x|))}{|x|^2}\phi^\ka_\ell(|x|)\phi^\ka_m(|x|)\overline{e_{\ell}(\widehat{x})}e_{m}(\widehat{x}) \,\d{x},   
    \end{multline}
    where $\widehat{x} = x/|x|$ and
    \begin{equation*}
        \psi_\ell^\ka(r):=\sqrt{\ka}r\left(\frac{J_{1}(\sqrt{\ka}r)}{J_0(\sqrt{\ka}r)}-\frac{J_{|\ell| + 1}(\sqrt{\ka}r)}{J_{|\ell|}(\sqrt{\ka}r)}\right),   \quad r>0,\qquad  \psi_\ell^\ka(0):=\lim_{r\to 0^+}\psi_\ell^\ka(r)=0. 
    \end{equation*} 
\end{enumerate}
\end{lemma}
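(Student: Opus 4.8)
The plan is to reduce both identities to explicit computations with the separated‑variable solutions of the Helmholtz and conductivity equations associated with the spherical harmonics $e_\ell$, and then to substitute these into the representation formulas \eqref{e:frechp} and \eqref{e:formalbornka} for the two Fréchet differentials. Since all the operators involved are rotation invariant, everything is diagonal in the angular index and reduces to understanding the radial profiles, which are ratios of Bessel functions. For part i), the point is that the $(-\ka)$‑harmonic extension of $e_\ell$ is
\[
    v^{-\ka}_{e_\ell}(x) = \phi^\ka_\ell(|x|)\,e_\ell(\widehat{x}).
\]
Indeed, separating variables in $(-\Delta-\ka)v=0$ forces the radial factor to solve Bessel's equation of order $|\ell|$, and the solution regular at the origin with value $1$ on $\IS^1$ is exactly $\phi^\ka_\ell$ as defined in \eqref{e:phi_def} (the monomial $r^{|\ell|}$ when $\ka=0$). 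Because $\cc{e_\ell}=e_{-\ell}$ and $\phi^\ka_{-\ell}=\phi^\ka_\ell$, taking $f=\cc{e_\ell}$, $g=e_m$ in \eqref{e:frechp} and comparing with the definition \eqref{e:mom_ka} of $\mu^\ka_{\ell,m}$ yields i) at once; for $q\in\cE'(\ID)$ one replaces the integral by the distributional pairing \eqref{e:defdis}, which is legitimate since $v^{-\ka}_{e_\ell}$ is smooth in the interior.

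For part ii) I would first use the conductivity–Schrödinger correspondence preceding \eqref{e:reldtn}. Since $q(\sigma_\ka)=-\ka$ and $\sqrt{\sigma_\ka}|_{\IS^1}=J_0(\sqrt{\ka})$ (note $J_0(\sqrt{\ka}r)>0$ for $\ka<\lambda_{0,1}^2$ and $r\le1$, so $\sqrt{\sigma_\ka(x)}=J_0(\sqrt{\ka}|x|)$), the function $\sqrt{\sigma_\ka}\,u^{\sigma_\ka}_{e_\ell}$ solves the Helmholtz problem with boundary datum $J_0(\sqrt{\ka})\,e_\ell$, whence by part i)
\[
    u^{\sigma_\ka}_{e_\ell}(x)=\frac{J_0(\sqrt{\ka})}{\sqrt{\sigma_\ka(x)}}\,\phi^\ka_\ell(|x|)\,e_\ell(\widehat{x})=:g_\ell(|x|)\,e_\ell(\widehat{x}).
\]
Writing $\nabla$ in polar coordinates and using $\partial_\theta e_\ell(\widehat{x})=i\ell\,e_\ell(\widehat{x})$ together with the orthogonality of the radial and angular unit vectors, a short computation gives, with $r=|x|$,
\[
    \nabla u^{\sigma_\ka}_{\cc{e_\ell}}\cdot\nabla u^{\sigma_\ka}_{e_m}
    =\cc{e_\ell(\widehat{x})}\,e_m(\widehat{x})\left(g_\ell'(r)g_m'(r)+\frac{\ell m}{r^2}\,g_\ell(r)g_m(r)\right),
\]
the angular cross terms cancelling. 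Substituting into \eqref{e:formalbornka} with $f=\cc{e_\ell}$, $g=e_m$ then expresses $\gm^\ka_{\ell,m}[\ga]$ as a radial integral against $\ga$.

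The heart of the argument, and the step I expect to be the main obstacle, is the identity
\[
    g_\ell'(r)=g_\ell(r)\,\frac{|\ell|+\psi^\ka_\ell(r)}{r},
\]
which is precisely where the function $\psi^\ka_\ell$ is produced. I would compute $g_\ell'/g_\ell=(\log g_\ell)'$ from $g_\ell(r)=J_0(\sqrt{\ka})\,J_{|\ell|}(\sqrt{\ka}r)\big/\big(J_{|\ell|}(\sqrt{\ka})J_0(\sqrt{\ka}r)\big)$ using the recurrence $J_n'(z)=\tfrac{n}{z}J_n(z)-J_{n+1}(z)$ and $J_0'(z)=-J_1(z)$; the $\tfrac{|\ell|}{z}$ term yields the $|\ell|/r$ contribution while the $J_{n+1}/J_n$ and $J_1/J_0$ terms assemble exactly into $\psi^\ka_\ell(r)/r$. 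Some care is needed to verify that the apparent singularities of $g_\ell'/g_\ell$ and of $\psi^\ka_\ell$ at $r=0$ are removable (they cancel against the $r^{|\ell|}$ behaviour of $\phi^\ka_\ell$), and to treat $\ka=0$ as the limiting case, where $\phi^0_\ell(r)=r^{|\ell|}$, $\psi^0_\ell\equiv0$ and $J_0(\sqrt{\ka})^2\to1$, recovering the $\ka=0$ formula of \Cref{r:freemoments}. Once this identity is established one has $g_\ell'g_m'=g_\ell g_m(|\ell|+\psi^\ka_\ell)(|m|+\psi^\ka_m)/r^2$ and $g_\ell g_m=J_0(\sqrt{\ka})^2\,\phi^\ka_\ell\phi^\ka_m/\sigma_\ka$, and collecting these factors gives \eqref{e:mom_ga_def}. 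As in part i), the case $\ga\in\cE'(\ID)$ follows by replacing the integral with the pairing of \Cref{r:extdiff}, the integrand being smooth in the interior of $\ID$.
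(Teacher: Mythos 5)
Your proposal is correct and follows essentially the same route as the paper: part i) via the explicit Helmholtz extension $v^{-\ka}_{e_\ell}=\phi^\ka_\ell(|x|)e_\ell(\widehat{x})$ plugged into \eqref{e:frechp}, and part ii) via the Liouville identity $u^{\sigma_\ka}_{e_\ell}=\frac{J_0(\sqrt{\ka})}{J_0(\sqrt{\ka}|x|)}\phi^\ka_\ell(|x|)e_\ell(\widehat{x})$ together with the Bessel recurrence $J_n'(z)=\tfrac{n}{z}J_n(z)-J_{n+1}(z)$, which is exactly how $\psi^\ka_\ell$ arises in the paper. The only cosmetic difference is that you expand $\nabla u_{\cc{e_\ell}}\cdot\nabla u_{e_m}$ in real polar components (producing $g_\ell'g_m'+\ell m\,g_\ell g_m/r^2$), whereas the paper uses the complex derivatives $\p_\pm$; the two computations are equivalent.
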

As an immediate consequence of this lemma we obtain the following proposition.
\begin{proposition}\label{prop:Born_d=2_uniq}
 Let $\ka\in (-\infty,\lambda_{0,1}^2)$ and $\ga\in W^{2,\infty}(\ID)$ be a conductivity such that $\ga|_{\IS^1}=a_\ga \sigma_\ka|_{\IS^1}$ for $a_\ga>0$. If $\gb_{\sigma_\ka}\in W^{2,\infty}(\ID)+\cE'(\Om)$ exists, that is,
        $\d\Phi_{\sigma_\ka}(\gb_{\sigma_\ka})=\Lambda_{\ga}$,
    then $\gb_{\si_\ka}$ is a solution of the moment problem
    \begin{equation*} 
       \gm_{\ell,m}^\ka[\gb_{\si_\ka}] =  \hp{\cc{e_{\ell}}}{\La_\ga e_m}_{H^{1/2}\times H^{-1/2}} .
    \end{equation*}
    In addition, let $q \in L^\infty(\ID,\IR)$ such that $0\notin \Spec_{H^1_0(\ID)}(-\Delta +q)$. \\
    If $\qe_{-\ka} \in L^\infty(\ID) +\cE'(\Om)$ exists, that is,
    $\d \Phi^{\rm S}_{-\ka}(\qe_{-\ka}+\ka)= \Lambda^{\rm S}_q - \Lambda^{\rm S}_{-\ka}$,
 then $\qe_{-\ka}$ is a solution of the moment problem
    \begin{equation*}
    \mu_{\ell,m}^\ka[\qe_{-\ka}+\ka] =  \hp{\cc{e_{\ell}}}{(\Lambda^{\rm S}_{q}-\Lambda^{\rm S}_{-\ka}) e_m}_{H^{1/2}\times H^{-1/2}} .
    \end{equation*}
\end{proposition}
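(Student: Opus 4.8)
The plan is to obtain both statements as immediate corollaries, by unwinding the definition of the moments together with the defining property of each Born approximation; the only external input needed is \Cref{l:moment_formula}~i), and no further analysis is required once that lemma is in hand.

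First I would treat the conductivity case, which in fact follows from the mere \emph{definition} of the moments $\gm^\ka_{\ell,m}$, without invoking the explicit integral formula. By definition,
\[
    \gm_{\ell,m}^\ka[\gb_{\si_\ka}] = \hp{\cc{e_\ell}}{\d \Phi_{\si_\ka}(\gb_{\si_\ka}) e_m}_{H^{1/2}\times H^{-1/2}}.
\]
Before substituting, I would record that this pairing is meaningful: since $\si_\ka\in\cC^\infty(\ol{\ID})$ and $\gb_{\si_\ka}\in W^{2,\infty}(\ID)+\cE'(\Om)\subset L^\infty(\ID)+\cE'(\Om)$, the extended differential $\d\Phi_{\si_\ka}$ of \Cref{r:extdiff} acts on $\gb_{\si_\ka}$, so the right-hand side is well defined. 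The existence hypothesis for the Born approximation is precisely $\d\Phi_{\si_\ka}(\gb_{\si_\ka})=\La_\ga$; inserting this identity gives $\gm_{\ell,m}^\ka[\gb_{\si_\ka}] = \hp{\cc{e_\ell}}{\La_\ga e_m}_{H^{1/2}\times H^{-1/2}}$, which is the asserted moment problem.

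Next I would handle the Schrödinger case in the same spirit, now using \Cref{l:moment_formula}~i). Since $\qe_{-\ka}\in L^\infty(\ID)+\cE'(\Om)$ we have $\qe_{-\ka}+\ka\in L^\infty(\ID)+\cE'(\Om)$, so the lemma applies with $q=\qe_{-\ka}+\ka$ and yields
\[
    \mu^\ka_{\ell,m}[\qe_{-\ka}+\ka]=\hp{\cc{e_\ell}}{\d\Phi^{\rm S}_{-\ka}(\qe_{-\ka}+\ka)e_m}_{H^{1/2}\times H^{-1/2}}.
\]
The hypothesis of existence of $\qe_{-\ka}$ is exactly $\d\Phi^{\rm S}_{-\ka}(\qe_{-\ka}+\ka)=\La^{\rm S}_q-\La^{\rm S}_{-\ka}$; substituting it produces $\mu^\ka_{\ell,m}[\qe_{-\ka}+\ka]=\hp{\cc{e_\ell}}{(\La^{\rm S}_q-\La^{\rm S}_{-\ka})e_m}_{H^{1/2}\times H^{-1/2}}$, the second claim. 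The assumption $0\notin\Spec_{H^1_0(\ID)}(-\Delta+q)$ enters only to ensure that $\La^{\rm S}_q$, and hence the right-hand side, is defined.

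I expect essentially no obstacle here: all of the substantive computation has already been absorbed into \Cref{l:moment_formula}, which identifies the matrix elements of $\d\Phi^{\rm S}_{-\ka}$ in the spherical-harmonic basis with the moments $\mu^\ka_{\ell,m}$ (part i), while its explicit integral formula for $\gm^\ka_{\ell,m}$ (part ii) is reserved for the numerical reconstruction rather than for this corollary. The only points deserving a moment's care are the membership checks guaranteeing that the two moment functionals are defined on the relevant class $L^\infty(\ID)+\cE'(\Om)$; these are immediate from the inclusions noted above and from $\si_\ka\in\cC^\infty(\ol{\ID})$, which is what makes the extended differentials available.
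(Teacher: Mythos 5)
Your proposal is correct and matches the paper's route exactly: the paper gives no separate proof, stating the proposition as an immediate consequence of \Cref{l:moment_formula}, and your argument simply spells this out — the conductivity case from the definition of $\gm^\ka_{\ell,m}$ together with $\d\Phi_{\si_\ka}(\gb_{\si_\ka})=\La_\ga$, and the Schrödinger case from \Cref{l:moment_formula}~i) together with $\d\Phi^{\rm S}_{-\ka}(\qe_{-\ka}+\ka)=\La^{\rm S}_q-\La^{\rm S}_{-\ka}$. The added membership checks are a harmless (and welcome) bit of extra care.
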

For the proof of \Cref{l:moment_formula} we need some preliminary results.

For shortness, we use $u^{\gamma}_\ell:=u^{\ga}_{e_\ell}$ and $v^q_\ell := v^q_{e_\ell}$, respectively, to denote the solution of \eqref{e:conduct} and \eqref{e:schrod} on $\Omega=\ID$ with boundary datum $e_\ell\in\cC^\infty(\partial\ID)$.
\begin{lemma}
    For every $\ell\in \IZ$ and $\ka\in (-\infty, \lambda_{0,1}^2)$ one has:
    \begin{equation}\label{e:ssol}
        u_\ell^{\ka}(re^{i\theta}):=u_\ell^{\sigma_\ka}(re^{i\theta})= \frac{\phi^\ka_{\ell}(r)}{ \phi^\ka_0(r)} e_\ell(e^{i\theta}).
    \end{equation}
\end{lemma}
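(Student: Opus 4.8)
The plan is to avoid a direct computation with the conductivity operator $-\nabla\cdot(\sigma_\ka\nabla\,\cdot)$ and instead exploit the reduction to a Schrödinger equation already set up in the excerpt. Recall from the discussion around \eqref{e:reldtn} that $u$ solves \eqref{e:conduct} with conductivity $\sigma_\ka$ and datum $f$ if and only if $\sqrt{\sigma_\ka}\,u$ solves \eqref{e:schrod} with potential $q(\sigma_\ka)$ and datum $\sqrt{\sigma_\ka}\,f$, and that $q(\sigma_\ka)=-\ka$. It therefore suffices to show that the candidate $w_\ell:=(\phi^\ka_\ell/\phi^\ka_0)\,e_\ell$, i.e.\ the right-hand side of \eqref{e:ssol}, solves the conductivity problem with datum $e_\ell$; by uniqueness of \eqref{e:conduct} this forces $u_\ell^\ka=w_\ell$. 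By the reduction, this in turn is equivalent to checking that $v_\ell:=\sqrt{\sigma_\ka}\,w_\ell$ solves $(-\Delta-\ka)v_\ell=0$ in $\ID$ with boundary datum $\sqrt{\sigma_\ka}\,e_\ell$.

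Computing $v_\ell$ is straightforward. Since $\ka<\lambda_{0,1}^2$, one has $\sqrt{\sigma_\ka(x)}=J_0(\sqrt{\ka}|x|)=J_0(\sqrt{\ka})\,\phi^\ka_0(|x|)$, which is strictly positive on $\overline{\ID}$; in particular $\phi^\ka_0$ never vanishes, so $w_\ell$ is well defined, and the quotient $\phi^\ka_\ell/\phi^\ka_0$ behaves like $r^{|\ell|}$ near the origin, giving $w_\ell$ a removable singularity there. Substituting and cancelling the $\phi^\ka_0$ factor,
\begin{equation*}
v_\ell(re^{i\theta})=J_0(\sqrt{\ka})\,\phi^\ka_\ell(r)\,e_\ell(e^{i\theta})=\frac{J_0(\sqrt{\ka})}{\sqrt{2\pi}\,J_{|\ell|}(\sqrt{\ka})}\,J_{|\ell|}(\sqrt{\ka}\,r)\,e^{i\ell\theta},
\end{equation*}
a constant multiple of $J_{|\ell|}(\sqrt{\ka}r)e^{i\ell\theta}$. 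I would then verify by separation of variables that this is $(-\Delta-\ka)$-harmonic: writing the two-dimensional Laplacian in polar coordinates and applying it to $R(r)e^{i\ell\theta}$, the equation $(-\Delta-\ka)(R\,e^{i\ell\theta})=0$ reduces, after the change of variables $s=\sqrt{\ka}r$, to Bessel's equation of order $|\ell|$, which $R(r)=J_{|\ell|}(\sqrt{\ka}r)$ solves by definition (for $\ka<0$ the computation is identical, with $J_{|\ell|}(\sqrt{\ka}\,\cdot)=I_{|\ell|}(\sqrt{|\ka|}\,\cdot)$ solving the modified Bessel equation). Evaluating at $r=1$ gives $v_\ell|_{\IS^1}=J_0(\sqrt{\ka})e_\ell=\sqrt{\sigma_\ka}\,e_\ell|_{\IS^1}$, because $\phi^\ka_\ell(1)=1$, so the boundary datum matches.

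Finally, uniqueness is where the hypothesis $\ka<\lambda_{0,1}^2$ really enters: since $\lambda_{0,1}^2$ is the smallest Dirichlet eigenvalue of $-\Delta$ on $\ID$, the operator $-\Delta-\ka$ is positive on $H^1_0(\ID)$, so $0$ is not in its Dirichlet spectrum and $v_\ell$ is the unique solution of the Helmholtz problem; equivalently, $\sigma_\ka$ is an admissible conductivity and \eqref{e:conduct} is uniquely solvable, whence $u_\ell^\ka=w_\ell$. The case $\ka=0$ is the classical harmonic one, $u_\ell^0=r^{|\ell|}e_\ell$, recovered either directly or as the limit $\ka\to0$, consistent with the convention $\phi^0_\ell(r)=r^{|\ell|}$. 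There is no genuine obstacle here beyond bookkeeping; the one point requiring care is tracking the normalizing constants — the factor $J_0(\sqrt{\ka})$ coming from $\sqrt{\sigma_\ka}$ and the factor $J_{|\ell|}(\sqrt{\ka})$ hidden in $\phi^\ka_\ell$ — so that both the equation and the boundary datum come out correctly normalized.
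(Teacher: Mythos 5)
Your proof is correct and follows essentially the same route as the paper: both pass through the Liouville substitution $w_\ell=\sqrt{\sigma_\ka}\,u_\ell^{\sigma_\ka}$, use $\Delta\sqrt{\sigma_\ka}=-\ka\sqrt{\sigma_\ka}$ to reduce to the Helmholtz problem $(-\Delta-\ka)w_\ell=0$ with datum $J_0(\sqrt{\ka})e_\ell$, and identify the solution with $J_0(\sqrt{\ka})\phi^\ka_\ell(r)e_\ell$. The only cosmetic difference is that you verify the Bessel separation of variables directly (and spell out why $\ka<\lambda_{0,1}^2$ gives uniqueness), whereas the paper cites \cite[Lemma~2.1]{MMS-M25} for the explicit form of $v_\ell^{-\ka}$.
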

\begin{proof}
    Let $w_\ell^\ka:=\sqrt{\sigma_\ka}u_\ell^{\sigma_\ka}$; then, noting that $\Delta\sqrt{\sigma_\ka}=-\ka\sqrt{\sigma_\ka}$ one gets 
    \begin{equation*}
    0= - \nabla \cdot ( \sigma_\ka \nabla ( \sigma_\ka^{-1/2} w_\ell^\ka)) = \sqrt{\sigma_\ka}(-\Delta - \ka) w_\ell^\ka ,\quad \text{ on }\ID,
    \end{equation*}
    and
    \begin{equation*}
        w_\ell^\ka(e^{i\theta}) = J_0(\sqrt{\ka})e_\ell(e^{i\theta})
    \end{equation*}
    This forces $w_\ell^\ka(x)=J_0(\sqrt{\ka})v^{-\ka}_{\ell}(x)$.
    Recall the definition of $\phi^\ka_\ell$ in \eqref{e:phi_def}. Note right away that, for every $\ka\in (-\infty, \lambda_{0,1}^2)$ and $\ell\in\IZ$, 
    \begin{equation} \label{e:v_ka}
        v_\ell^{-\ka}(re^{i\theta}) = \phi^\ka_\ell(r)e_\ell(e^{i\theta}),
    \end{equation}
(see \cite[Lemma~2.1]{MMS-M25} for a proof). This finishes the proof of the lemma.
\end{proof}
\begin{proof}[Proof of \Cref{l:moment_formula}]
     $i)$ Follows from \eqref{e:frechp} using that $v^{-\ka}_{e_\ell}(x) = v^{-\ka}_\ell(x) =  \phi^\ka_\ell(|x|) e_\ell(\widehat{x})$ by \eqref{e:v_ka}.
    We now prove  $ii)$.
    Denote the complex derivatives as $\p_+= \cp$ and $\p_-=\p$. Using 
    \begin{equation*}
        \p_\pm=\frac{1}{2}e^{\pm i\theta}\left(\partial_r \pm \frac{i}{r}\,\partial_\theta\right),
    \end{equation*}
    one gets by direct computation on \eqref{e:ssol} using elementary properties of Bessel functions,
    \begin{equation*}
        \p_\pm u^\ka_\ell(re^{i\theta}) = e^{\pm i\theta}\frac{\psi^\ka_\ell(r)+|\ell|\mp \ell}{2r}u^\ka_\ell(re^{i\theta}).
    \end{equation*}
    The result follows from \Cref{prop:Frech} and the identity 
    \begin{equation*}
        \ol{\nabla u_\ell^\ka}\cdot\nabla u_m^\ka = 2(\ol{\p_+ u_\ell^\ka}\p_+ u_m^\ka + \ol{\p_- u_\ell^\ka}\p_- u_m^\ka). \qedhere
    \end{equation*}
\end{proof}

\subsection{Uniqueness of the complex moment problems}\

The fact that we have uniqueness determination results for the Born approximation and the Fréchet derivatives (see \Cref{r:uniq_frechet}~(ii)), suggests a similar uniqueness result should hold for the moment problem associated to the moments $\mu_{\ell,m}^\ka[q]$. We start with the case $\ka= 0$.
For $\xi=(\xi_1,\xi_2)\in\R^2$ we set
\[
\zeta := -\frac{\xi_1-i\xi_2}{2}\in\C.
\]

\begin{lemma} \label{l:rep_moment_prob}
    Let $f\in L^1(\ID)$. Then, for every $\xi \in \R^2$, 
    \begin{equation*}
        \cF f(\xi) =  
         2\pi\sum_{\ell,m=0}^\infty    \frac{i^{\ell+m}}{\ell!m!}  \mu_{\ell,m}^0[f]\ol{\zeta}^\ell \zeta^m.
    \end{equation*}
    In particular, two functions in $L^1(\ID)$ having the same complex moments \eqref{e:cm} must necessarily coincide.
\end{lemma}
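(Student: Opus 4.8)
The plan is to rewrite the Fourier kernel $e^{-ix\cdot\xi}$ in the complex coordinate $z=x_1+ix_2$ and expand it into a power series whose coefficients are exactly the complex moments $\mu_{\ell,m}^0[f]$ from \eqref{e:cm}. First I would record the elementary identity
\[
    x\cdot\xi=\Re\!\left(\overline{z}\,w\right)=\tfrac12\left(\overline{z}\,w+z\,\overline{w}\right),\qquad w:=\xi_1+i\xi_2.
\]
Since $\zeta=-(\xi_1-i\xi_2)/2$ satisfies $w=-2\overline{\zeta}$ and $\overline{w}=-2\zeta$, this gives $-ix\cdot\xi=i\left(z\zeta+\overline{z}\,\overline{\zeta}\right)$, so that the kernel factors as $e^{-ix\cdot\xi}=e^{iz\zeta}\,e^{i\overline{z}\,\overline{\zeta}}$.

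Next I would expand each exponential factor as its Taylor series and multiply, obtaining
\[
    e^{iz\zeta}\,e^{i\overline{z}\,\overline{\zeta}}=\sum_{\ell,m=0}^\infty \frac{i^{\ell+m}}{\ell!\,m!}\,\overline{z}^{\,\ell}z^{m}\,\overline{\zeta}^{\,\ell}\zeta^{m}.
\]
Inserting this into $\cF f(\xi)=\int_{\ID} f(z)\,e^{-ix\cdot\xi}\,\d m(z)$ and interchanging summation and integration identifies each coefficient, because $\int_{\ID} f(z)\,\overline{z}^{\,\ell}z^{m}\,\d m(z)=2\pi\,\mu_{\ell,m}^0[f]$ by \eqref{e:cm}; this produces exactly the claimed representation.

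The one point requiring care, and the main (if modest) obstacle, is justifying the interchange of the double sum with the integral. Here the restriction to the unit disk is what makes things work: for $z\in\ID$ one has $|z|\le 1$, so the absolute value of the general term of the integrand is bounded by $\frac{|\zeta|^{\ell+m}}{\ell!\,m!}\,|f(z)|$, and summing over $\ell,m$ yields the majorant $e^{2|\zeta|}\,|f(z)|$, which is integrable on $\ID$ precisely because $f\in L^1(\ID)$. Fubini–Tonelli then legitimizes the interchange for every fixed $\xi\in\R^2$. Finally, the uniqueness assertion is immediate from the representation: if $f_1,f_2\in L^1(\ID)$ satisfy $\mu_{\ell,m}^0[f_1]=\mu_{\ell,m}^0[f_2]$ for all $\ell,m\in\N_0$, then $\cF f_1=\cF f_2$ on all of $\R^2$, and injectivity of the Fourier transform on $L^1$ forces $f_1=f_2$ almost everywhere.
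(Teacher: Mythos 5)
Your proof is correct and follows essentially the same route as the paper's: factor $e^{-ix\cdot\xi}=e^{iz\zeta}e^{i\overline{z}\,\overline{\zeta}}$, Taylor-expand both exponentials, and identify the coefficients with the complex moments from \eqref{e:cm}, with uniqueness following from injectivity of the Fourier transform on $L^1$. Your explicit Fubini--Tonelli justification via the majorant $e^{2|\zeta|}|f(z)|$ on $\ID$ is a welcome detail that the paper leaves implicit.
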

\begin{proof}
    Use that $e^{-ix\cdot \xi} = e^{i(\zeta z +\ol{\zeta z})}$ with $z= x+iy$, and a Taylor series of the exponential functions $e^{i\zeta z }$ and $e^{i\ol{\zeta z}}$. The uniqueness part follows from the formula, since it implies that the Fourier transforms of any two functions in $L^1(\ID)$ with the same complex moments must coincide.
\end{proof}
\begin{remark} \label{r:prop_subspace}
    As a consequence of this lemma and \eqref{e:cmpBorn} one gets
    \begin{equation*}
        \cF \gb_1 (\xi)=  \pi\sum_{m,\ell=0}^\infty    \frac{i^{\ell+m}}{(\ell+1)!(m+1)!} \hp{\cc{e_{\ell+1}}}{\Lambda_\gamma e_{m+1}}_{H^{1/2}\times H^{-1/2}} \ol{\zeta}^\ell \zeta^m.
    \end{equation*}
    This formula shows that only the matrix elements $\hp{\cc{e_{\ell+1}}}{\Lambda_\gamma e_{m+1}}_{H^{1/2}\times H^{-1/2}}$ with $m,\ell>0$ are required to determine $\gb_1$, not the complete operator $\La_\ga$.
\end{remark}

We now prove an analogous result to \Cref{l:rep_moment_prob} for $\ka \neq 0$. As expected, the situation is a bit more complicated when $\ka\neq0$. 
\begin{theorem}\label{thm:Fourier d=2}
    Let $f\in L^1(\ID)$ and $\kappa\in\R\setminus\{0\}$, then for $\xi\in \R^2\setminus\{0\}$ we have
\begin{equation*}
    \F f(\xi)=
       2\pi \sum_{\ell,m\in\Z} i^{\ell+m}\left(\frac{|\zeta|+\sqrt{|\zeta|^2-\ka}}{|\zeta|\sqrt{\ka}}\right)^{\ell+m}J_{\ell}(\sqrt{\kappa})J_m(\sqrt{\kappa})\mu_{\ell,m}^\ka[f]\ol{\zeta}^{\ell}\zeta^m,
\end{equation*}
and the series is absolutely convergent. In particular, two functions in $L^1(\ID)$ having the same $\mu_{\ell,m}^\ka$-moments are necessarily equal.
\end{theorem}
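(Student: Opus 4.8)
The plan is to mimic the $\ka=0$ argument of \Cref{l:rep_moment_prob} — where one factors the plane wave as $e^{-ix\cdot\xi}=e^{i\zeta z}e^{i\ol{\zeta}\ol z}$ and Taylor-expands each (holomorphic, antiholomorphic) factor — but with two substitutions adapted to $\ka\neq0$: the holomorphic factorization is replaced by a complex geometrical optics (CGO) factorization into two \emph{constant-potential} solutions, and the Taylor expansion is replaced by the Bessel generating function. Concretely, for $\xi\neq0$ (so $\zeta\neq0$) I choose $\eta_1,\eta_2\in\C^2$ with $\eta_1+\eta_2=-i\xi$ and $\eta_j\cdot\eta_j=-\ka$, exactly as in \Cref{thm:Born_d=2_uniq_Om}; then $e_{\eta_j}(x)=e^{\eta_j\cdot x}$ solves $(-\Delta-\ka)u=0$ and $e^{-ix\cdot\xi}=e_{\eta_1}(x)\,e_{\eta_2}(x)$. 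Writing $x=re^{i\theta}$ and setting $p_j:=\eta_j^{(1)}+i\eta_j^{(2)}$, $q_j:=\eta_j^{(1)}-i\eta_j^{(2)}$ (so $p_jq_j=-\ka$) gives $\eta_j\cdot x=\tfrac{r}{2}\!\left(q_je^{i\theta}+p_je^{-i\theta}\right)$.

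Next I expand each factor. The generating function $e^{\frac{w}{2}(t-t^{-1})}=\sum_{k\in\Z}J_k(w)t^k$ with $w=\sqrt{\ka}\,r$ and $t=(q/\sqrt{\ka})e^{i\theta}$ yields $e^{\eta\cdot x}=\sum_{k\in\Z}(q/\sqrt{\ka})^kJ_k(\sqrt{\ka}\,r)e^{ik\theta}$. I apply this to $\eta_2$ (summation index $m$), and to $\eta_1$ after reindexing $k\mapsto-\ell$, using $J_{-\ell}=(-1)^\ell J_\ell$ and $p_1q_1=-\ka$ to rewrite the coefficient as $(p_1/\sqrt{\ka})^\ell$. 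Taking the Cauchy product of the two Laurent series gives
\[
e^{-ix\cdot\xi}=\sum_{\ell,m\in\Z}\Big(\tfrac{p_1}{\sqrt{\ka}}\Big)^{\ell}\Big(\tfrac{q_2}{\sqrt{\ka}}\Big)^{m}J_\ell(\sqrt{\ka}\,r)J_m(\sqrt{\ka}\,r)\,e^{i(m-\ell)\theta}.
\]
Since $J_\ell(\sqrt{\ka}\,r)=J_\ell(\sqrt{\ka})\,\phi^\ka_\ell(r)$ for every integer $\ell$ (by \eqref{e:phi_def} and the sign rule above) and $e^{i(m-\ell)\theta}=2\pi\,\overline{e_\ell(\widehat{x})}e_m(\widehat{x})$, each summand is a constant multiple of the integrand of $\mu_{\ell,m}^\ka$ in \eqref{e:mom_ka}. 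Multiplying by $f\in L^1(\ID)$ and integrating term by term therefore produces $\F f(\xi)=2\pi\sum_{\ell,m}(p_1/\sqrt{\ka})^\ell(q_2/\sqrt{\ka})^mJ_\ell(\sqrt{\ka})J_m(\sqrt{\ka})\,\mu_{\ell,m}^\ka[f]$.

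It then remains to identify the coefficient. Solving the constraints of the first step explicitly, I set $\lambda:=\big(|\zeta|+\sqrt{|\zeta|^2-\ka}\big)/|\zeta|$ and take $p_1=i\lambda\ol\zeta$, $q_2=i\lambda\zeta$; the remaining $q_1,p_2$ are then forced, and all four identities $p_jq_j=-\ka$, $p_1+p_2=2i\ol\zeta$, $q_1+q_2=2i\zeta$ hold \emph{precisely because} $\lambda(2-\lambda)=\ka/|\zeta|^2$, which is the defining quadratic for $\lambda$. Hence $(p_1/\sqrt{\ka})^\ell(q_2/\sqrt{\ka})^m=i^{\ell+m}(\lambda/\sqrt{\ka})^{\ell+m}\ol\zeta^{\ell}\zeta^{m}$, which is exactly the factor in the statement once $J_\ell(\sqrt{\ka})J_m(\sqrt{\ka})$ is displayed separately. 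For absolute convergence I use the factorial decay $J_k(w)\sim(w/2)^{|k|}/|k|!$, which dominates the geometric factors $(p_1/\sqrt{\ka})^\ell,(q_2/\sqrt{\ka})^m$ in every regime of $\ka$ and gives uniform convergence for $r\le1$; combined with $|\phi^\ka_\ell(r)|\le1$ on $[0,1]$ (valid since $\sqrt{\ka}$ lies below the first zero of each $J_{|\ell|}$, so the moments are bounded by $\tfrac{1}{2\pi}\|f\|_{L^1}$) this both justifies the term-by-term integration via dominated convergence and yields the asserted absolute convergence of the series. The uniqueness statement follows at once: equal $\mu^\ka$-moments force $\F f_1=\F f_2$ on $\R^2\setminus\{0\}$, hence everywhere by continuity and thus $f_1=f_2$ by injectivity of the Fourier transform.

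The main obstacle is conceptual rather than computational: recognizing that the holomorphic/antiholomorphic Taylor factorization of the $\ka=0$ case must be replaced by a CGO factorization into constant-potential exponentials followed by the Bessel generating function. The delicate bookkeeping is then (i) selecting the \emph{correct} root $\lambda$ so that the $+$ sign in $\sqrt{|\zeta|^2-\ka}$ appears, and (ii) controlling the branch of $\sqrt{\ka}$ when $\ka<0$ (where $J_k(\sqrt{\ka}\,r)$ becomes a modified Bessel function and the $\eta_j$ are genuinely complex), while keeping the convention $J_{-\ell}=(-1)^\ell J_\ell$ consistent throughout.
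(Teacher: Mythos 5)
Your proposal is correct and follows essentially the same route as the paper: factor the plane wave into two exponentials $e_{\eta_1}e_{\eta_2}$ with $\eta_1+\eta_2=-i\xi$, $\eta_j\cdot\eta_j=-\ka$, expand each via the Bessel generating function (the paper packages this as a lemma computing $\hp{e_\ell}{g_\eta}_{L^2(\IS^1)}$, you take the Cauchy product directly), identify the coefficients as $i^{\ell+m}(\lambda/\sqrt{\ka})^{\ell+m}\ol{\zeta}^\ell\zeta^m$ with $\lambda=(|\zeta|+\sqrt{|\zeta|^2-\ka})/|\zeta|$, and justify convergence by the super-geometric decay of $J_k(\sqrt{\ka})$ in $|k|$. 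The only cosmetic caveat is that your parenthetical bound $|\phi^\ka_\ell(r)|\le 1$ is neither needed (the factorial decay of $J_\ell(\sqrt{\ka}\,r)$ already controls $J_\ell(\sqrt{\ka})J_m(\sqrt{\ka})\mu^\ka_{\ell,m}[f]$, which is how the paper argues) nor valid for all $\ka\in\R\setminus\{0\}$ as stated.
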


The proof of \Cref{thm:Fourier d=2} relies on an explicit computation. Given $\eta\in\IC^2$, let 
\begin{equation*}
    g_\eta(x):= e^{\eta\cdot x}.
\end{equation*}
\begin{lemma}\label{prop:Proyection d=2}
Let $\eta=(\eta_1,\eta_2)\in\C^2$ such that $\eta\cdot\eta=\eta_1^2+\eta_2^2\neq 0$. For every $\ell\in\IZ$ the following holds.
\begin{equation*}
\hp{e_\ell}{g_{\eta}}_{L^2(\IS^1)}=
\displaystyle \sqrt{2\pi}\left(\frac{\eta_1-i\eta_2}{\sqrt{\eta\cdot\eta}}\right)^{\ell}I_{\ell}\left(\sqrt{\eta\cdot\eta}\right).
\end{equation*}
\end{lemma}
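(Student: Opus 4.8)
The plan is to reduce the computation to the Fourier expansion of a complex plane wave restricted to the unit circle, i.e.\ to the generating function of the modified Bessel functions. First I would parametrize $\IS^1$ by $\theta\mapsto(\cos\theta,\sin\theta)$ and write out the $L^2(\IS^1)$ inner product (conjugating the first slot) as
\begin{equation*}
    \hp{e_\ell}{g_\eta}_{L^2(\IS^1)}=\int_{\IS^1}\cc{e_\ell}\,g_\eta=\frac{1}{\sqrt{2\pi}}\int_0^{2\pi} e^{-i\ell\theta}\,e^{\eta_1\cos\theta+\eta_2\sin\theta}\,\d\theta .
\end{equation*}
The key algebraic step is to rewrite the exponent on the circle as $\eta_1\cos\theta+\eta_2\sin\theta=\alpha e^{i\theta}+\beta e^{-i\theta}$, where $\alpha:=\tfrac12(\eta_1-i\eta_2)$ and $\beta:=\tfrac12(\eta_1+i\eta_2)$. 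A direct multiplication gives $\alpha\beta=\tfrac14(\eta_1^2+\eta_2^2)=\tfrac14\,\eta\cdot\eta$, which is the quantity that will produce the Bessel argument $\sqrt{\eta\cdot\eta}$.

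Next I would match this exponent to the generating function $e^{\frac{w}{2}(t+t^{-1})}=\sum_{n\in\IZ}I_n(w)\,t^n$. Setting $w:=\sqrt{\eta\cdot\eta}$ and $t:=c\,e^{i\theta}$ with $c:=\sqrt{\alpha/\beta}$, one checks $\tfrac{w}{2}c=\alpha$ and $\tfrac{w}{2}c^{-1}=\beta$ (using $\tfrac{w^2}{4}=\alpha\beta$), so that $\tfrac{w}{2}(t+t^{-1})=\alpha e^{i\theta}+\beta e^{-i\theta}$ matches the exponent above. This yields the Fourier series $e^{\eta\cdot x}=\sum_{n\in\IZ}I_n(\sqrt{\eta\cdot\eta})\,c^n e^{in\theta}$ on $\IS^1$. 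Integrating term by term against $e^{-i\ell\theta}/\sqrt{2\pi}$ annihilates all terms except $n=\ell$ and gives $\hp{e_\ell}{g_\eta}_{L^2(\IS^1)}=\sqrt{2\pi}\,c^\ell I_\ell(\sqrt{\eta\cdot\eta})$. It then remains to simplify the base: from $\alpha/\beta=4\alpha^2/(\eta\cdot\eta)$ one gets $c=2\alpha/\sqrt{\eta\cdot\eta}=(\eta_1-i\eta_2)/\sqrt{\eta\cdot\eta}$, which is exactly the factor in the statement. This computation also covers $\ell<0$ for free, since the generating function automatically encodes $I_{-n}=I_n$.

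The only genuinely delicate point — and the one I would take care to address — is the branch ambiguity in $\sqrt{\eta\cdot\eta}$ and $\sqrt{\alpha/\beta}$, as $\eta\cdot\eta$ is an arbitrary nonzero complex number. The clean way to see this ambiguity is cosmetic is to expand $I_\ell(w)=(w/2)^{|\ell|}\sum_{k\ge0}(w/2)^{2k}/\big(k!\,\Gamma(k+|\ell|+1)\big)$ and observe that $c^\ell(w/2)^\ell=\alpha^\ell$ and $(w/2)^{2k}=(\alpha\beta)^k$ are both polynomial (hence entire) in $\eta$; consequently $c^\ell I_\ell(\sqrt{\eta\cdot\eta})$ is an unambiguous analytic function of $\eta$, and any consistent choice of the two square roots (the same branch in $w$ and in $c$) produces the stated value. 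Term-by-term integration is justified because the Laurent series for $e^{\frac{w}{2}(t+t^{-1})}$ converges uniformly on the circle $|t|=|c|$. Beyond this bookkeeping, the proof is a direct calculation.
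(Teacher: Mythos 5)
Your proof is correct and follows essentially the same route as the paper: both arguments expand $e^{\eta_1\cos\theta+\eta_2\sin\theta}$ via the generating function $e^{\frac{w}{2}(t+t^{-1})}=\sum_{n\in\IZ}I_n(w)t^n$ with $w=\sqrt{\eta\cdot\eta}$ and $t=\frac{\eta_1-i\eta_2}{\sqrt{\eta\cdot\eta}}e^{i\theta}$, then read off the $\ell$-th Fourier coefficient. Your extra remark that $c^{\ell}I_{\ell}(\sqrt{\eta\cdot\eta})$ is entire in $\eta$, so the branch choice is immaterial, is a welcome clarification that the paper leaves implicit.
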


\begin{proof}
From the generating function for the Bessel coefficients \cite[Section 2.1]{Watson1944}, and the identity $I_n(z)=(-i)^n J_n(iz)$ for $n\in\IZ$, we obtain
\begin{equation*}
    e^{\frac{z}{2}(w+w^{-1})}=\sum_{n\in\IZ}I_n(z)w^n.
\end{equation*}
With the choice $a(\eta):=\eta_1-i\eta_2$,
\begin{equation*}
    w= \frac{a(\eta)}{\sqrt{\eta\cdot\eta}} e^{i\theta}, \qquad z=\sqrt{\eta\cdot\eta},
\end{equation*}
the Fourier coefficients of the resulting function of $\theta$ are
\begin{equation*}
    \int_{0}^{2\pi} \exp\left(\frac{\sqrt{\eta\cdot\eta}}{2}\left(\frac{a(\eta)}{\sqrt{\eta\cdot\eta}} e^{i\theta}+\frac{\sqrt{\eta\cdot\eta}}{a(\eta)} e^{-i\theta}\right)\right)\ol{e_\ell(\theta)} \d{\theta} =  \sqrt{2\pi}\left(\frac{\eta_1-i\eta_2}{\sqrt{\eta\cdot\eta}}\right)^{\ell}I_{\ell}\left(\sqrt{\eta\cdot\eta}\right).
\end{equation*}
This suffices to conclude since
\begin{equation*}
    \frac{1}{2}\left(a(\eta) e^{i\theta}+\frac{\eta\cdot\eta}{a(\eta)} e^{-i\theta}\right) = \frac{1}{2}((\eta_1-i\eta_2)e^{i\theta} + (\eta_1 + i\eta_2)e^{-i\theta})=\eta_1\cos\theta+\eta_2\sin\theta.\qedhere    
\end{equation*}
\end{proof}

\begin{proof}[Proof of \Cref{thm:Fourier d=2}]

For any $\xi\in \R^2\setminus\{0\}$ and $\kappa\in\R$, define
\begin{equation}\label{eq:Explicit Z d=2}
    \eta_\pm(\xi):=-\frac{i}{2}\xi \pm \sqrt{-\kappa+\frac{\abs{\xi}^2}{4}}\frac{\xi^\bot}{|\xi|},
\end{equation}
where $\xi^\bot$ denotes the counter-clockwise rotation of angle $\pi/2$ of $\xi$ around the origin. 
Start by noting that $\{\eta_+(\xi),\eta_-(\xi)\}$ is the only choice of vectors of $\{\eta_+,\eta_-\}\subset\C^2$ that satisfies
\begin{equation*}
    \eta_+ +\eta_-=-i\xi\qquad \eta_+\cdot\eta_+ =\eta_-\cdot\eta_-=-\kappa.
\end{equation*}
With this in mind, one can compute the Fourier transform using that the plane  wave $e^{-i\xi\cdot x}$ can be factorized as
\[
e^{-i\xi\cdot x}=g_{\eta_+}(x)g_{\eta_-}(x).
\]
\begin{align*}
    \F f(\xi)&=\int_0^1 r\hp{\cc{g_{r\eta_-}}}{f(r\,\cdot)g_{r\eta_+}}_{L^2(\IS^{1})}\d{r}\\
    &=\int_0^1 r\left(\sum_{\ell,m\in\Z}\hp{e_{\ell}}{g_{r\eta_-}}_{L^2(\IS^1)}\hp{e_m}{g_{r\eta_+}}_{L^2(\IS^1)}\hp{\cc{e_{\ell}}}{f(r\,\cdot)e_m}_{L^2(\IS^{1})}\right)\d{r}\\
    &=\int_0^1 r\left(\sum_{\ell,m\in\Z}\hp{e_{-\ell}}{g_{r\eta_-}}_{L^2(\IS^1)}\hp{e_m}{g_{r\eta_+}}_{L^2(\IS^1)}\hp{e_{\ell}}{f(r\,\cdot)e_m}_{L^2(\IS^{1})}\right)\d{r}.
\end{align*}
Using \Cref{prop:Proyection d=2}, that 
\[
I_\ell(\sqrt{-\ka}\,r)=(\ka/|\ka|)^{\ell}i^{\ell}J_\ell(\sqrt{\ka}\,r),\quad J_{-\ell}=(-1)^{\ell}J_\ell,\quad \forall\ell\in\IZ,
\]
and the fact that we are taking the principal branch of the square root we obtain, writing again $a(\eta):=\eta_1-i\eta_2$ for $\eta=(\eta_1,\eta_2)\in\IC^2$, that the Fourier transform $\F f(\xi)$ equals
\begin{align*}
    2\pi\int_0^1  \left(\sum_{\ell,m\in\Z} (-1)^{\ell}\left(\frac{a(\eta_-)}{\sqrt{\kappa}}\right)^{-\ell}\left(\frac{a(\eta_+)}{\sqrt{\kappa}}\right)^{m}J_{\ell}\left(\sqrt{\kappa}\,r\right)J_{m}\left(\sqrt{\kappa}\,r\right)\hp{e_{\ell}}{f(r\,\cdot)e_m}_{L^2(\IS^{1})}\right)r\d{r}.
\end{align*}
This can also be written, after exchanging series and integral, as:
\begin{equation}\label{e:ftalmost}
    \F f(\xi)=2\pi \sum_{\ell,m\in\Z} (-1)^{\ell}\left(\frac{a(\eta_-)}{\sqrt{\kappa}}\right)^{-\ell}\left(\frac{a(\eta_+)}{\sqrt{\kappa}}\right)^{m}J_{\ell}\left(\sqrt{\kappa}\right)J_{m}\left(\sqrt{\kappa}\right)\mu_{\ell,m}^\ka[f].
\end{equation}
To check that this series converges absolutely, note that
\begin{align}\label{eq:Bound d=2}
  \sum_{\ell,m\in\Z} \abs{\frac{a(\eta_-)}{\sqrt{\kappa}}}^{\ell}\abs{\frac{a(\eta_+)}{\sqrt{\kappa}}}^{m}&\int_{\ID} \abs{f(x)J_{\ell}(\sqrt{\kappa}\abs{x})J_{m}(\sqrt{\kappa}\abs{x})}\d{x}\nonumber\\ &\leq\norm{f}_{L^1(\ID)}\left(2\sum_{\ell=0}^\infty M^{\ell}\norm{J_\ell(\sqrt{\kappa}\,\cdot)}_{L^\infty([0,1])}\right)^2,    
\end{align}
with 
\[M(\xi,\ka)=\max\left\{\abs{\frac{a(\eta_+)}{\sqrt{\kappa}}},\abs{\frac{a(\eta_-)}{\sqrt{\kappa}}},\abs{\frac{a(\eta_+)}{\sqrt{\kappa}}}^{-1},\abs{\frac{a(\eta_-)}{\sqrt{\kappa}}}^{-1}\right\}.\]
It follows from the power series expansion of $J_{\ell}$ that for $\ell$ large enough (depending on $\kappa>0$ or unconditional if $\ka<0$) we have 
\begin{equation*}
    \norm{J_{\ell}(\sqrt{\kappa}\,\cdot)}_{L^\infty([0,1])}=\abs{J_\ell(\sqrt{\kappa})}=\abs{I_{\ell}\left(\sqrt{-\kappa}\right)}.
\end{equation*}
The result follows from (see \cite[Section 8.1]{Watson1944})
\begin{equation*}
    \lim_{\nu\to\infty}\frac{\abs{I_{\nu}\left(z\right)}}{\sqrt{\frac{1}{2\pi \nu}}\abs{\frac{e z}{2\nu}}^\nu}=1    
\end{equation*}
which shows the series in \eqref{eq:Bound d=2} is finite. To conclude, observe that a direct computation shows that:
\begin{equation*}
    a(\eta_+)=i\frac{\zeta}{|\zeta|} (|\zeta|+ \sqrt{|\zeta|^2-\ka}),\qquad a(\eta_-)^{-1}=-i\frac{\ol{\zeta}}{|\zeta|}\frac{|\zeta|+ \sqrt{|\zeta|^2-\ka}}{\ka}.
\end{equation*}
Substituting this in \eqref{e:ftalmost} concludes the proof.
\end{proof}

\section{Numerical analysis of the Born approximation}\label{s:numerical}

In this section, we present how we numerically compute the matrix elements of $\La_\ga$ from $\gamma$ (direct problem) and how we construct $\gb_{\sigma_\kappa}$ from those matrix elements (inverse problem). Subsequently, we provide various numerical experiments that illustrate the advantages and limitations of $\gb_{\sigma_\kappa}$ as an approximation of $\ga$. The Julia code used can be found in the GitHub repository \cite{Git}.

\subsection{Direct problem method}\

We aim to compute the matrix elements $\hp{\cc{e_\ell}}{\Lambda_\ga e_m}_{H^{1/2}\times H^{-1/2}}$ from a given $\ga$. In order to do this, we first need to solve a discretized version of \eqref{e:conduct} with $f=e_m$. We embed in $\ID$ a spiderweb graph that has the positive, second kind Chebyshev nodes of order $2N_r-1$ in the radial direction and an even number $N_\theta$ of Fourier nodes in the angular direction. Furthermore, we assign to each direction its corresponding spectral differentiation matrix. Following \cite[Chapter 11]{Trefethen}, we use these nodes and differentiation matrices to transform \eqref{e:conduct} into a linear matrix equation, which we then solve. The projection over $e_\ell$ is then done with a trapezoidal integration rule. Since we use $N_\theta$ Fourier nodes, it only makes sense to compute the matrix elements for which $\abs{l},\abs{m}< N_\theta/2$.

We use spectral differentiation matrices, as they provide exponential convergence towards the actual matrix elements if $\ga$ is analytic (see \cite{Kopriva}). This ensures a high accuracy of the direct problem method, with a relatively low number of nodes $(N_r,N_\theta)$. However, the spectral differentiation matrices, and therefore our direct problem method, produce significant errors when $\ga$ is not regular enough.

Having accurate matrix elements allows us to isolate the errors made by our approximation to $\ga$ as originating primarily from the inverse problem method.

\subsection{Inverse problem method}\

From the matrix elements of $\Lambda_\gamma$ we want to construct $\gb_{\sigma_\ka}$, which we take as an approximation to $\ga$. To accomplish this, we exploit \Cref{prop:Born_d=2_uniq}: if $\gb_{\sigma_\kappa}$ exists, then it satisfies
\begin{equation}\label{e:num_moments}
    \gm_{\ell,m}^{\ka}[\gb_{\si_k}] =  \hp{\cc{e_{\ell}}}{\Lambda_{\ga} e_m}_{H^{1/2}\times H^{-1/2}}, \qquad \ell,m\in\Z,
\end{equation}
with $\gm_{\ell,m}^\ka[\cdot]$ given by \eqref{e:mom_ga_def}.
This equality allows us to numerically construct $\gb_{\sigma_\ka}$ as a solution to a least-squares problem as follows. 

Given the elements of the matrix $\hp{\cc{e_{\ell}}}{\Lambda_{\ga} e_m}_{H^{1/2}\times H^{-1/2}}$ with $\ell,m=1,\ldots,L$, we choose $I\in \IN$ and define the $L^2(\ID)$-orthonormal functions
\[f_{i,j}(r,\theta)\coloneqq \displaystyle \frac{I}{\sqrt{i-1/2}}\chi_{(\frac{i-1}{I},\frac{i}{I}]}(r)e_j(\theta),\quad i=1,\ldots I,\,j=-L,\ldots,L.\]
We assume that $\gb_{\sigma_\ka}$ can be expressed as
\begin{equation}\label{e:num_ansatz}
    \gb_{\sigma_\ka}=\sum_{i=1}^{I}\sum_{j=-L}^{L} x_{i,j}f_{i,j},
\end{equation}
for some coefficients $x_{i,j}\in\IC$. The linearity of the $\gm_{\ell,m}^\ka$-moments and the equalities \eqref{e:num_moments} and \eqref{e:num_ansatz} lead us to
\begin{equation}\label{e:num_leastsquares}
    \sum_{i=1}^{I}\sum_{j=-L}^{L} x_{i,j}\gm_{\ell,m}^\ka[f_{i,j}]=\hp{\cc{e_{\ell}}}{\Lambda_{\ga} e_m}_{H^{1/2}\times H^{-1/2}},\qquad\ell,m=1,\ldots,L.
\end{equation}
Since $\gm_{\ell,m}^\ka[f_{i,j}]$ can be computed explicitly for $\kappa=0$ and numerically otherwise, the relation \eqref{e:num_leastsquares} is a matrix linear equation for the coefficients $x_{i,j}$, which we write as $Ax=b$. This equation is ill-posed; hence we solve it via least squares with a Tikhonov regularization term that penalizes the $\ell^2$-norm of the vector $x$ (which is equal to the $L^2(\ID)$-norm of $\gb_{\sigma_\ka}$), that is, 
\[x_\lambda=(A^*A+\lambda\Id)^{-1}A^*b.\]
The regularization parameter $\lambda>0$ is chosen by observing the associated L-curve 
\[\lambda\longmapsto(\log\norm{Ax_\lambda-b}^2_{\ell^2},\log\norm{x_\lambda}^2_{\ell^2}).
\]
If the L-curve has a sharp corner, then we choose the $\lambda$ at which the curvature of the L-curve is maximal (we actually maximize a simplified version of the curvature introduced in \cite[Lemma 1.1]{Kindermann}), otherwise we choose the $\lambda$ at which the curvature vanishes.

Notice that this method allows great flexibility on the radial part of the functions $f_{i,j}$. Any family of orthonormal functions in $L^2([0,1],r\d{r})$ would be an acceptable choice. This could produce a more regular reconstruction of $\gb_{\sigma_\ka}$ following \eqref{e:num_ansatz}, at the cost of a more computationally expensive computation of the moments $\gm_{\ell,m}^\ka[f_{i,j}]$.

For the case $\kappa=0$, we can construct from $\gb_1$ a better approximation to $\ga$ by applying the iterative scheme (see \cite{BCMM23_n})
\[\ga_0=\gb_1,\qquad \gamma_{n}=\gamma_{n-1} + \gb_1 - (\gamma_{n-1})^\mathrm{B}_{1}.\]
For each iteration we first solve the direct problem of computing $\hp{\cc{e_{\ell}}}{\Lambda_{\ga_{n-1}} e_m}_{H^{1/2}\times H^{-1/2}}$, followed by the inverse problem of computing $(\gamma_{n-1})^\mathrm{B}_{1}$. Since the method we use for the direct problem requires regularity of its input and the output of our inverse problem method is of the form \eqref{e:num_ansatz}, which is clearly discontinuous, we can only apply the iterative scheme a few times before it diverges. However, we observe that one or two iterations are sufficient for $\norm{\gamma-\gamma_n}_{L^p(\ID)}$ to decrease.

\subsection{Experiments}\

\subsubsection*{Experiment 1}
The purpose of the experiment is to illustrate that when $\ga$ is a positive or negative bump on a constant background, $\gb_1$ accurately captures the location of the bump.

In \cref{Fig:Bump+} and \cref{Fig:Bump-} we use the conductivities
\[\gamma(r,\theta)=1+g(re^{i\theta},0.6\,e^{i\pi/4},0.25),\qquad \gamma(r,\theta)=1-0.5\,g(re^{i\theta},0.6\,e^{i\pi/4},0.25),
\]
where $g$ is the smooth bump function
\[
g(z,z_0,R)\coloneqq\begin{cases}
    \exp\left(\frac{-\abs{z-z_0}^2}{R^2-\abs{z-z_0}^2}\right),& \text{if } \abs{z-z_0} \leq R,\\
    0,& \text{else.}
\end{cases}
\]
We solve the direct problem with $(N_r,N_\theta)=(50,50)$ and the inverse problem with $(I,L)=(50,24)$. 
In both cases, we observe that $\gb_1$ is less than $\gamma$, but this discrepancy is significantly reduced after applying the iterative scheme a few times.

We can use the conductivity of \cref{Fig:Bump+} to demonstrate the precision of our direct problem method based on spectral differentiation. The maximum absolute error between $\hp{\cc{e_{\ell}}}{\Lambda_{\ga} e_m}_{H^{1/2}\times H^{-1/2}}$ as obtained with $(N_r,N_\theta)=(50,50)$ and $(N_r,N_\theta)=(100,100)$ is
\[
\max_{1\leq\ell,m\leq 24}\abs{\hp{\cc{e_{\ell}}}{\Lambda_{\ga} e_m}_{H^{1/2}\times H^{-1/2}}(50,50)-\hp{\cc{e_{\ell}}}{\Lambda_{\ga} e_m}_{H^{1/2}\times H^{-1/2}}(100,100)}=1.6\times10^{-4}.
\]
This error is two orders of magnitude smaller than any of the errors reported in \cref{Fig:Bump+}, from which we conclude that the inaccuracies in our approximation to $\gamma$ arise from the inverse problem method and not the direct one.

\subsubsection*{Experiment 2}
In this experiment, we show how the resolution (the ability to distinguish two nearby objects) of the Born approximation $\gb_1$ is highly dependent on the distance to the boundary.

In \cref{Fig:Resolution_Boundary} the conductivity 
\begin{align*}
\gamma(r,\theta)=1&+g(re^{i\theta},0.8\,e^{i\pi\left(\frac{3}{6}+\frac{1}{20}\right)},0.1)+g(re^{i\theta},0.8\,e^{i\pi\left(\frac{3}{6}-\frac{1}{20}\right)},0.1)\\
& +g(re^{i\theta},0.8\,e^{i\pi\left(\frac{7}{6}+\frac{1}{25}\right)},0.1)+g(re^{i\theta},0.8\,e^{i\pi\left(\frac{7}{6}-\frac{1}{25}\right)},0.1)\\
& +g(re^{i\theta},0.8\,e^{i\pi\left(\frac{11}{6}+\frac{1}{30}\right)},0.1)+g(re^{i\theta},0.8\,e^{i\pi\left(\frac{11}{6}-\frac{1}{30}\right)},0.1)
\end{align*}
consists of pairs of bumps near the boundary at different distances from each other. We observe that $\gb_1$ distinguishes the bumps that are farther apart from each other but fails with the closest pair. 

Notice that we use $(N_r,N_\theta)=(50,100)$ for the direct problem and $(I,L)=(50,49)$ for the inverse one. This is not because we need more precise matrix elements, but because we need more matrix elements to resolve the small bumps. That is, we need a larger $L$, but since $L< N_\theta/2$, we must first increase $N_\theta$ in order to increase $L$. The following table presents the errors made by different values of $L$ with $N_\theta=2(L+1)$, $N_r=I=50$.
\begin{table}[H]
\centering
\begin{tabular}[]{cccc}
\hline
$L$ \textbackslash\, $p$ & 1 & 2 & $\infty$ \\
\hline
24 & 0.26730 & 0.24074 & 0.91152 \\
29 & 0.19799 & 0.20441 & 0.89030 \\
34 & 0.12702 & 0.15496 & 0.69693 \\
39 & 0.10260 & 0.13376 & 0.67875 \\
44 & 0.09744 & 0.13329 & 0.62283 \\
49 & 0.08933 & 0.13038 & 0.61826 \\
\hline
\end{tabular}
\caption{$\norm{\gamma-\gb_1}_{L^p(\ID)}$ for the conductivity of \cref{Fig:Resolution_Boundary} using different values of $L$.}
\end{table}
Clearly, the errors decay as $L$ increases. We found that using $N_r,\,I>50$ produces negligible improvements.

In \cref{Fig:Resolution_Origin}, we repeat the same setting with the conductivity 
\begin{align*}
\gamma(r,\theta)=1&+g(re^{i\theta},0.4\,e^{i\pi\left(\frac{3}{6}+\frac{1}{10}\right)},0.1)+g(re^{i\theta},0.4\,e^{i\pi\left(\frac{3}{6}-\frac{1}{10}\right)},0.1)\\
& +g(re^{i\theta},0.4\,e^{i\pi\left(\frac{7}{6}+\frac{1}{13}\right)},0.1)+g(re^{i\theta},0.4\,e^{i\pi\left(\frac{7}{6}-\frac{1}{13}\right)},0.1)\\
& +g(re^{i\theta},0.4\,e^{i\pi\left(\frac{11}{6}+\frac{1}{15}\right)},0.1)+g(re^{i\theta},0.4\,e^{i\pi\left(\frac{11}{6}-\frac{1}{15}\right)},0.1),
\end{align*}
which has the bumps closer to the origin. In this case, $\gb_1$ does not distinguish the bumps in any of the pairs, and severely underestimates the height of the bumps.

\subsubsection*{Experiment 3}
This experiment shows that whenever $\gamma$ is a small perturbation of (a multiple of) $\sigma_\kappa$, then $\gb_{\sigma_\kappa}$ is a much better approximation to $\ga$ than $\gb_{\sigma_0}=\gb_1$.

In \cref{Fig:kappa+} we use the conductivity
$\displaystyle\gamma=\frac{\sigma_4}{J_0(2)^2}+\smiley{}$,
where $\smiley{}$ is given by
\[\smiley{}(r,\theta)=g(re^{i\theta},0.6\,e^{i\pi/4},0.25)+g(re^{i\theta},0.6\,e^{i3\pi/4},0.25)+ g(r, 0.6, 0.2) g(\theta, 3\pi/2, \pi / 3).\]
Due to the screening  effect of the very low conductivity near the boundary, $\gb_1$ fails to capture $\ga$ in the interior of the $\ID$; a task where $\gb_{\sigma_4}$ succeeds. This fact becomes even more apparent when we subtract $\frac{\sigma_4}{J_0(2)^2}$ from $\ga$, $\gb_{\sigma_0}$, and $\gb_{\sigma_4}$. For this experiment, we use $(N_r,N_\theta)=(50,50)$ for the direct problem and $(I,L)=(50,24)$ for the inverse one.

In \cref{Fig:kappa-} we repeat the same setting but now with $\ka<0$, namely, we use the conductivity $\ga=\sigma_{-9}+\smiley{}$. Again, we observe that $\gb_{\sigma_{-9}}$ performs much better than $\gb_{\sigma_0}$ in capturing the perturbation $\smiley{}$.

\subsubsection*{Experiment 4}
In this experiment, we consider the case of non-radial, discontinuous conductivities.
In \cref{Fig:Explicit1} and \cref{Fig:Explicit2} we use the functions
\[\gamma(r,\theta)=f\circ c(re^{i\theta}),
\qquad c(z)=\frac{4z-1}{z-4},\]
with $f$ being respectively
\[f(z)=\begin{cases}
3,&\abs{z}\leq\frac{1}{4},\\
2,&\frac{1}{4}<\abs{z}\leq\frac{1}{2},\\
1,&\frac{1}{2}<\abs{z},
\end{cases}\qquad f(z)=\begin{cases}
3,&\abs{z}\leq\frac{1}{4},\\
1/2,&\frac{1}{4}<\abs{z}\leq\frac{1}{2},\\
1,&\frac{1}{2}<\abs{z}.
\end{cases}\]
These $\ga$ are the compositions of the conformal transformation $c$ of $\ID$ with a radial piecewise constant function.

There are two points to note in these figures. First, due to the use of Tikhonov regularization, $\gb_1$ appears continuous even though $\ga$ is discontinuous. In \cref{Fig:Explicit2} the low conductivity annular region shields the high conductivity disk, preventing $\gb_1$ from correctly detecting its height, even after the application of the iterative scheme.

We use $(I,L)=(50,24)$ for the inverse problem method. We cannot use the direct problem method to compute the matrix elements of the DtN map, as it requires regular conductivities. Instead, the matrix elements $\hp{\cc{e_{\ell}}}{\Lambda_{\ga} e_m}_{H^{1/2}\times H^{-1/2}}$ can be analytically computed as a convergent series, using the fact that both conductivities are compositions of a conformal transformation with radial piecewise constant functions.

\subsubsection*{Experiment 5}
Our last experiment explores to what extent our inverse problem method is resistant to noise.

In \cref{Fig:Noise}, we use the conductivity
\[\ga(r,\theta)=1-0.5\,g(re^{i\theta},0.6\,e^{i3\pi/4},0.25)+ g(r, 0.6, 0.2) g(\theta, 3\pi/2, \pi / 3),\]
and compute its corresponding matrix elements using our direct problem method with $(N_r,N_\theta)=(50,50)$. Then, we add a complex Gaussian noise to each matrix element (respecting symmetry), namely,
\[
    \hp{\cc{e_{\ell}}}{\Lambda_{\ga} e_m}_{H^{1/2}\times H^{-1/2}}+\epsilon(\mathcal{N}(0,1) + i \mathcal{N}(0,1)).
\]
We now apply the inverse problem method with parameters $(I,L)=(50,24)$ to compute $\gb_1$ using the matrix elements perturbed by the noise.

From this we observe that $\gb_1$ is resistant to noise up to $\epsilon=10^{-3}$, but it starts to fail at $\epsilon=10^{-2}$. This is to be expected since most of the matrix elements $\hp{\cc{e_{\ell}}}{\Lambda_{\ga} e_m}_{H^{1/2}\times H^{-1/2}}$ are of order $10^{-2}$ for this particular $\ga$.

\newpage
\begin{figure}[t]
\centering
\subfloat[$\gamma$]{\includegraphics[width=0.3\textwidth, trim={1.7cm 0.8cm 1.7cm 0.2cm}, clip]{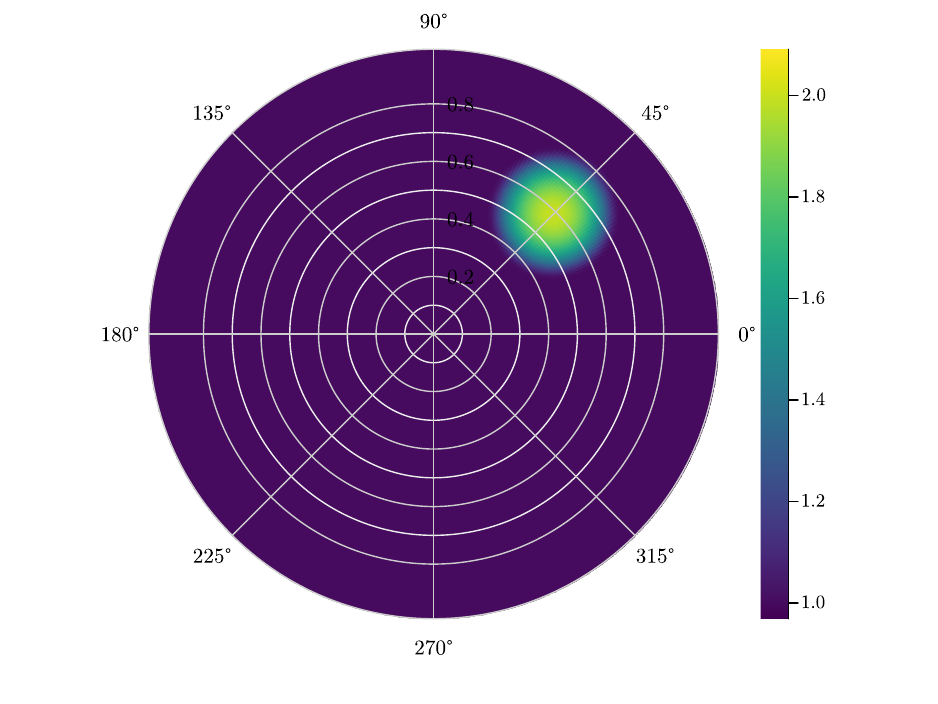}}\hspace{0.04\textwidth}
\subfloat[$\gamma_1^\mathrm{B}=\gamma_0$]{\includegraphics[width=0.3\textwidth, trim={1.7cm 0.8cm 1.7cm 0.2cm}, clip]{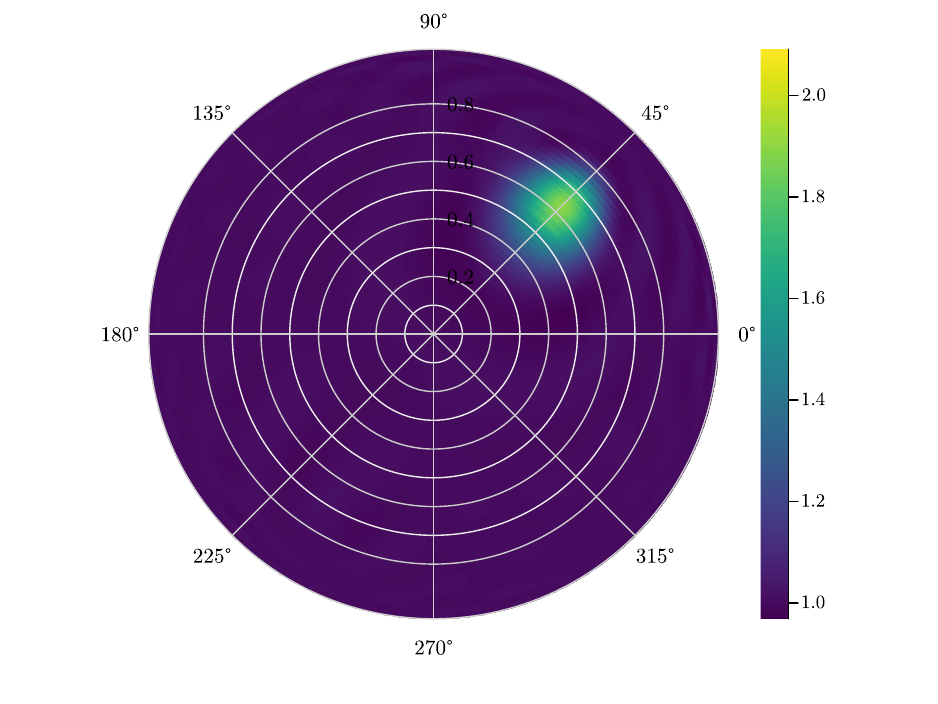}}\hspace{0.04\textwidth}
\subfloat[$\gamma_N$]{\includegraphics[width=0.3\textwidth, trim={1.7cm 0.8cm 1.7cm 0.2cm}, clip]{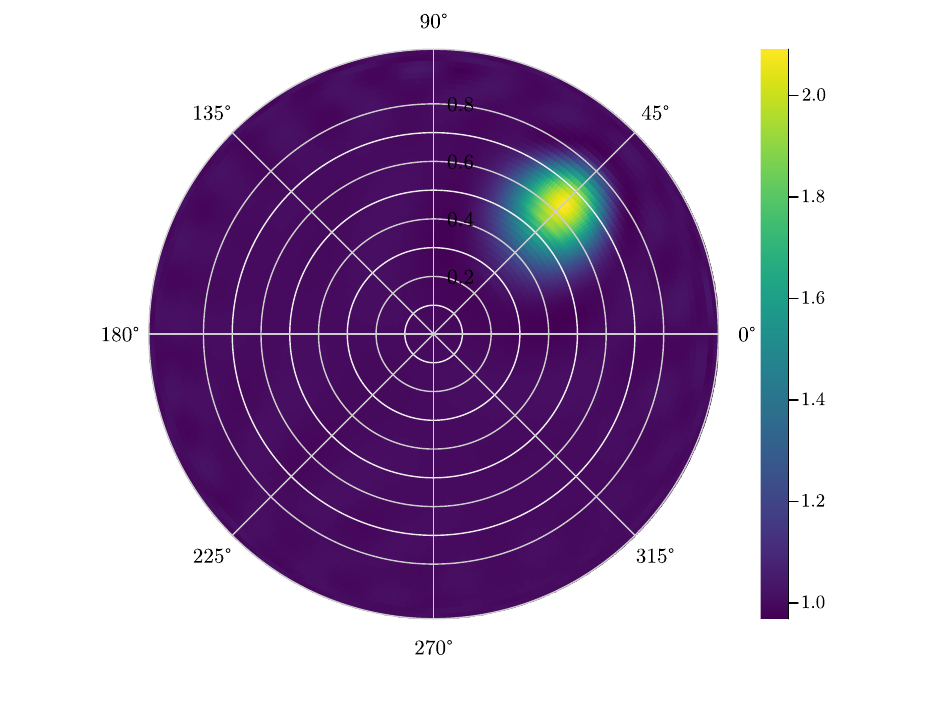}}
\vspace{-0.8\baselineskip}
\subfloat[Angular cross section]{\includegraphics[width=0.45\textwidth]{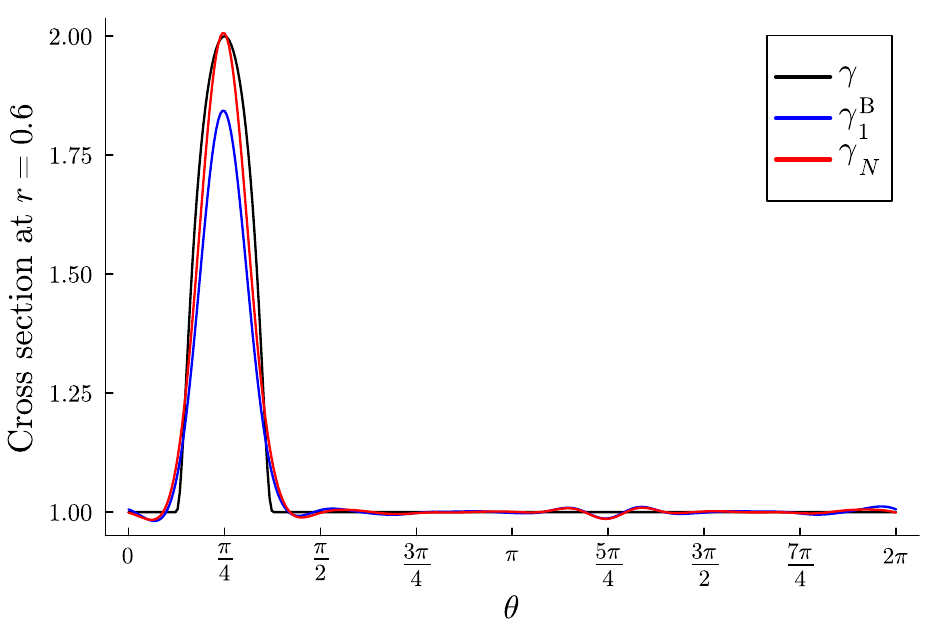}}\hspace{0.05\textwidth}
\subfloat[$\norm{\gamma-\gamma_n}_{L^p(\ID)}$]{
\begin{tabular}[b]{cccc}
\hline
$n$ \textbackslash\, $p$ & 1 & 2 & $\infty$ \\
\hline
0 & 0.03895 & 0.06904 & 0.33387 \\
1 & 0.03432 & 0.05210 & 0.26982 \\
2 & 0.03670 & 0.05064 & 0.25109 \\
\hline
\vspace{1cm}
\end{tabular}
}
\caption{Localization of positive circular bump. Iterative scheme run $N=2$ times. $(N_r,N_\theta)=(50,50)$, $(I,L)=(50,24)$.}
\label{Fig:Bump+}
\end{figure}

\begin{figure}[H]
\centering
\subfloat[$\gamma$]{\includegraphics[width=0.3\textwidth, trim={1.7cm 0.8cm 1.7cm 0.2cm}, clip]{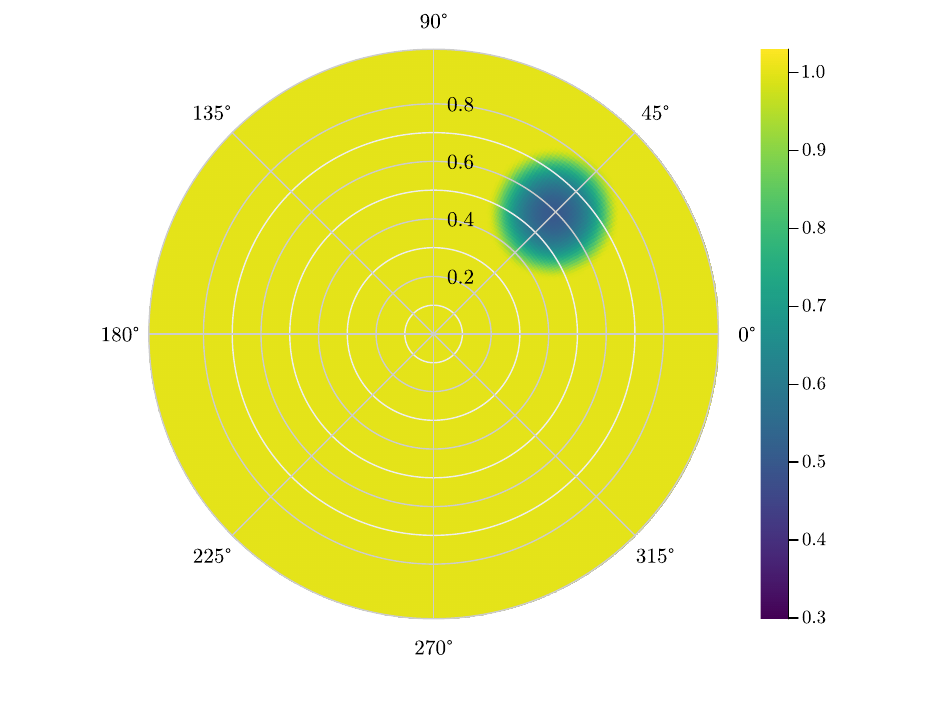}}\hspace{0.04\textwidth}
\subfloat[$\gamma_1^\mathrm{B}=\gamma_0$]{\includegraphics[width=0.3\textwidth, trim={1.7cm 0.8cm 1.7cm 0.2cm}, clip]{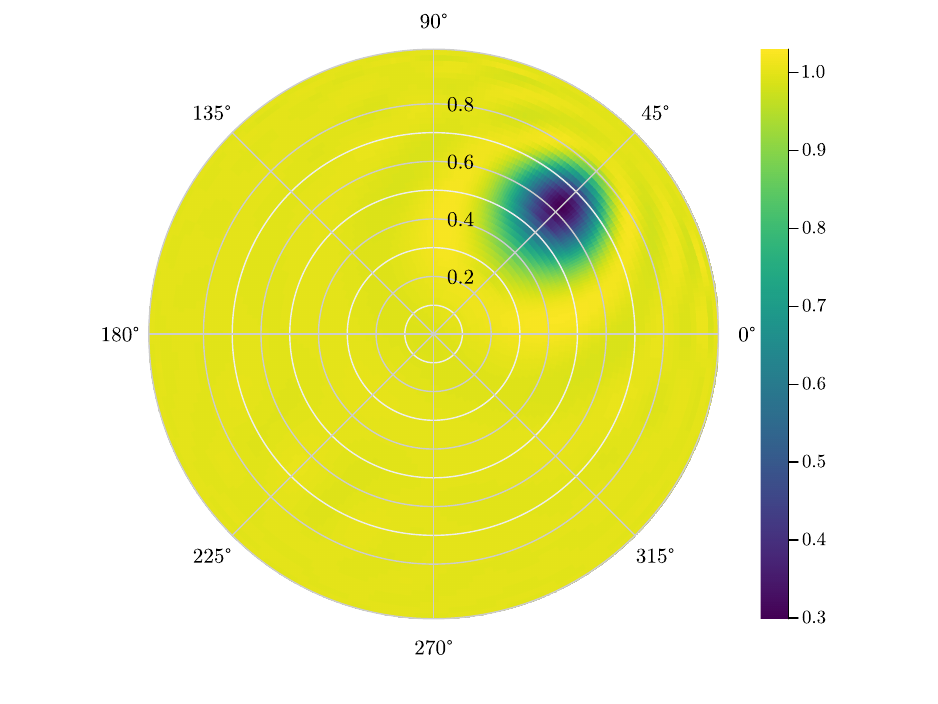}}\hspace{0.04\textwidth}
\subfloat[$\gamma_N$]{\includegraphics[width=0.3\textwidth, trim={1.7cm 0.8cm 1.7cm 0.2cm}, clip]{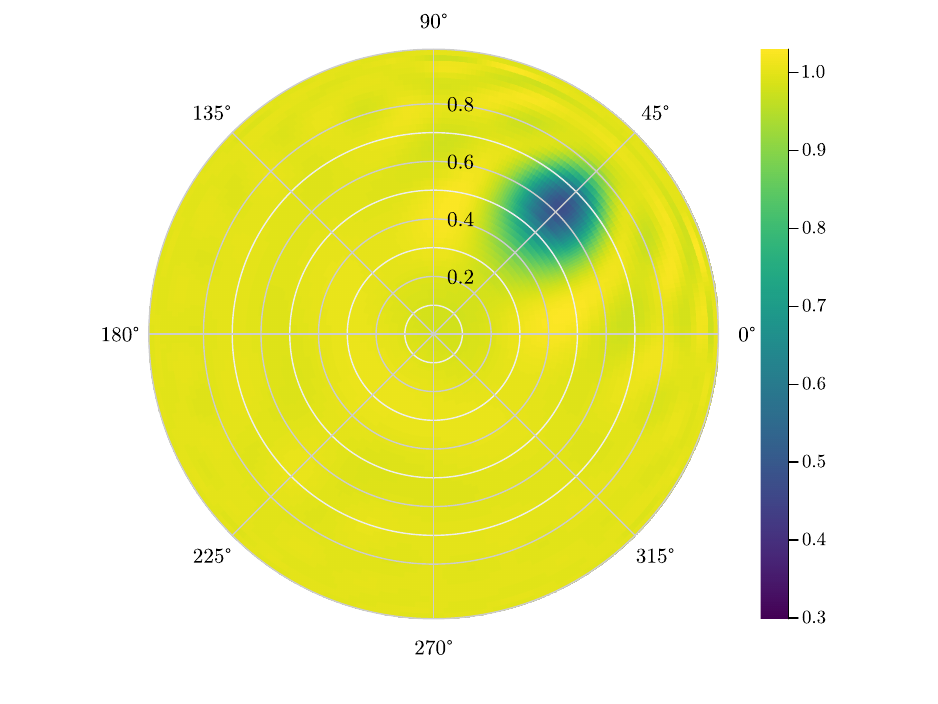}}
\vspace{-0.8\baselineskip}
\subfloat[Angular cross section]{\includegraphics[width=0.45\textwidth]{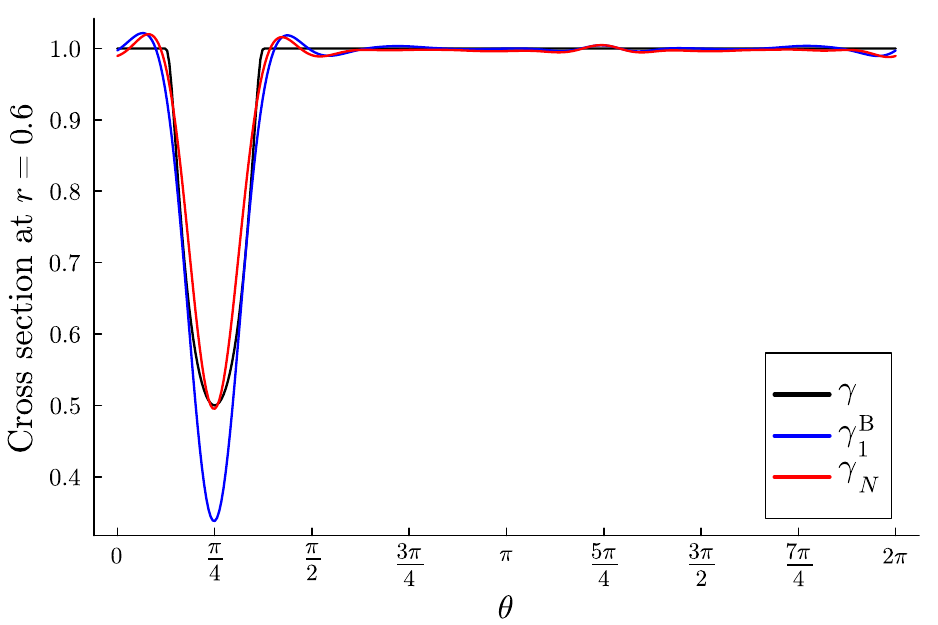}}\hspace{0.05\textwidth}
\subfloat[$\norm{\gamma-\gamma_n}_{L^p(\ID)}$]{
\begin{tabular}[b]{cccc}
\hline
$n$ \textbackslash\, $p$ & 1 & 2 & $\infty$ \\
\hline
0 & 0.02144 & 0.03374 & 0.22140 \\
1 & 0.02364 & 0.02665 & 0.11089 \\
\hline
\vspace{1cm}
\end{tabular}
}
\caption{Localization of negative circular bump. Iterative scheme run $N=1$ times. $(N_r,N_\theta)=(50,50)$, $(I,L)=(50,24)$.}
\label{Fig:Bump-}
\end{figure}

\newpage
\begin{figure}[t]
\centering
\subfloat[$\gamma$]{\includegraphics[width=0.3\textwidth, trim={1.7cm 0.8cm 1.7cm 0.2cm}, clip]{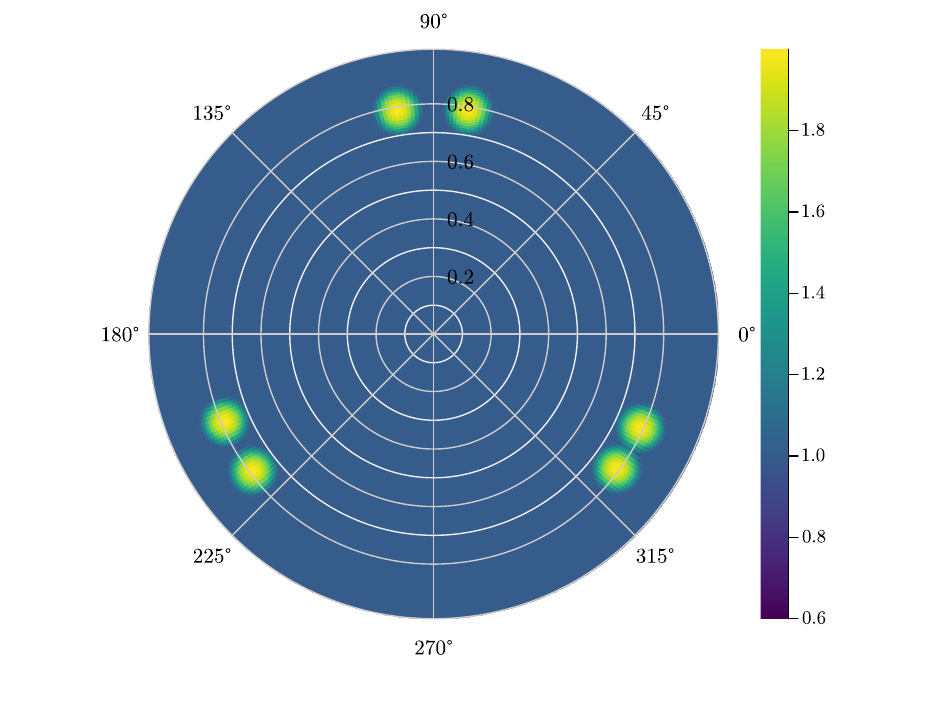}}\hspace{0.04\textwidth}
\subfloat[$\gamma_1^\mathrm{B}=\gamma_0$]{\includegraphics[width=0.3\textwidth, trim={1.7cm 0.8cm 1.7cm 0.2cm}, clip]{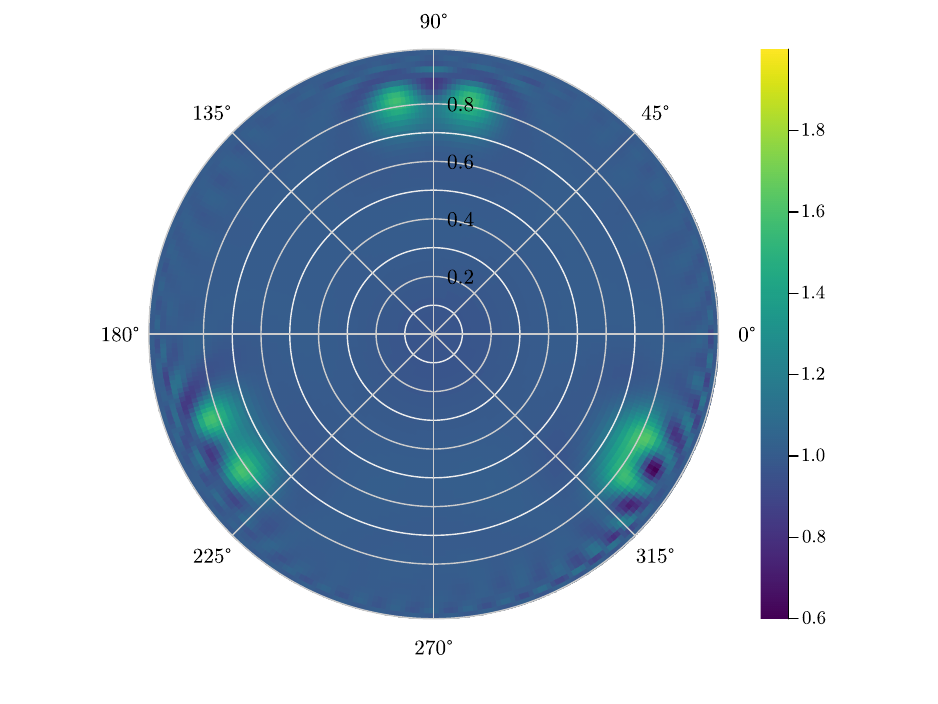}}\hspace{0.04\textwidth}
\subfloat[$\gamma_N$]{\includegraphics[width=0.3\textwidth, trim={1.7cm 0.8cm 1.7cm 0.2cm}, clip]{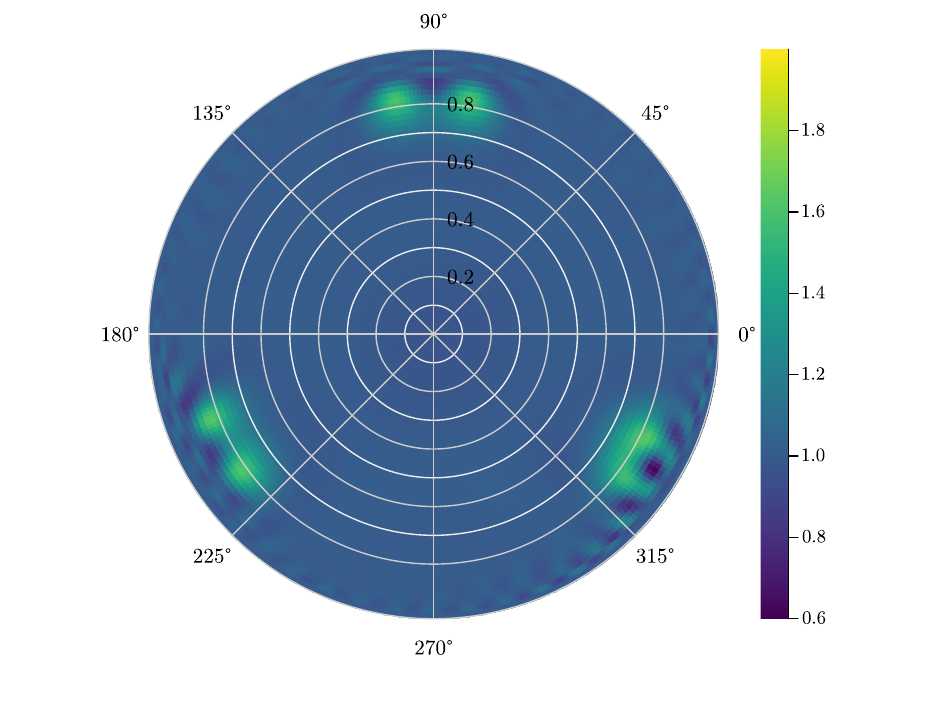}}
\vspace{-0.8\baselineskip}
\subfloat[Angular cross section]{\includegraphics[width=0.45\textwidth]{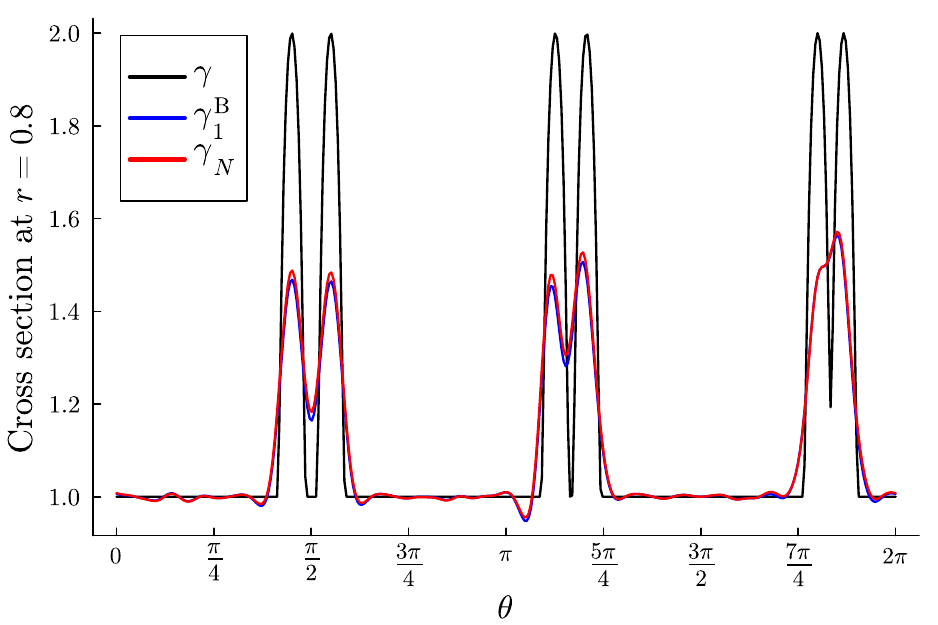}}\hspace{0.05\textwidth}
\subfloat[$\norm{\gamma-\gamma_n}_{L^p(\ID)}$]{
\begin{tabular}[b]{cccc}
\hline
$n$ \textbackslash\, $p$ & 1 & 2 & $\infty$ \\
\hline
0 & 0.08933 & 0.13038 & 0.61826 \\
1 & 0.08948 & 0.13020 & 0.59762 \\
\hline
\vspace{1cm}
\end{tabular}
}
\caption{Resolution near the boundary. Iterative scheme run $N=1$ times. $(N_r,N_\theta)=(50,100)$, $(I,L)=(50,49)$.}
\label{Fig:Resolution_Boundary}
\end{figure}

\begin{figure}[H]
\centering
\subfloat[$\gamma$]{\includegraphics[width=0.3\textwidth, trim={1.7cm 0.8cm 1.7cm 0.2cm}, clip]{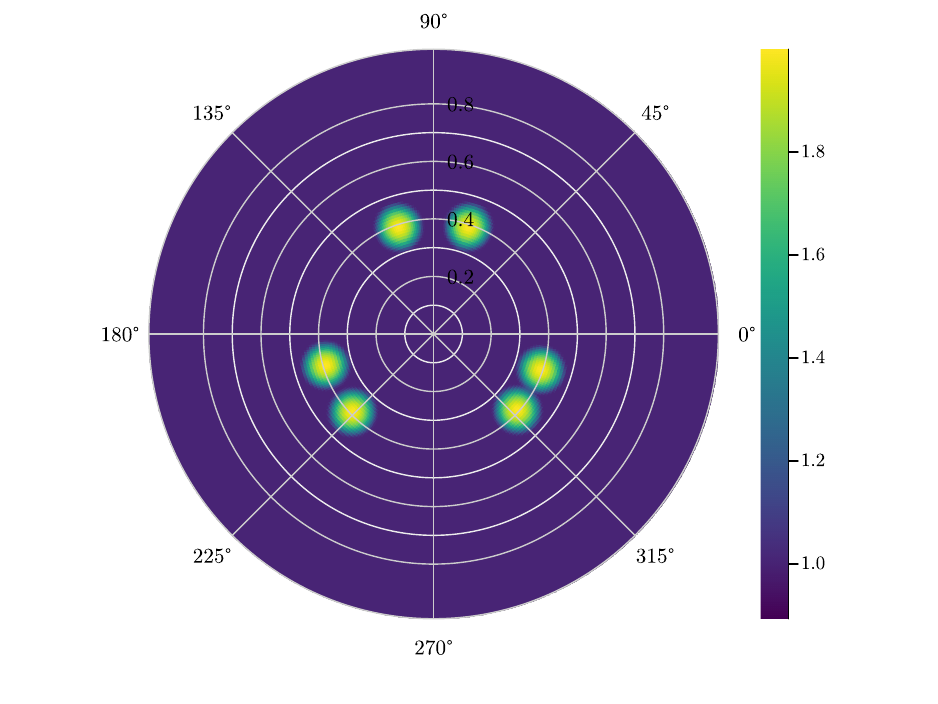}}\hspace{0.04\textwidth}
\subfloat[$\gamma_1^\mathrm{B}=\gamma_0$]{\includegraphics[width=0.3\textwidth, trim={1.7cm 0.8cm 1.7cm 0.2cm}, clip]{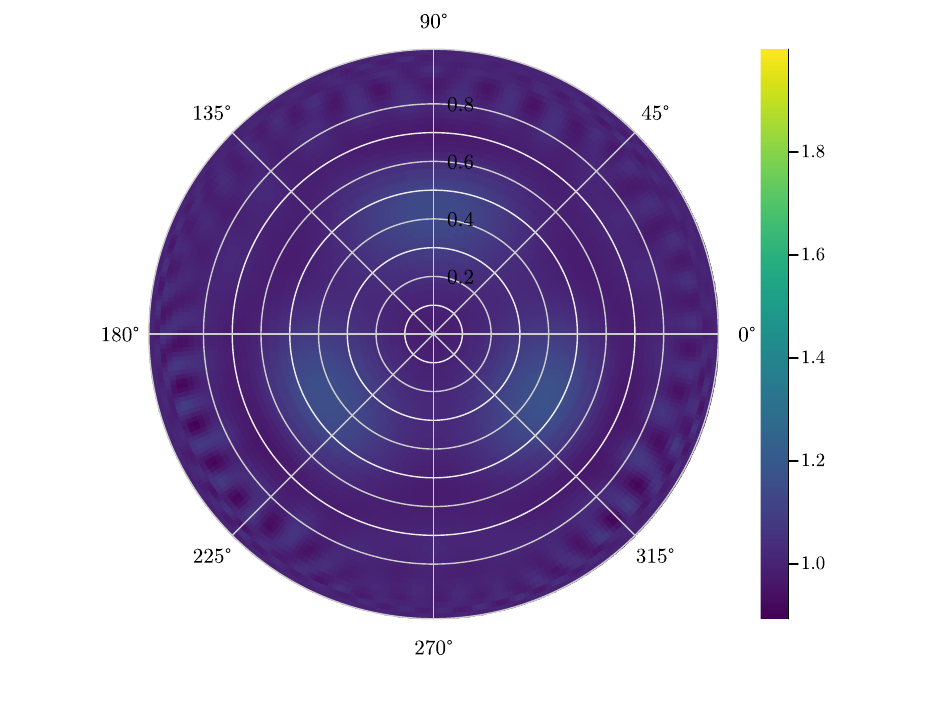}}\hspace{0.04\textwidth}
\subfloat[$\gamma_N$]{\includegraphics[width=0.3\textwidth, trim={1.7cm 0.8cm 1.7cm 0.2cm}, clip]{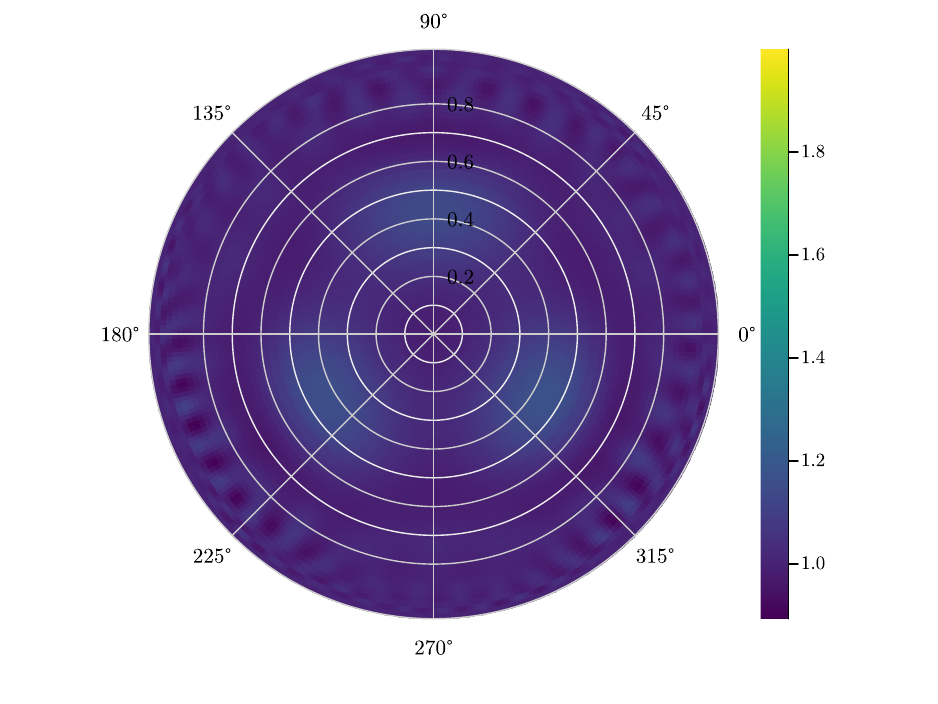}}
\vspace{-0.8\baselineskip}
\subfloat[Angular cross section]{\includegraphics[width=0.45\textwidth]{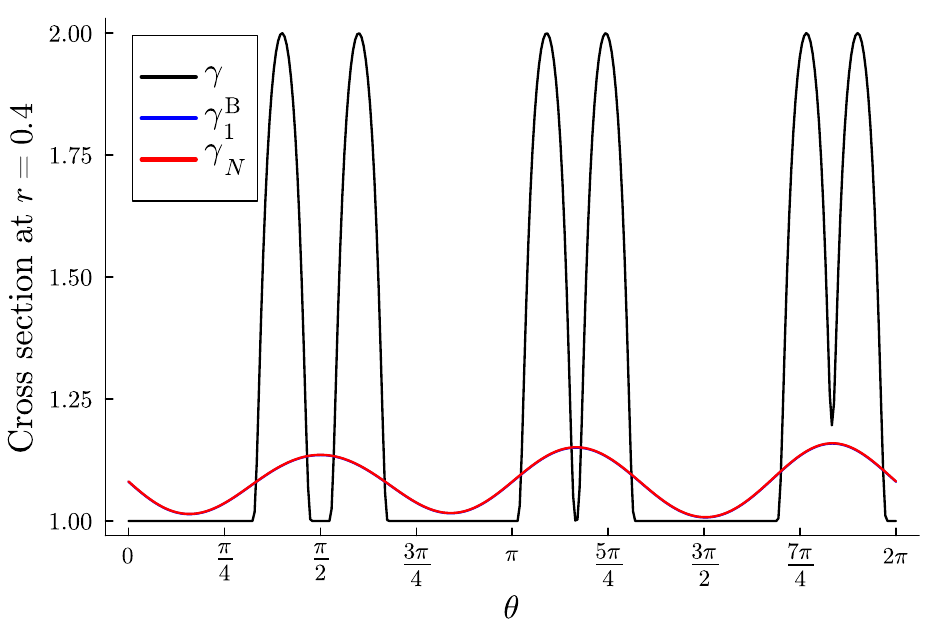}}\hspace{0.05\textwidth}
\subfloat[$\norm{\gamma-\gamma_n}_{L^p(\ID)}$]{
\begin{tabular}[b]{cccc}
\hline
$n$ \textbackslash\, $p$ & 1 & 2 & $\infty$ \\
\hline
0 & 0.13528 & 0.19910 & 0.88625 \\
1 & 0.13578 & 0.19917 & 0.88575 \\
\hline
\vspace{1cm}
\end{tabular}
}
\caption{Resolution near the origin. Iterative scheme run $N=1$ times. $(N_r,N_\theta)=(50,100)$, $(I,L)=(50,49)$.}
\label{Fig:Resolution_Origin}
\end{figure}

\newpage
\begin{figure}[t]
\centering
\subfloat[$\gamma$]{\includegraphics[width=0.3\textwidth, trim={1.7cm 0.8cm 1.7cm 0.2cm}, clip]{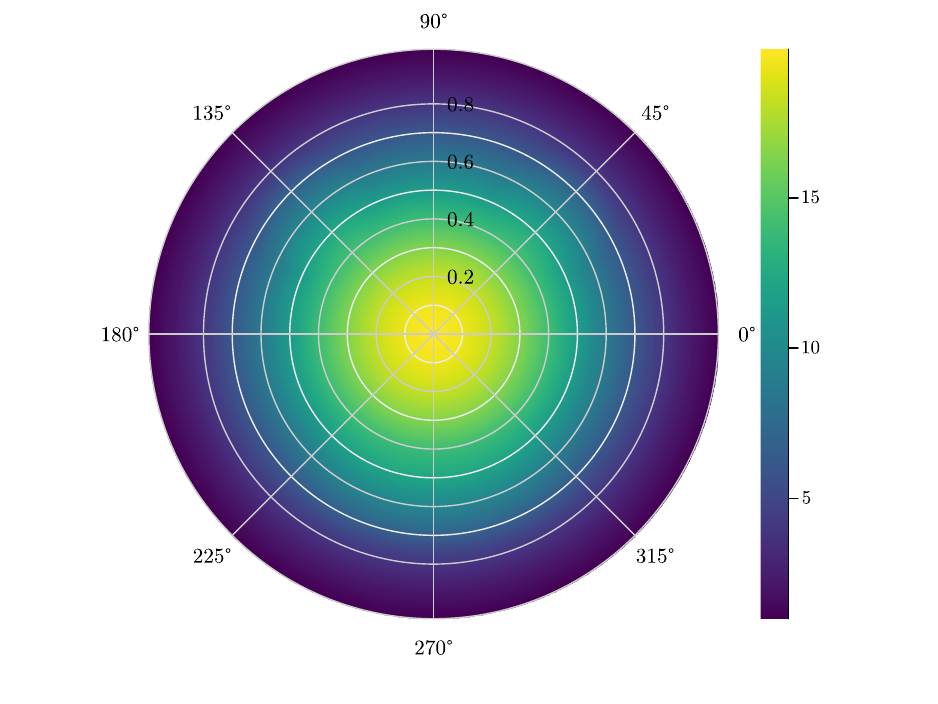}}\hspace{0.04\textwidth}
\subfloat[$\gamma_{\sigma_0}^\mathrm{B}$]{\includegraphics[width=0.3\textwidth, trim={1.7cm 0.8cm 1.7cm 0.2cm}, clip]{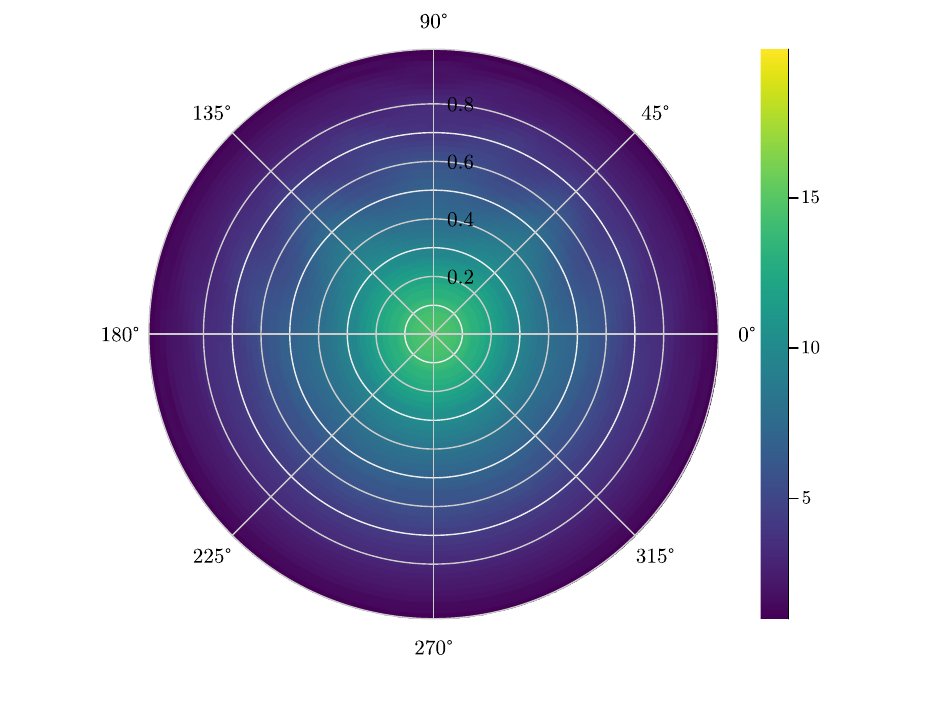}}\hspace{0.04\textwidth}
\subfloat[$\gamma_{\sigma_{4}}^\mathrm{B}$]{\includegraphics[width=0.3\textwidth, trim={1.7cm 0.8cm 1.7cm 0.2cm}, clip]{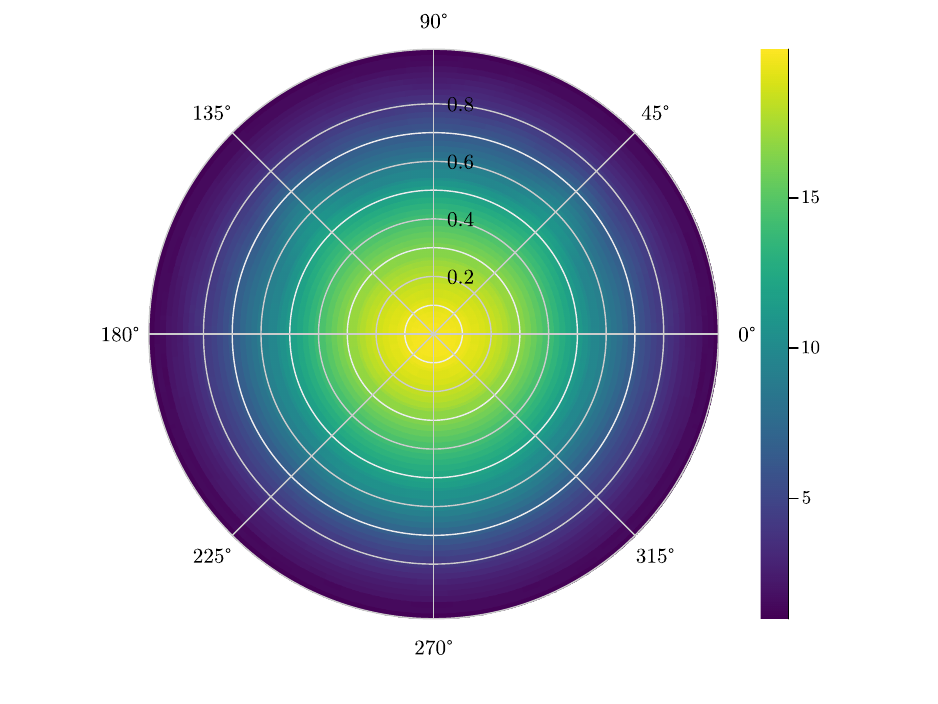}}
\vspace{-0.5\baselineskip}
\subfloat[$\gamma-\frac{\sigma_4}{J_0(2)^2}$]{\includegraphics[width=0.3\textwidth, trim={1.7cm 0.8cm 1.7cm 0.2cm}, clip]{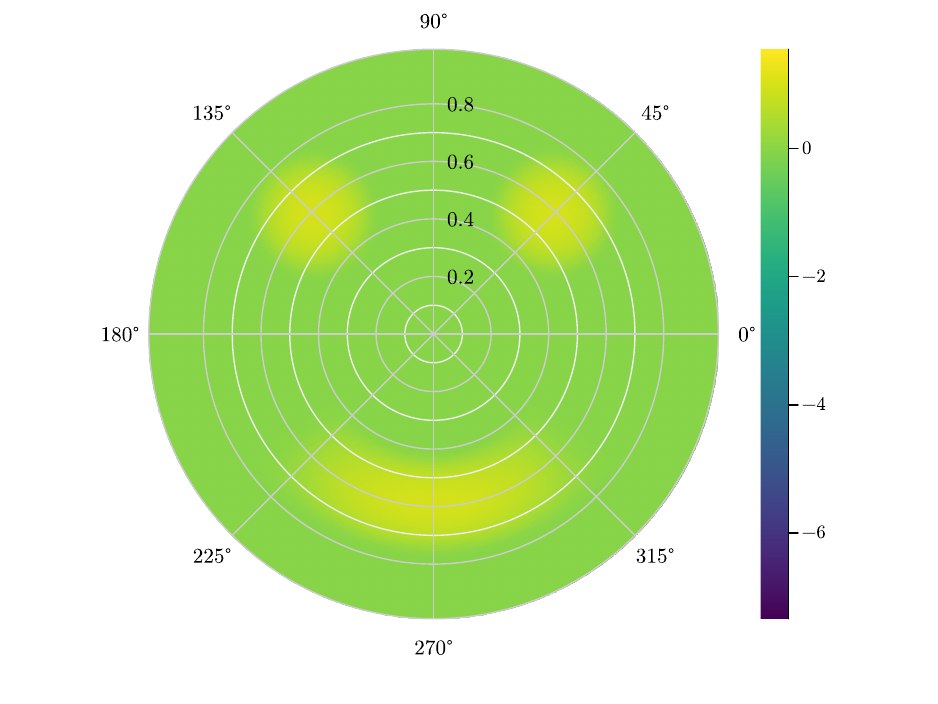}}\hspace{0.04\textwidth}
\subfloat[$\gamma_{\sigma_0}^\mathrm{B}-\frac{\sigma_4}{J_0(2)^2}$]{\includegraphics[width=0.3\textwidth, trim={1.7cm 0.8cm 1.7cm 0.2cm}, clip]{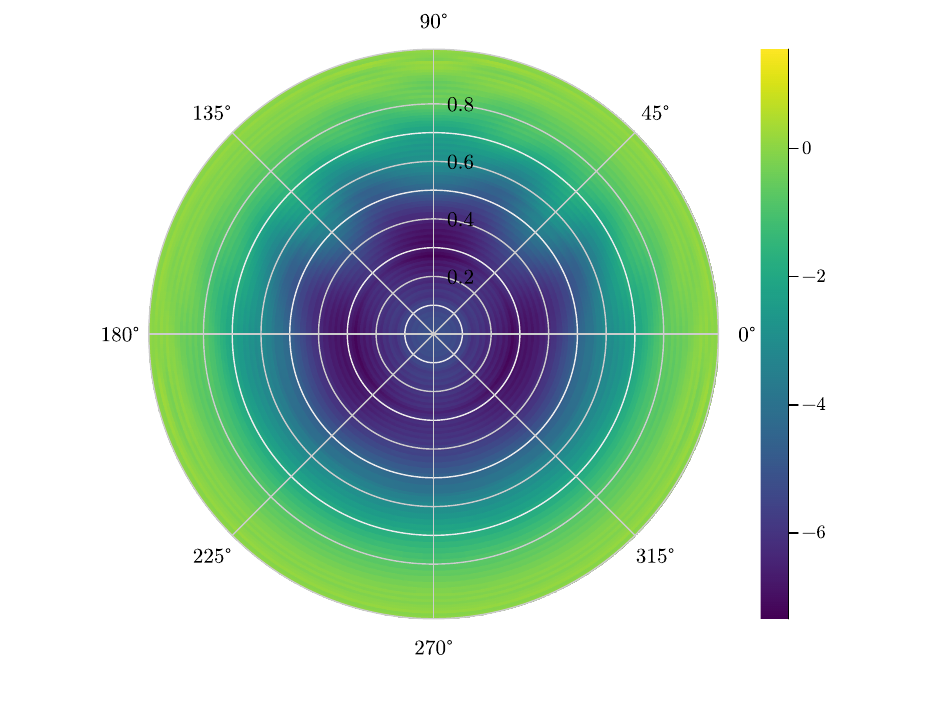}}\hspace{0.04\textwidth}
\subfloat[$\gamma_{\sigma_4}^\mathrm{B}-\frac{\sigma_4}{J_0(2)^2}$]{\includegraphics[width=0.3\textwidth, trim={1.7cm 0.8cm 1.7cm 0.2cm}, clip]{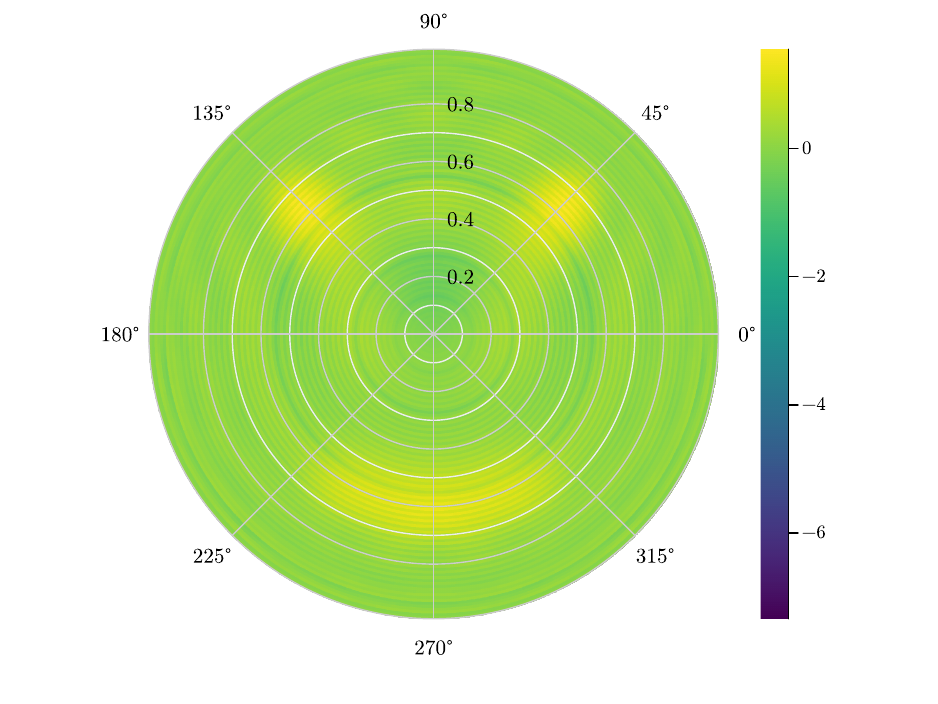}}
\vspace{-0.5\baselineskip}
\subfloat[Angular cross section]{\includegraphics[width=0.45\textwidth]{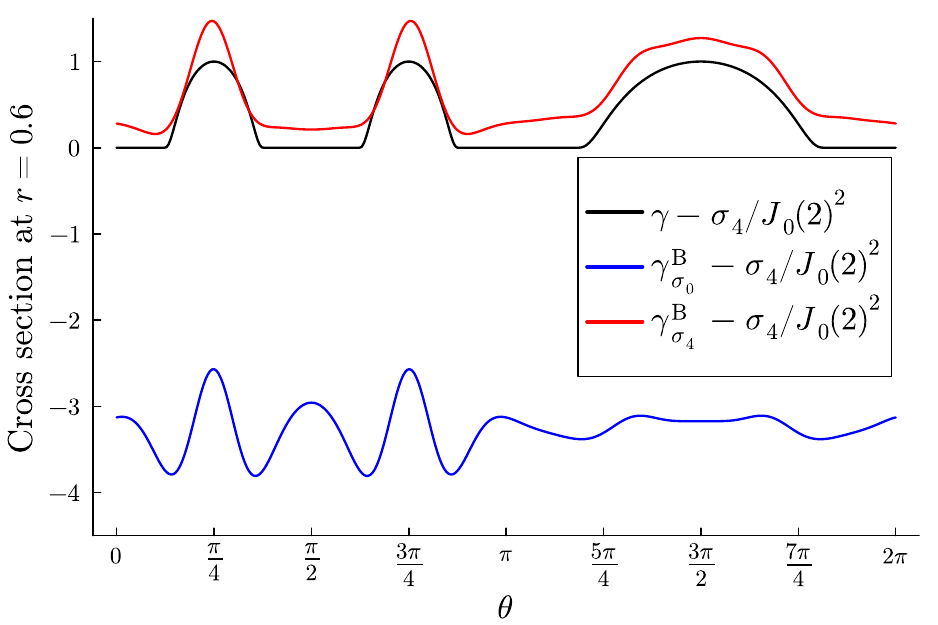}}\hspace{0.05\textwidth}
\subfloat[$\norm{\gamma-\gamma_{\sigma_\kappa}^\mathrm{B}}_{L^p(\ID)}$]{
\begin{tabular}[b]{cccc}
\hline
$\kappa$ \textbackslash\, $p$ & 1 & 2 & $\infty$ \\
\hline
0 & 8.56242 & 6.37936 & 7.34184 \\
4 & 0.48048 & 0.33540 & 0.59925 \\
\hline
\vspace{1cm}
\end{tabular}
}
\caption{Linearization at $\kappa=4$. $(N_r,N_\theta)=(50,50)$, $(I,L)=(50,24)$}
\label{Fig:kappa+}
\end{figure}

\clearpage
\begin{figure}[t]
\centering
\subfloat[$\gamma$]{\includegraphics[width=0.3\textwidth, trim={1.7cm 0.8cm 1.7cm 0.2cm}, clip]{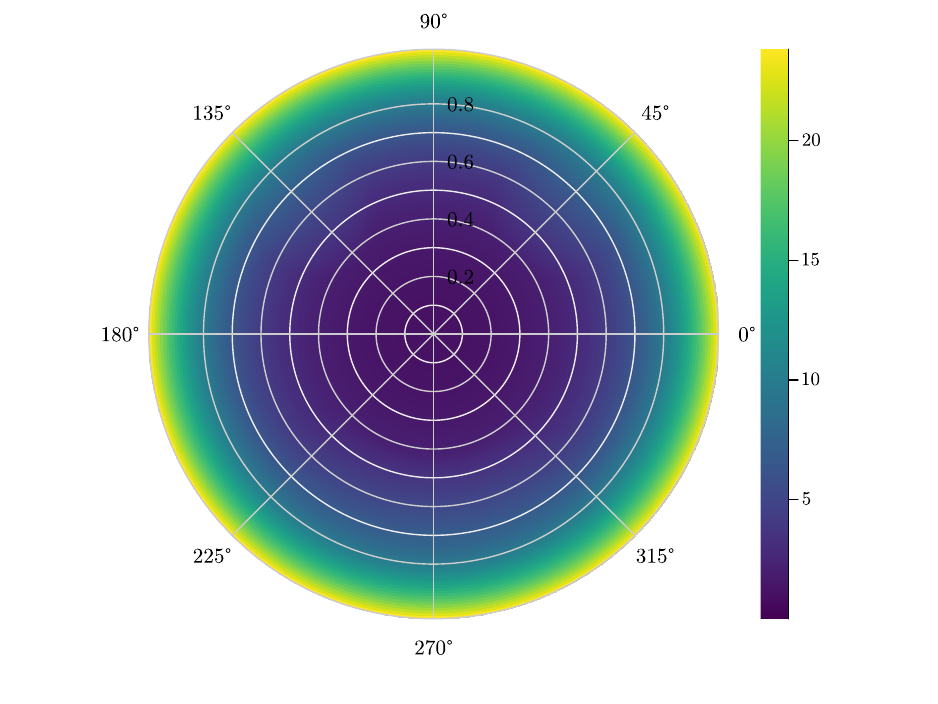}}\hspace{0.04\textwidth}
\subfloat[$\gamma_{\sigma_0}^\mathrm{B}$]{\includegraphics[width=0.3\textwidth, trim={1.7cm 0.8cm 1.7cm 0.2cm}, clip]{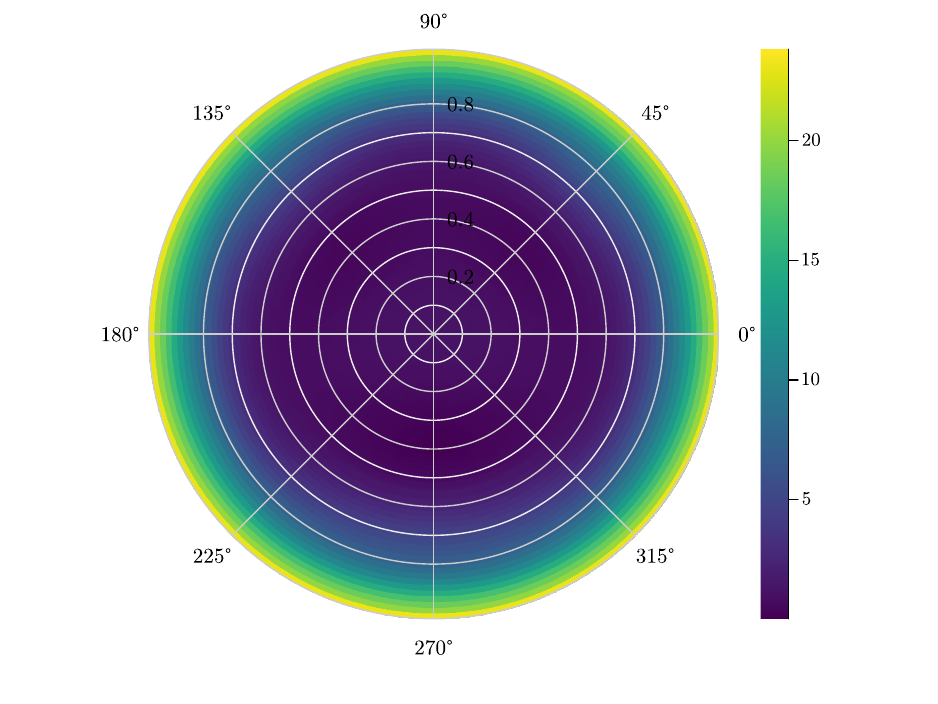}}\hspace{0.04\textwidth}
\subfloat[$\gamma_{\sigma_{-9}}^\mathrm{B}$]{\includegraphics[width=0.3\textwidth, trim={1.7cm 0.8cm 1.7cm 0.2cm}, clip]{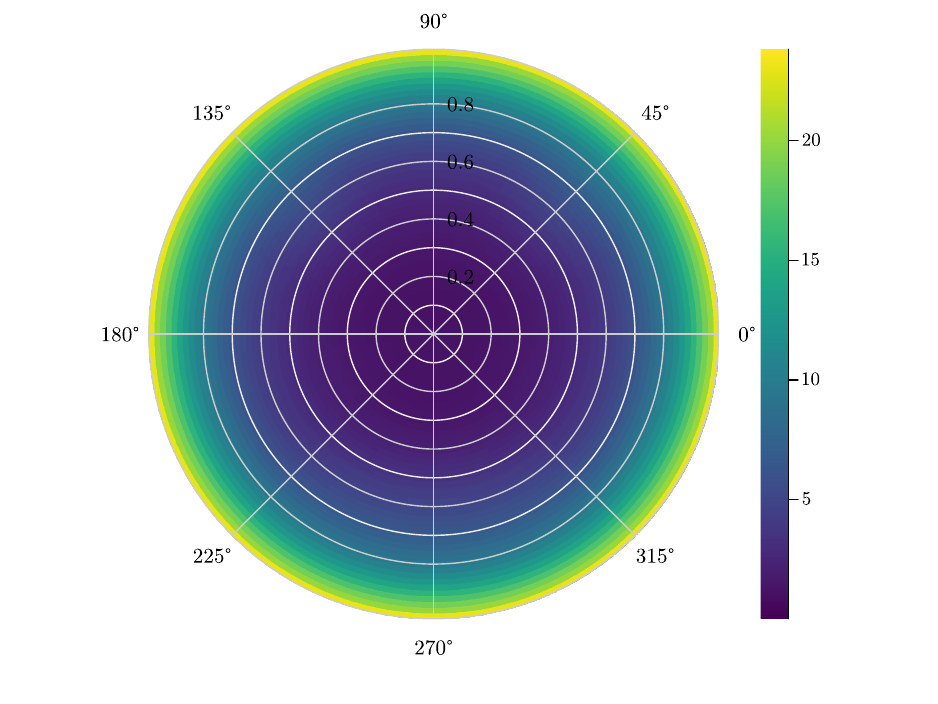}}
\vspace{-0.5\baselineskip}
\subfloat[$\gamma-\sigma_{-9}$]{\includegraphics[width=0.3\textwidth, trim={1.7cm 0.8cm 1.7cm 0.2cm}, clip]{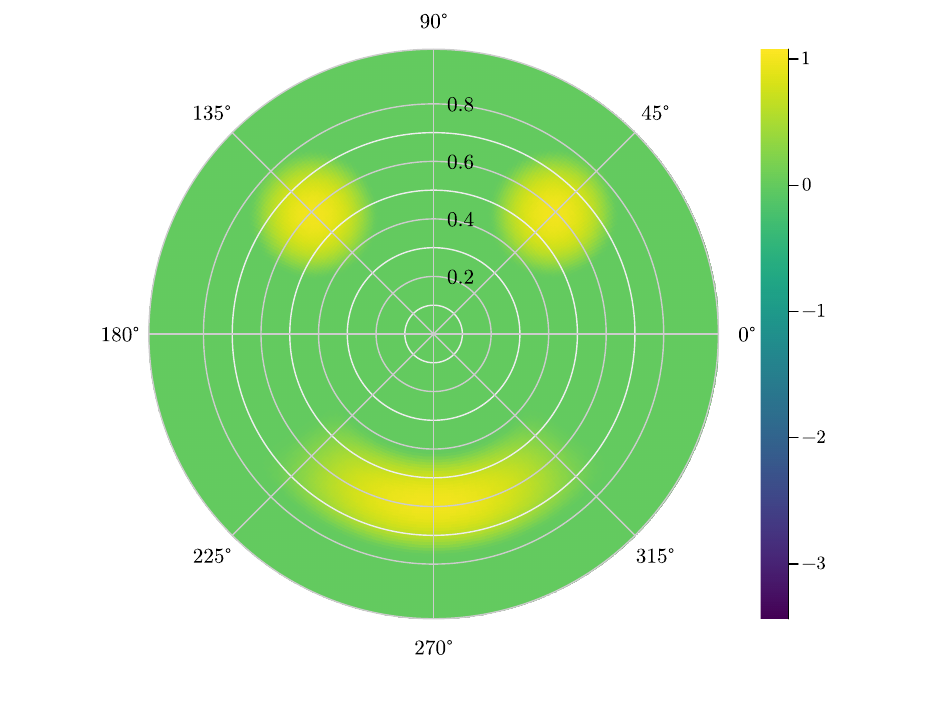}}\hspace{0.04\textwidth}
\subfloat[$\gamma_{\sigma_0}^\mathrm{B}-\sigma_{-9}$]{\includegraphics[width=0.3\textwidth, trim={1.7cm 0.8cm 1.7cm 0.2cm}, clip]{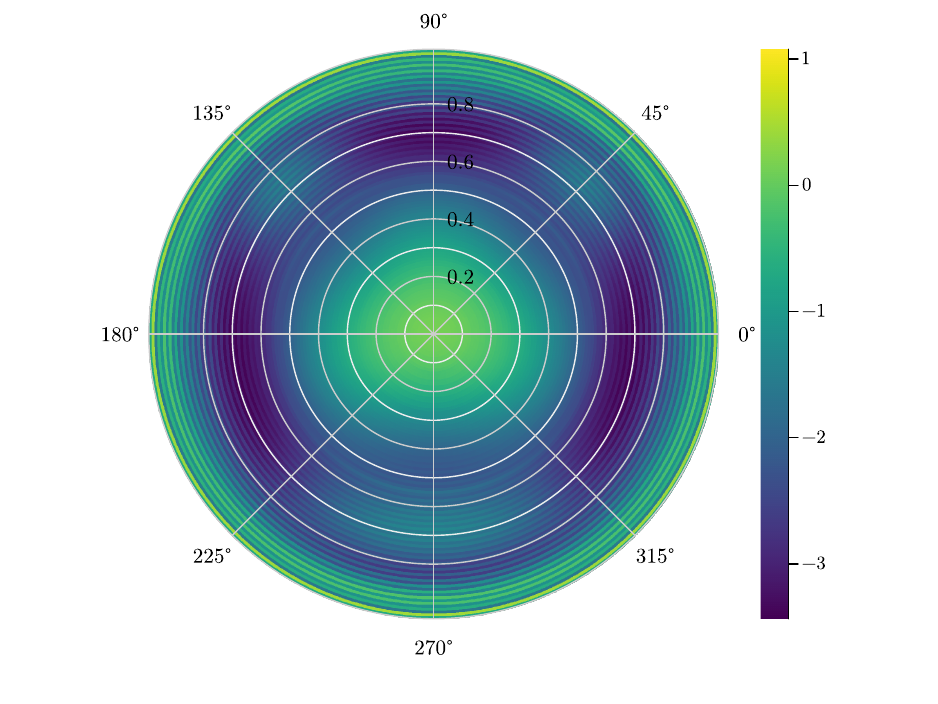}}\hspace{0.04\textwidth}
\subfloat[$\gamma_{\sigma_{-9}}^\mathrm{B}-\sigma_{-9}$]{\includegraphics[width=0.3\textwidth, trim={1.7cm 0.8cm 1.7cm 0.2cm}, clip]{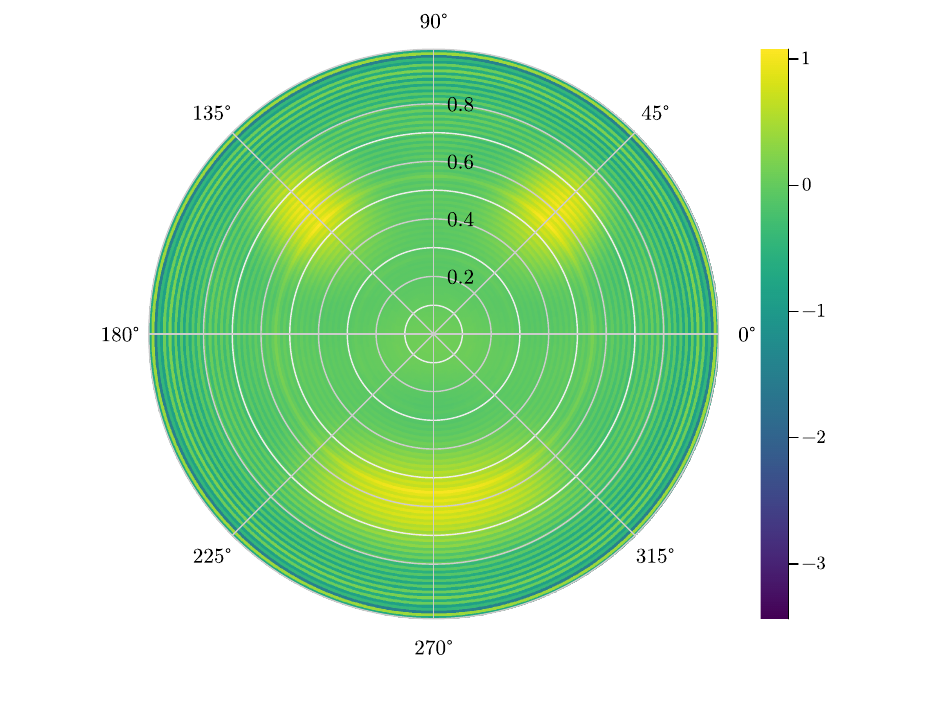}}
\vspace{-0.5\baselineskip}
\subfloat[Angular cross section]{\includegraphics[width=0.45\textwidth]{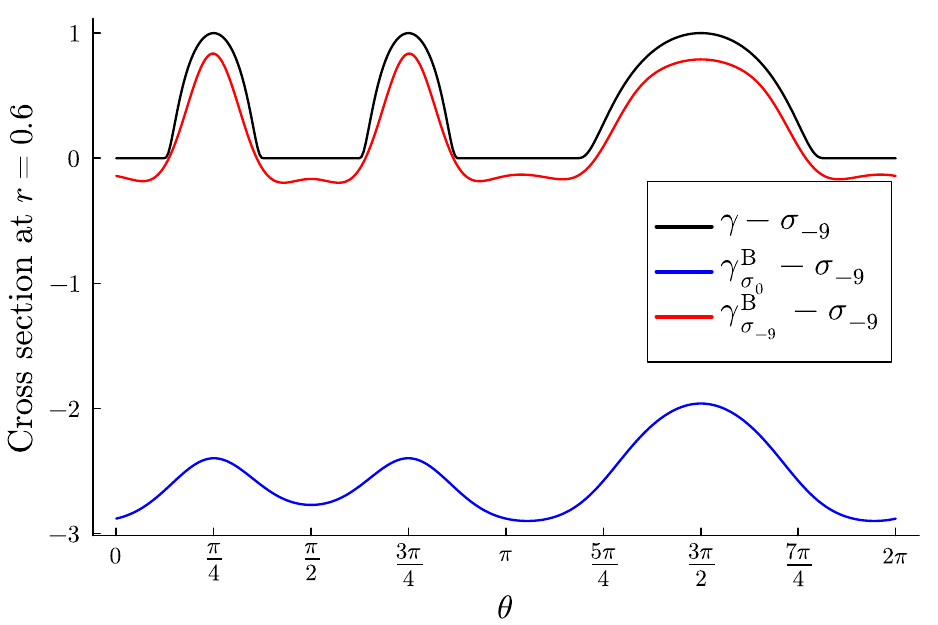}}\hspace{0.05\textwidth}
\subfloat[$\norm{\gamma-\gamma_{\sigma_\kappa}^\mathrm{B}}_{L^p(\ID)}$]{
\begin{tabular}[b]{cccc}
\hline
$\kappa$ \textbackslash\, $p$ & 1 & 2 & $\infty$ \\
\hline
0 & 5.88235 & 3.70735 & 3.43452 \\
-9 & 0.74047 & 0.65851 & 1.46998 \\
\hline
\vspace{1cm}
\end{tabular}
}
\caption{Linearization at $\kappa=-9$. $(N_r,N_\theta)=(50,50)$, $(I,L)=(50,24)$}
\label{Fig:kappa-}
\end{figure}

\newpage
\begin{figure}[t]
\centering
\subfloat[$\gamma$]{\includegraphics[width=0.3\textwidth, trim={1.7cm 0.8cm 1.7cm 0.2cm}, clip]{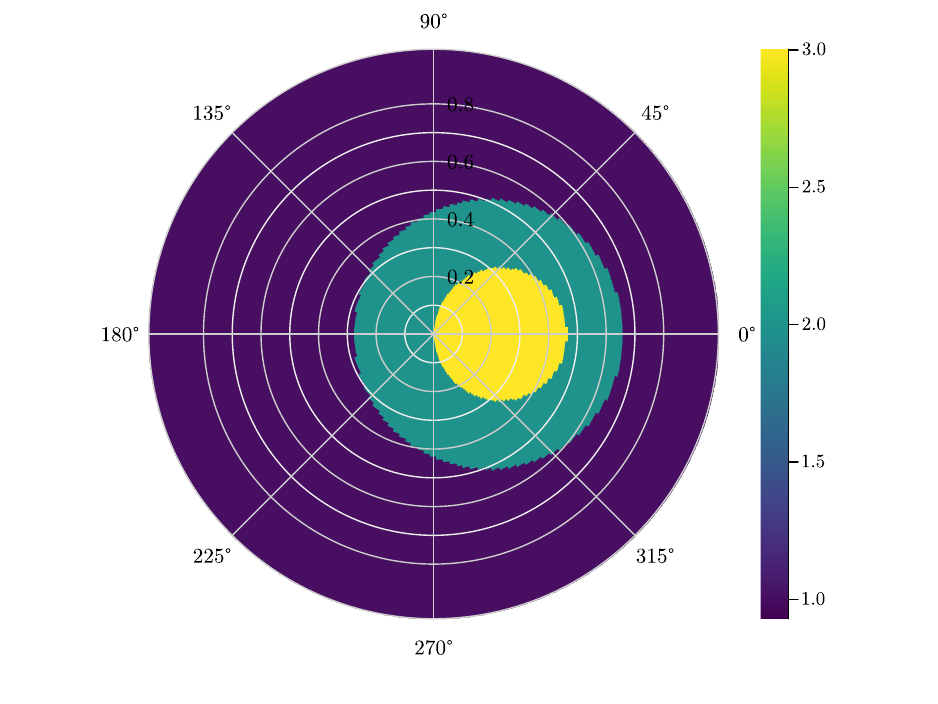}}\hspace{0.04\textwidth}
\subfloat[$\gamma_1^\mathrm{B}=\gamma_0$]{\includegraphics[width=0.3\textwidth, trim={1.7cm 0.8cm 1.7cm 0.2cm}, clip]{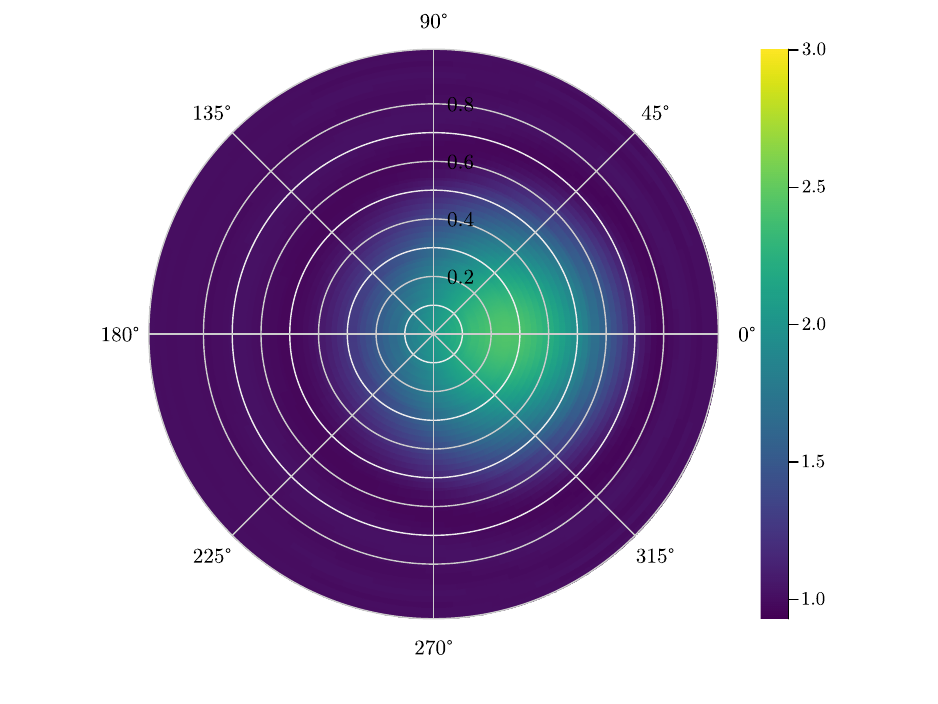}}\hspace{0.04\textwidth}
\subfloat[$\gamma_N$]{\includegraphics[width=0.3\textwidth, trim={1.7cm 0.8cm 1.7cm 0.2cm}, clip]{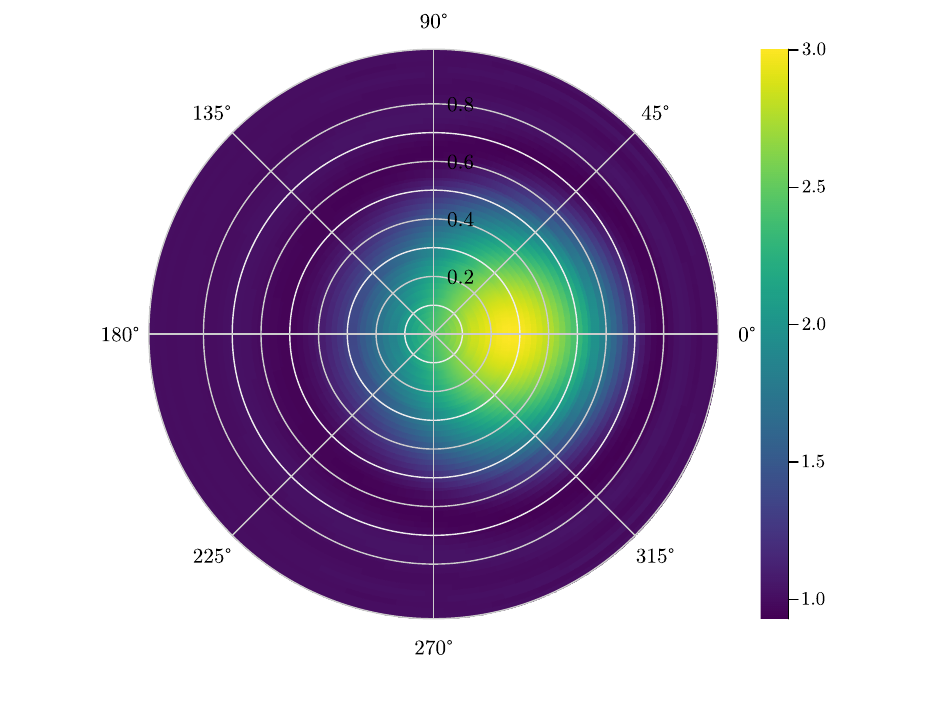}}
\vspace{-0.8\baselineskip}
\subfloat[Angular cross section]{\includegraphics[width=0.45\textwidth]{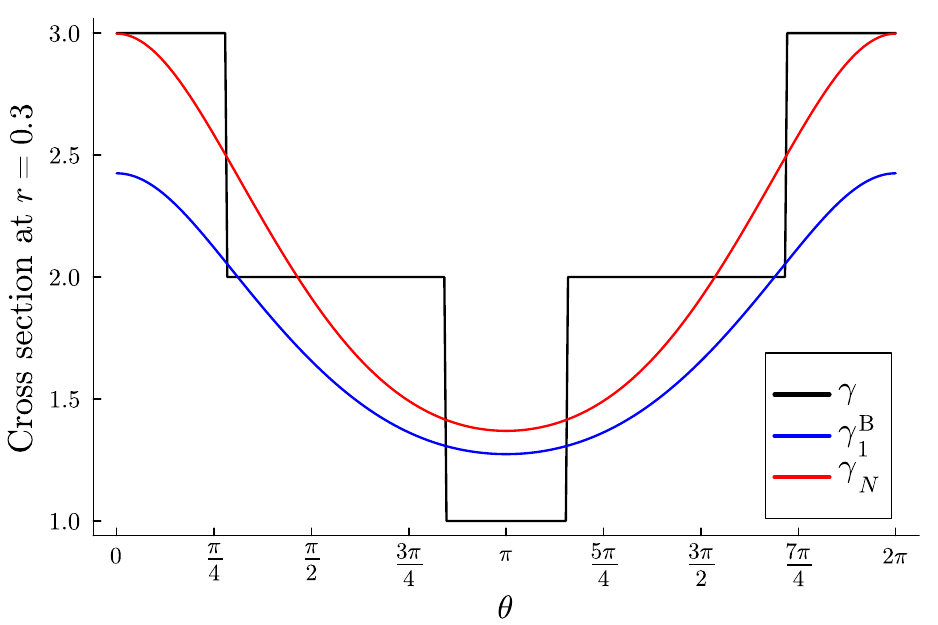}}\hspace{0.05\textwidth}
\subfloat[$\norm{\gamma-\gamma_n}_{L^p(\ID)}$]{
\begin{tabular}[b]{cccc}
\hline
$n$ \textbackslash\, $p$ & 1 & 2 & $\infty$ \\
\hline
0 & 0.41037 & 0.46046 & 0.96906 \\
1 & 0.33719 & 0.35114 & 0.74160 \\
2 & 0.31877 & 0.32043 & 0.65578 \\
3 & 0.31213 & 0.31109 & 0.64217 \\
4 & 0.30938 & 0.30796 & 0.63510 \\
\hline
\vspace{0.5cm}
\end{tabular}
}
\caption{Reconstruction from an explicit DtN map. Iterative scheme run $N=4$ times. $(I,L)=(50,24)$.}
\label{Fig:Explicit1}
\end{figure}

\begin{figure}[H]
\centering
\subfloat[$\gamma$]{\includegraphics[width=0.3\textwidth, trim={1.7cm 0.8cm 1.7cm 0.2cm}, clip]{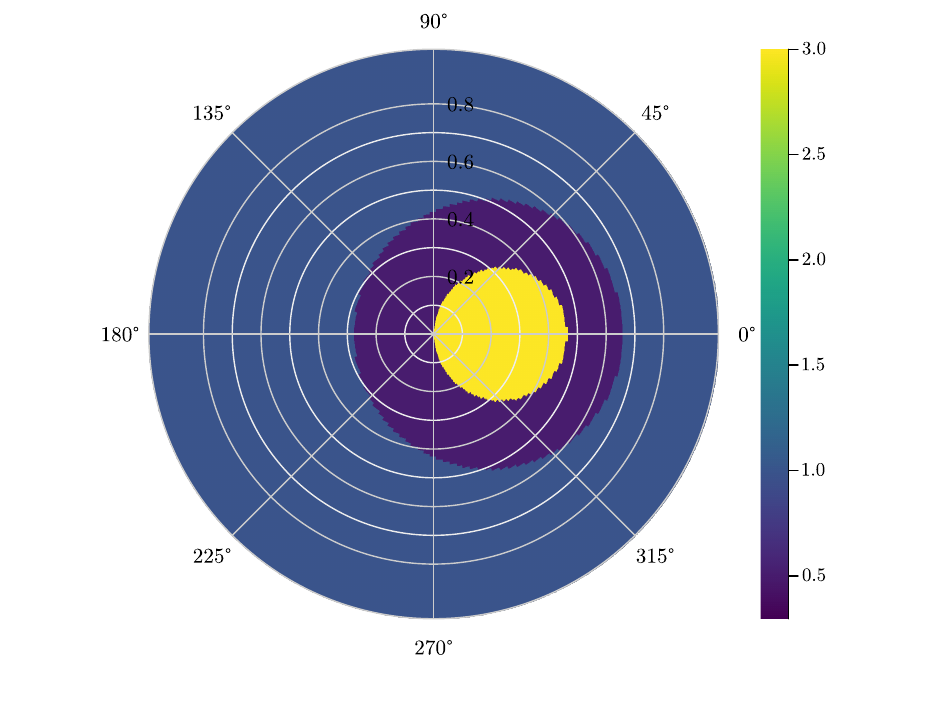}}\hspace{0.04\textwidth}
\subfloat[$\gamma_1^\mathrm{B}=\gamma_0$]{\includegraphics[width=0.3\textwidth, trim={1.7cm 0.8cm 1.7cm 0.2cm}, clip]{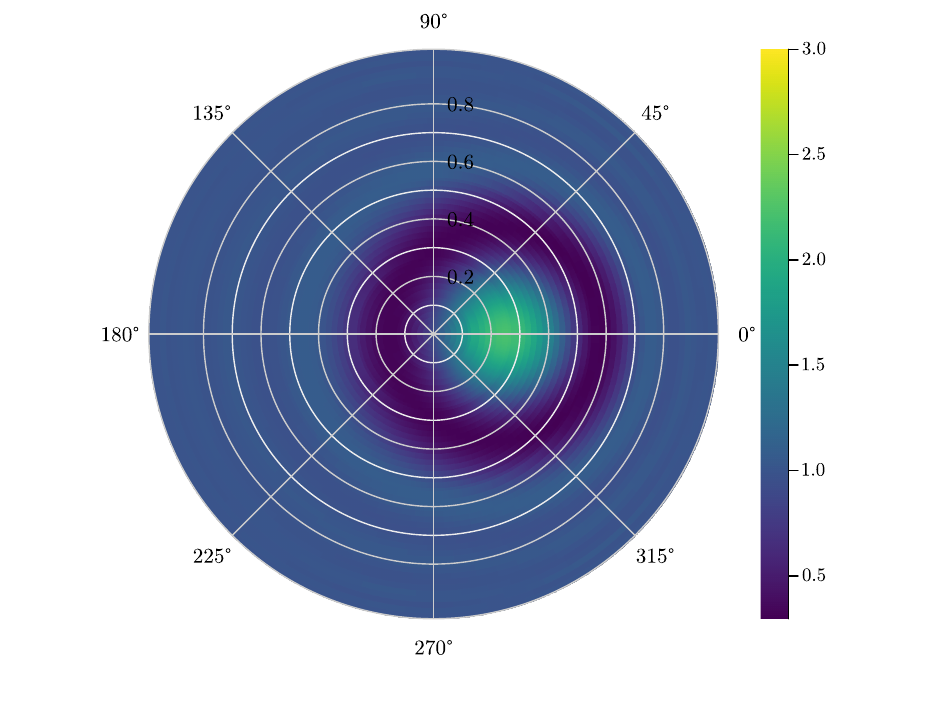}}\hspace{0.04\textwidth}
\subfloat[$\gamma_N$]{\includegraphics[width=0.3\textwidth, trim={1.7cm 0.8cm 1.7cm 0.2cm}, clip]{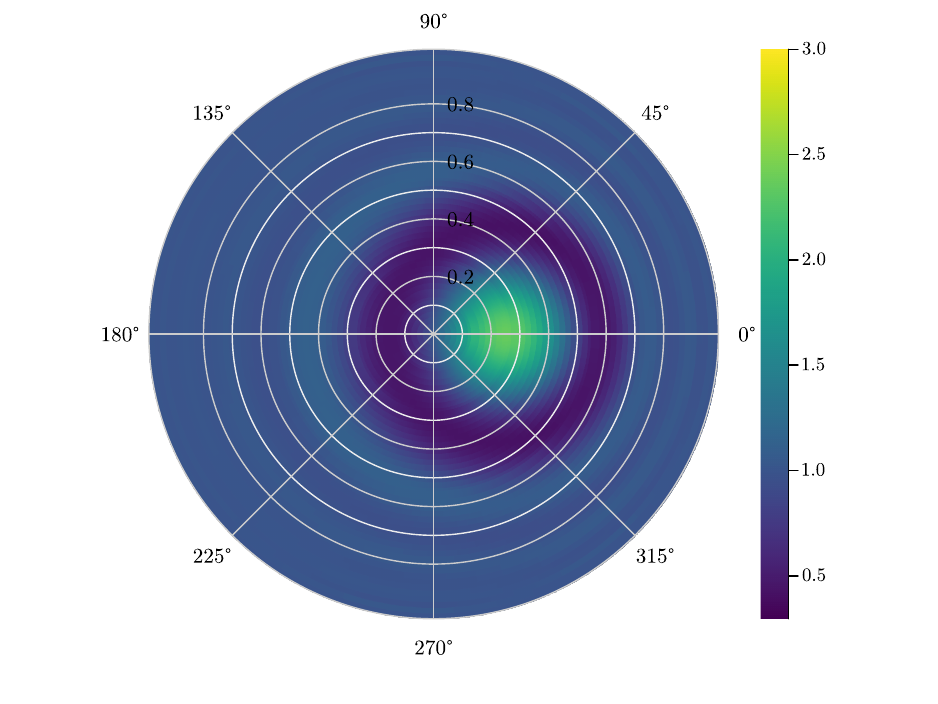}}\hspace{0.04\textwidth}
\vspace{-0.8\baselineskip}
\subfloat[Angular cross section]{\includegraphics[width=0.45\textwidth]{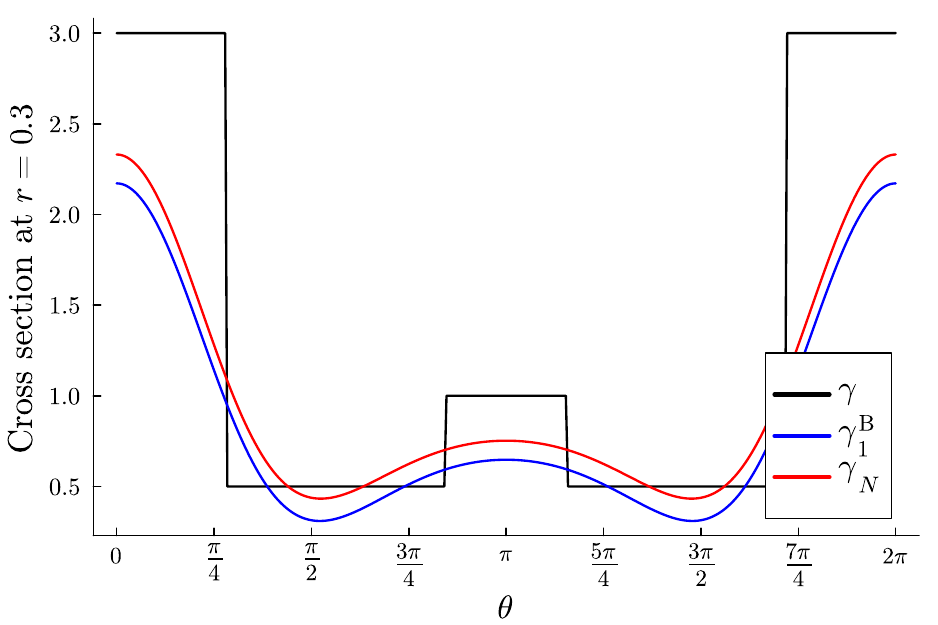}}\hspace{0.05\textwidth}
\subfloat[$\norm{\gamma-\gamma_n}_{L^p(\ID)}$]{
\begin{tabular}[b]{cccc}
\hline
$n$ \textbackslash\, $p$ & 1 & 2 & $\infty$ \\
\hline
0 & 0.43218 & 0.66175 & 2.11403 \\
1 & 0.40207 & 0.60385 & 1.97303 \\
2 & 0.40111 & 0.60776 & 1.98533 \\
3 & 0.40099 & 0.60754 & 1.98497 \\
4 & 0.39807 & 0.60366 & 1.97870 \\
\hline
\vspace{0.5cm}
\end{tabular}
}
\caption{Reconstruction from an explicit DtN map. Iterative scheme run $N=4$ times. $(I,L)=(50,24)$.}
\label{Fig:Explicit2}
\end{figure}

\newpage
\begin{figure}[t]
\centering
\subfloat[$\gamma$]{\includegraphics[width=0.3\textwidth, trim={1.7cm 0.8cm 1.7cm 0.2cm}, clip]{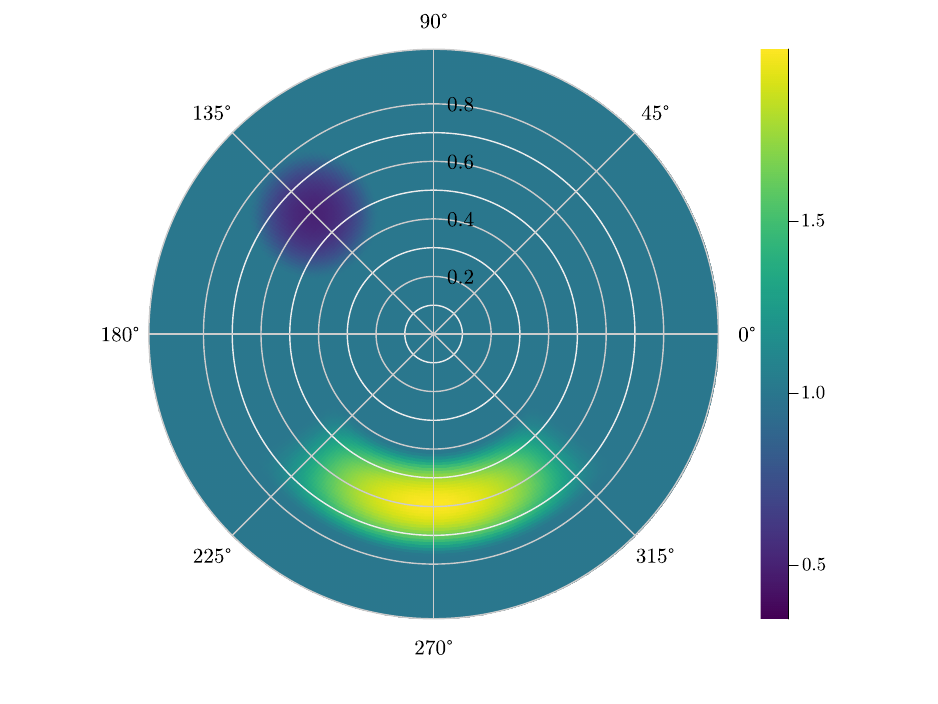}}\hspace{0.1\textwidth}
\subfloat[$\gamma_1^\mathrm{B}$, $\epsilon=0$]{\includegraphics[width=0.3\textwidth, trim={1.7cm 0.8cm 1.7cm 0.2cm}, clip]{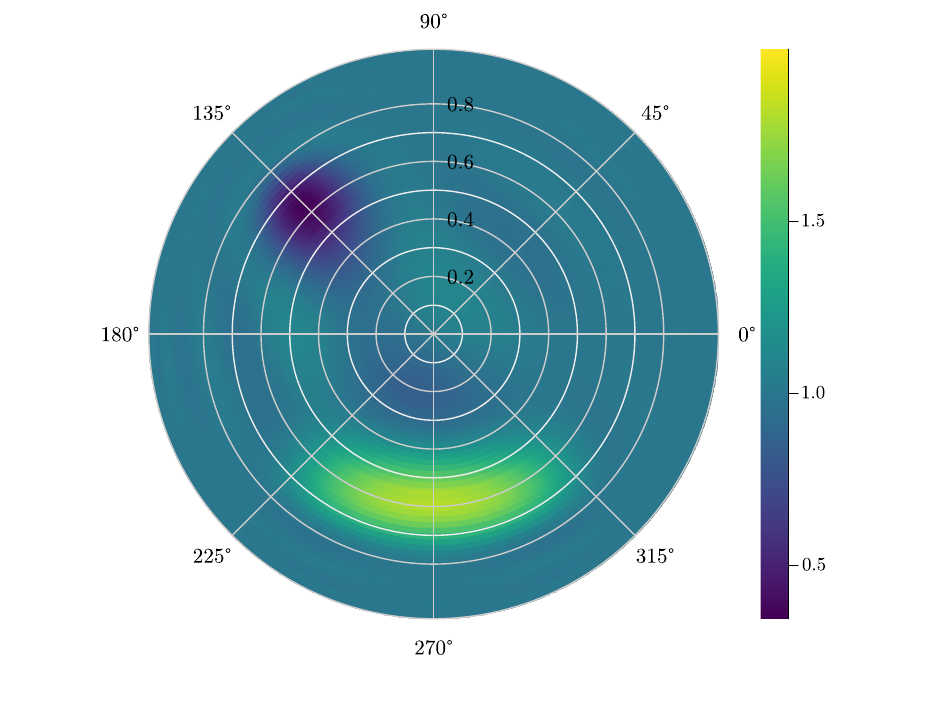}}
\vspace{-0.5\baselineskip}
\subfloat[$\gamma_1^\mathrm{B}$, $\epsilon=10^{-3}$]{\includegraphics[width=0.3\textwidth, trim={1.7cm 0.8cm 1.7cm 0.2cm}, clip]{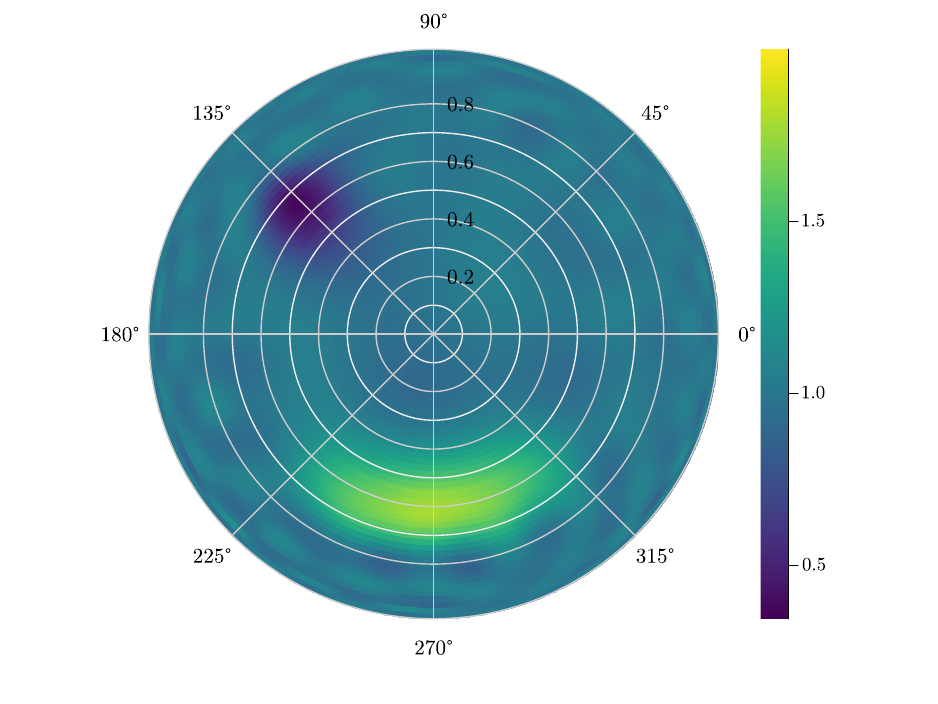}}\hspace{0.1\textwidth}
\subfloat[$\gamma_1^\mathrm{B}$, $\epsilon=10^{-2}$]{\includegraphics[width=0.3\textwidth, trim={1.7cm 0.8cm 1.7cm 0.2cm}, clip]{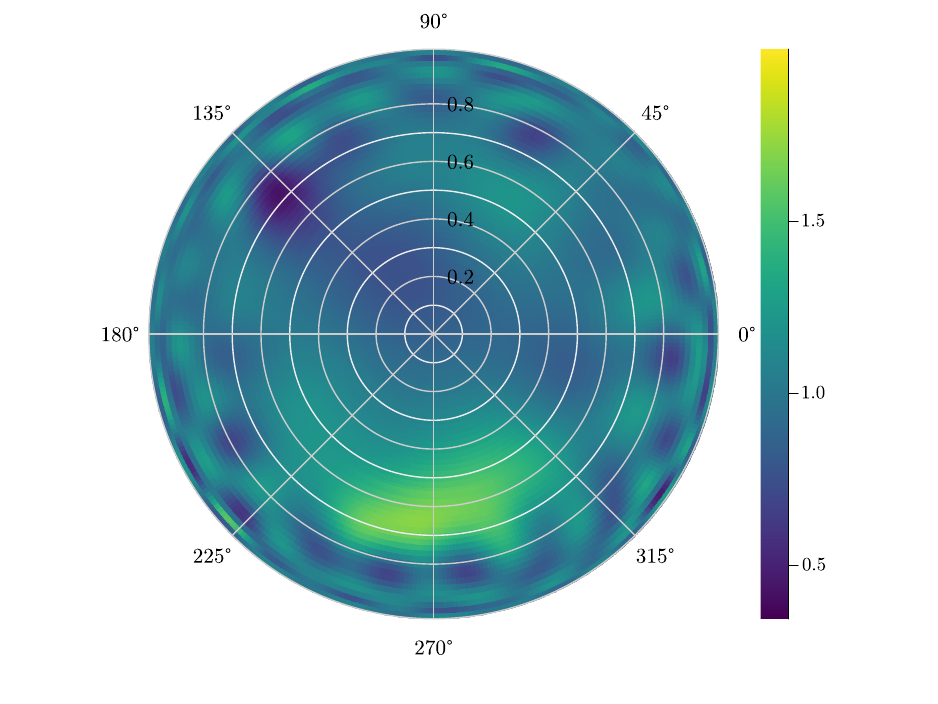}}
\vspace{-0.5\baselineskip}
\subfloat[Angular cross section]{\includegraphics[width=0.45\textwidth]{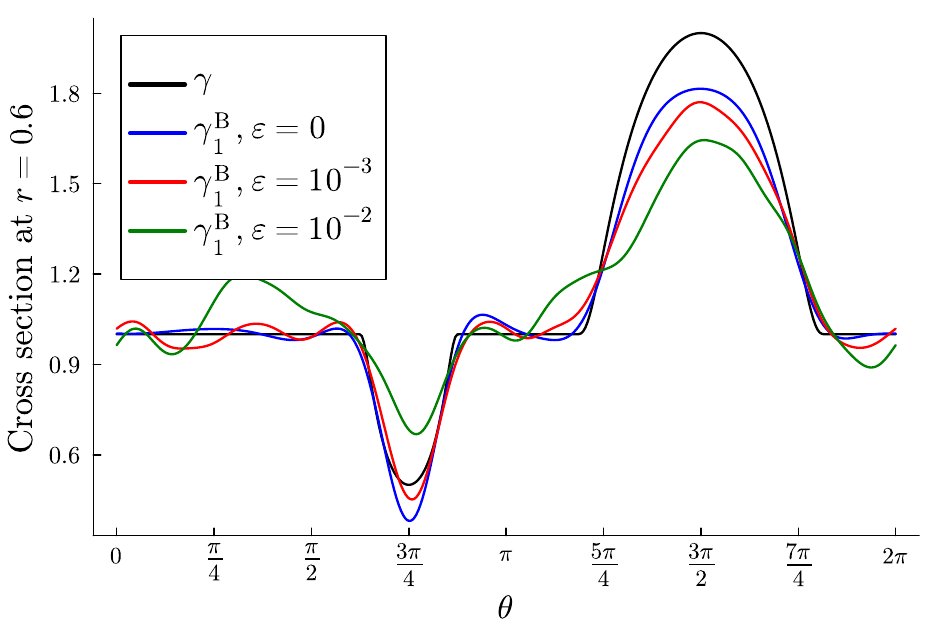}}\hspace{0.05\textwidth}
\subfloat[$\norm{\gamma-\gamma_1^\mathrm{B}}_{L^p(\ID)}$]{
\begin{tabular}[b]{cccc}
\hline
$\epsilon$ \textbackslash\, $p$ & 1 & 2 & $\infty$ \\
\hline
0 & 0.10732 & 0.10094 & 0.24640 \\
$10^{-3}$ & 0.15037 & 0.12096 & 0.31210 \\
$10^{-2}$ & 0.34721 & 0.24992 & 0.55612 \\
\hline
\vspace{1cm}
\end{tabular}
}
\caption{Reconstruction from a DtN map with Gaussian noise. $(N_r,N_\theta)=(50,50)$, $(I,L)=(50,24)$.}
\label{Fig:Noise}
\end{figure}

\clearpage

\bibliographystyle{mynumber}
\bibliography{Refs}

\end{document}